\numberwithin{equation}{section}
\newtheorem{theorem}{Theorem}[section]
\newtheorem{corollary}[theorem]{Corollary}
\newtheorem{lemma}[theorem]{Lemma}
\newtheorem{proposition}[theorem]{Proposition}
\newtheorem{remark}[theorem]{Remark}
\renewcommand{\H}{\mathbb{H}}
\newcommand{\N}{\mathbb{N}}
\newcommand{\R}{\mathbb{R}}
\newcommand{\cB}{{\ensuremath{\mathcal B}}}
\newcommand{\cL}{{\ensuremath{\mathcal L}}}
\newcommand{\cR}{{\ensuremath{\mathcal R}}}
\newcommand{\mm}{{\mbox{\boldmath$m$}}}
\newcommand{\sfb}{{\sf b}}
\newcommand{\sfd}{{\sf d}}
\newcommand{\sfg}{{\sf g}}
\newcommand{\sfh}{{\sf h}}
\newcommand{\sfn}{{\sf n}}
\newcommand{\sfs}{{\sf s}}
\newcommand{\sfH}{{\sf H}}
\newcommand{\sfM}{{\sf M}}
\newcommand{\sfN}{{\sf N}}
\newcommand{\sfP}{{\sf P}}
\newcommand{\sfR}{{\sf R}}
\newcommand{\sfX}{{\sf X}}
\newcommand{\supp}{\mathop{\rm supp}\nolimits}   
\newcommand{\restr}[1]{\lower3pt\hbox{$|_{#1}$}}
\newcommand{\Leb}[1]{{\mathscr L}^{#1}}      
\newcommand{\eps}{\varepsilon}  
\newcommand{\nchi}{{\raise.3ex\hbox{$\chi$}}}
\def\qed{\ifmmode 
  \else \leavevmode\unskip\penalty9999 \hbox{}\nobreak\hfill
  \fi               
    \qquad           \hbox{\hskip.5em $\square$
                \hskip.1em}}
\renewcommand{\mm}{\mathfrak m} 
\newcommand{\Probabilities}[1]{\mathscr P(#1)}          
\newcommand{\CD}{{\sf CD}}
\newcommand{\RCD}{{\sf RCD}}
\newcommand{\bra}[1]{\left( #1 \right)}
\newcommand{\sqa}[1]{\left[ #1 \right]}
\newcommand{\cur}[1]{\left\{ #1 \right\}}
\newcommand{\ang}[1]{\left< #1 \right>}
\newcommand{\abs}[1]{\left| #1 \right|}
\newcommand{\nor}[1]{\left\| #1 \right\|}
\newcommand{\veps}{\varepsilon}
\renewcommand{\d}{\mathsf{d}}
\newcommand{\Ric}{\mathbf{Ric}}
\renewcommand{\H}{\operatorname{H}}
\newcommand{\vecL}{\vec{\cL}}
\renewcommand{\veps}{\varepsilon}
\renewcommand{\Cap}{\operatorname{Cap}_2}
\newcommand{\Lip}{\operatorname{Lip}_{{\sf bs}}}
\newcommand{\test}[1]{{\rm Test}(#1)}
\newcommand{\testf}[1]{{\rm TestForm}(#1)}
\newcommand{\bd}{{\mathbf\Delta}}
\newcommand{\h}{{\sfh}}
\newcommand{\ddt}{\frac{\de}{\de t}}
\renewcommand{\cR}{\mathfrak{R}}
\newcommand{\de}{\mathrm{d}}
\newcommand{\bell}{\mathrm{B}}
\newcommand{\Meas}{\mathcal{M}}
\newcommand{\TV}{\mathsf{TV}}
\newcommand{\Id}{\operatorname{Id}}
\newcommand{\dist}{\mathbf{d}}
\title{Boundedness of Riesz transforms on $\RCD(K, \infty)$ spaces}
\author{Andrea Carbonaro}
\address{Andrea Carbonaro, Universit\`a degli Studi di Genova, Dipartimento di Matematica, \\ Via Dodecaneso 35 \\ I-16146 Genova, Italy}
\email{carbonaro@dima.unige.it}
\author{Luca Tamanini}
\address{Luca Tamanini, Universit\`a Cattolica del Sacro Cuore, Dipartimento di Matematica e Fisica `Niccol\`o Tartaglia' \\ Via della Garzetta 48 \\ I-25133 Brescia, Italy}
\email{luca.tamanini@unicatt.it}
\author{Dario Trevisan}
\address{Dario Trevisan, Universit\`a degli Studi di Pisa, Dipartimento di Matematica,  \\
Largo Bruno Pontecorvo 5 \\ I-56127 Pisa, Italy}
\email{dario.trevisan@unipi.it}
\subjclass[2020]{Primary 53C21. Secondary 42B20, 46E36.}
\keywords{Riesz transforms; Metric measure spaces; Bellman function.}
\date{\today}
\begin{document}

\maketitle

\begin{abstract}
For $1<p<\infty$, we prove the $L^p$-boundedness of the Riesz transform operators on metric measure spaces with Riemannian Ricci curvature bounded from below, without any restriction on their dimension. This large class of spaces include e.g.\ that of Hilbert spaces endowed with a log-concave probability measure. As a consequence, we extend the range of the validity of the Lusin-type approximation of Sobolev by Lipschitz functions, previously obtained by L.\ Ambrosio, E.\ Bru\`e and the third author in the quadratic case, i.e.\ $p=2$.  The proofs are analytic and rely on computations on an explicit Bellman function.
\end{abstract}







\setcounter{tocdepth}{1}
\tableofcontents

\section{Introduction}

Aim of this work is to prove that the Riesz transform operator is bounded on $L^p$ spaces, for every $p \in (1,\infty)$, on the class of $\RCD(K,\infty)$ metric measure spaces $(\sfX, \dist, \mm)$, with $K \in \mathbb{R}$. Writing $K_- = \max\cur{0, -K}$ and $\nor{\cdot}_p$ for the $L^p(\sfX, \mm)$ norm, and further referring to \cref{sec:notation} below for the notation of $\RCD$ spaces and related objects, our main result can be stated as follows:

\begin{theorem}\label{thm:main}
Let $(\sfX,\dist,\mm)$ be an $\RCD(K,\infty)$ space, for some $K \in \mathbb{R}$. Then, for every $p \in (1, \infty)$ and $f \in W^{1,2}(\sfX)$, it holds
\[ 
\nor{ \d f }_p \le c_p \nor{ \sqrt{ K_- - \Delta } f }_p,
\]
and
\[ \nor{ \sqrt{ K_- - \Delta } f }_p \le \sqrt{K_-} \nor{f}_p + c_p \nor{ \d f }_p,\]
where $c_p = 16 \max\cur{p, p/(p-1)}$.
\end{theorem}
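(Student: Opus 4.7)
The plan is to establish both inequalities through the heat-flow Bellman function method, in the spirit of Nazarov--Treil--Volberg and of the first author's joint work with Dragi\v{c}evi\'c on operators satisfying Bakry--\'Emery curvature bounds. Write $L := K_- - \Delta$, so that $(e^{-tL})_{t\ge 0}$ is a contractive analytic semigroup on every $L^p(\sfX,\mm)$, $p\in(1,\infty)$, and the $\BE(K,\infty)$ condition equivalent to $\RCD(K,\infty)$ supplies the pointwise gradient and curvature inequalities needed below. The subordination identity
\[
L^{-1/2} = \frac{1}{\sqrt{\pi}}\int_0^\infty t^{-1/2} e^{-tL}\, \de t
\]
represents the (modified) Riesz operator $R := \d \circ L^{-1/2}$ as an integral of the heat semigroup, which is the computational backbone of the argument.

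First, I would reformulate the two assertions dually through Gigli's Hilbert tangent module. The first inequality is equivalent to the $L^p$-bound $\nor{Rg}_p \le c_p \nor{g}_p$, itself equivalent, by duality, to the bilinear estimate
\[
\abs{\langle \d f, V\rangle} \le c_p \nor{L^{1/2} f}_p \nor{V}_{p'}
\]
for $f$ in a dense subclass of $W^{1,2}(\sfX)$ and $V$ a test vector field. The second inequality is the dual \emph{reverse} Riesz estimate; once the low-frequency part, responsible for the term $\sqrt{K_-}\nor{f}_p$, is separated off via the functional calculus of $L$, it follows from a parallel bilinear bound obtained by exchanging the roles of the scalar and vector arguments.

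The core analytic step is the construction of an explicit Bellman function $Q\colon \R\times\R_+\to\R$ depending on $p$, satisfying the size bound $0\le Q(u,v)\le C\bra{\abs{u}^p + v^{p'}}$ together with a pointwise second-order differential inequality of the form
\[
-\operatorname{Hess} Q(u,v)\bigl[(\xi,\eta),(\xi,\eta)\bigr] \ge 2\abs{\xi}\,\abs{\eta},\qquad (\xi,\eta)\in\R\times\R,
\]
modulo a curvature correction calibrated to the Bakry--\'Emery condition. Applied to $u_t := e^{-tL}g$ and $v_t := e^{-tL}V$ (the latter interpreted componentwise in the tangent module), the functional $\Phi(t):=\int_\sfX Q(u_t,\abs{v_t})\,\de\mm$ is differentiable, and integration by parts combined with $\BE(K,\infty)$ delivers
\[
-\Phi'(t)\ge 2\int_\sfX \abs{\d u_t}\,\abs{v_t}\,\de\mm,
\]
any $K$-dependent correction being reabsorbed thanks to the $K_-$-shift built into $L$. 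Integrating in $t\in(0,\infty)$, using $\Phi(\infty)=0$ and $\Phi(0)\le C(\nor{g}_p^p+\nor{V}_{p'}^{p'})$, and then applying Young's inequality, one obtains the required bilinear bound; the explicit constant $c_p = 16\max\cur{p,p/(p-1)}$ will come out of optimizing the free parameters in $Q$.

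The main obstacle is twofold. First, producing a Bellman function with the required algebraic properties \emph{and} the announced constant requires a concrete and somewhat delicate construction: a natural candidate is a modification of the Burkholder--Nazarov--Treil--Volberg function, tuned so that the Bakry--\'Emery term becomes nonnegative after the $K_-$-shift, and whose Hessian inequality is verified by a direct case analysis. Second, the heat-flow computation must be rigorously justified in the non-smooth $\RCD(K,\infty)$ setting: differentiability of $\Phi$, chain and integration-by-parts rules, and evaluation of the Hessian of $Q$ against the relevant gradients all need to be carried out within the $\Test$-calculus and the Hilbert tangent module formalism of Gigli, after approximating $g$ and $V$ by test functions and test vector fields and regularizing $Q$ near its singular set. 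Once these two pieces are in place, both assertions of \cref{thm:main} follow in parallel.
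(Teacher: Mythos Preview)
Your proposal has the right broad architecture (bilinear embedding via a Bellman functional, duality for the reverse inequality, regularization to justify calculus in the $\RCD$ setting), but the choice of the \emph{heat} semigroup in place of the \emph{Poisson} semigroup creates a genuine gap that, as written, cannot be closed.

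Concretely, with $u_t=e^{-tL}g$ and $v_t$ the (shifted Hodge) heat flow of $V$, differentiating $\Phi(t)=\int_\sfX Q(u_t,|v_t|)\,\de\mm$ once and integrating by parts yields terms of the type $\partial_{uu}^2Q\,|\d u_t|^2$, $\partial_{vv}^2Q\,|\nabla^\parallel v_t|^2$, together with curvature and cross terms $\partial^2_{uv}Q\,\langle\d u_t,\d|v_t|\rangle$; none of these is the quantity $|\d u_t|\,|v_t|$ you claim. So the inequality $-\Phi'(t)\ge 2\int_\sfX|\d u_t|\,|v_t|\,\de\mm$ is not what the computation delivers. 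Even if you could prove some bilinear bound on $\int_0^\infty\!\int_\sfX|\d u_t|\,|v_t|\,\de\mm\,\de t$, this does not control $\langle Rg,V\rangle$: your own subordination identity gives $\langle Rg,V\rangle=\pi^{-1/2}\int_0^\infty t^{-1/2}\langle\d e^{-tL}g,V\rangle\,\de t$, and the singular weight $t^{-1/2}$ has no counterpart in the heat-flow functional. A secondary imprecision is that ``$e^{-tL}V$ componentwise'' is not the right object on forms; one must use the Hodge heat flow $\h_{\H,t}$ (equivalently, the shifted $\vec{\cL}^a$), which differs from a componentwise scalar action.

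The paper resolves both issues by working with the Poisson extensions $\sfP_t^a=e^{-t\sqrt{\cL^a}}$ and $\vec\sfP_t^a=e^{-t\sqrt{\vec\cL^a}}$ and exploiting the elementary identity $\varphi(0)=\int_0^\infty\varphi''(t)\,t\,\de t$. The integral representation $\int_\sfX\langle\cR^a f,\omega\rangle\,\de\mm=4\int_0^\infty\!\int_\sfX\langle\d\sfP_t^a f,\partial_t\vec\sfP_t^a\omega\rangle\,\de\mm\,t\,\de t$ (\cref{lem:integral-rep}) matches exactly the \emph{second} time derivative of the Bellman functional, and the chain-rule inequality based on Bochner's formula (\cref{lem:convexity}) then gives $\varphi''(t)\ge 2\delta\int_\sfX|\overline\nabla f_t|\,|\overline\nabla\omega_t|\,\de\mm$, yielding the bilinear embedding \cref{lem:bilinear-emb}. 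The reverse inequality is obtained by duality via $h=(a+\cL)^{-1/2}g=\int_0^\infty\sfP_t^a g\,\de t$ and the bound $\|h\|_q\le a^{-1/2}\|g\|_q$, letting $a\downarrow0$ when $K\ge0$. If you rewrite your proposal with Poisson flows, second derivatives of $\Phi$, and the weight $t\,\de t$, it will align with the paper's argument; as it stands, the heat-flow version does not produce the Riesz pairing.
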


As a straightforward application of our main result in combination with the arguments in \cite{ambrosio_lusin-type_2018}, we obtain the following quantitative Lusin approximation of Sobolev functions by Lipschitz functions on $\RCD(K,\infty)$ spaces (without \cref{thm:main}, this was previously limited to the case $p=2$).

\begin{corollary}\label{cor:lusin}
Let $(\sfX,\dist,\mm)$ be an $\RCD(K,\infty)$ space, with $K \in \mathbb{R}$, and let  $p \in (1, \infty)$.  Then, there exists $c = c(p, K)< \infty$ such that the following holds. For every  $f\in W^{1,p}(\sfX)$, there exists $g \in L^p(\sfX)$ with
\[ 
\nor{g}_p \le c \bra{ \nor{ f}_{p} + \nor{ \d f}_{p} } 
\]
and an $\mm$-negligible set $N\subset \sfX$ such that, for every $x,y\in \sfX \setminus N$ with $\dist(x,y)\leq 1/ K_-^2$, one has
\begin{equation}\label{eq:lusin-approx}
|f(x)-f(y)|\leq  \dist(x,y) (g(x)+g(y)).
\end{equation}
\end{corollary}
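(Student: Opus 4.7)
The plan is to combine the pointwise heat-semigroup construction of \cite{ambrosio_lusin-type_2018}, previously restricted to $p=2$, with the second inequality of \cref{thm:main} to cover all $p\in(1,\infty)$. The first step is to import the output of the quadratic-case argument. For $f$ in a dense test class, setting $u:=\sqrt{K_--\Delta}\,f$ and letting $(P_t)_{t\geq 0}$ denote the heat semigroup, the decomposition
\[ f(x) - f(y) = \bigl(f(x) - P_t f(x)\bigr) + \bigl(P_t f(x) - P_t f(y)\bigr) + \bigl(P_t f(y) - f(y)\bigr),\]
combined with the Bakry--Ledoux gradient estimate $|\de P_t f|\leq e^{-Kt}P_t|\de f|$ for the middle difference and Stein-type semigroup identities (such as $f-P_tf = \int_0^t \Delta P_s f\,\de s$ rewritten via the spectral factorisation $\Delta P_s = -\sqrt{K_--\Delta}\,P_s\,\sqrt{K_--\Delta}+K_-P_s$) for the outer two, yields, upon optimising in $t\sim \dist(x,y)^2$ over the range $\dist(x,y)\leq 1/K_-^2$, the pointwise inequality \eqref{eq:lusin-approx} outside an $\mm$-negligible set $N$. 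The resulting $g$ has the schematic form $g = c\bigl(\sfM(u) + \sqrt{K_-}\,\sfM(f)\bigr)$, where $\sfM h(x) := \sup_{t>0} P_t|h|(x)$ is the heat-semigroup maximal operator. This part of the argument is purely metric--measure and $p$-independent, and I would import it verbatim from \cite{ambrosio_lusin-type_2018}.

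The second step is to bound $\|g\|_p$. Since $(P_t)$ is a symmetric Markov semigroup on $L^2(\mm)$ which extends to an $L^p$-contraction for every $p\in[1,\infty]$, the Hopf--Dunford--Schwartz (Stein) maximal theorem gives $\|\sfM h\|_p \leq c_p\|h\|_p$ for $p\in(1,\infty)$, whence $\|g\|_p \leq c_p\bigl(\|u\|_p + \sqrt{K_-}\,\|f\|_p\bigr)$. The second inequality of \cref{thm:main}, namely $\|u\|_p \leq \sqrt{K_-}\,\|f\|_p + c_p\,\|\de f\|_p$, then closes the estimate to $\|g\|_p \leq c(p,K)\bigl(\|f\|_p + \|\de f\|_p\bigr)$, as required.

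The final step is the extension to general $f\in W^{1,p}(\sfX)$ by density: choose $(f_n)$ in the test class with $f_n\to f$ in $W^{1,p}$, construct the associated $(g_n)$ as above, and use the uniform $L^p$-bound together with Fatou's lemma to extract $g\in L^p(\sfX)$ and a common $\mm$-negligible set $N$ on which \eqref{eq:lusin-approx} persists. The only genuinely non-routine input is \cref{thm:main}: it supplies the bridge between the spectral object $\sqrt{K_--\Delta}\,f$ natural to the semigroup construction and the metric object $\de f$ appearing in the norm, a bridge that for $p=2$ reduced to the trivial integration-by-parts identity $\|u\|_2^2 = K_-\|f\|_2^2 + \|\de f\|_2^2$. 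I expect the main (essentially organisational) obstacle to be a careful inspection of how the constants and the admissible scale $\dist(x,y)\leq 1/K_-^2$ propagate through the three steps, so as to produce a single $c=c(p,K)$ valid for all $f\in W^{1,p}(\sfX)$.
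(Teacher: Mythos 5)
Your overall architecture does match the paper's: import the pointwise semigroup construction of \cite{ambrosio_lusin-type_2018}, bound $g$ in $L^p$ by Stein's maximal theorem for symmetric Markov semigroups, use the second inequality of \cref{thm:main} to convert $\|\sqrt{K_--\Delta}f\|_p$ into $\|f\|_p+\|\d f\|_p$, and finish by density for $f\in W^{1,p}(\sfX)$. Your spectral treatment of the outer terms $f-\sfh_tf$ is also the right idea and is needed for $K<0$, since the quoted theorem of \cite{ambrosio_lusin-type_2018} produces $\sqrt{-\Delta}f$ rather than $\sqrt{K_--\Delta}f$; the paper redoes exactly this step in \eqref{eq:fractional}, so ``importing verbatim'' is not quite accurate but the fix you sketch is the intended one.

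The genuine gap is in the middle difference. In an $\RCD(K,\infty)$ space the Bakry--Ledoux bound $|\d\,\sfh_tf|\le e^{-Kt}\sfh_t|\d f|$ is only an $\mm$-a.e.\ inequality between minimal weak upper gradients, and it does not upgrade to a two-point estimate of the form $|\sfh_tf(x)-\sfh_tf(y)|\le \dist(x,y)\,(\sfh_t|\d f|(x)+\sfh_t|\d f|(y))$ outside a negligible set: that upgrade is the telescoping-over-balls argument which requires local doubling and a Poincar\'e inequality, both unavailable here because $\RCD(K,\infty)$ spaces may be infinite dimensional (e.g.\ Wiener space). What \cite{ambrosio_lusin-type_2018} actually uses, and what this paper records as \eqref{eq:wang}, is Wang's dimension-free Harnack-type gradient estimate, whose right-hand side is the one-point quantity $(\sfh_t|\d f|^\alpha(x_0))^{1/\alpha}$ multiplied by $\exp(\dist(x_0,x_1)^2/(2\sigma_K(t)(\alpha-1)))$; this constant blows up as $\alpha\downarrow 1$, so one is forced to take $\alpha\in(1,2]$ and the function $g$ must contain $\sup_{s>0}(\sfh_s|\d f|^\alpha)^{1/\alpha}$, as in \eqref{eq:g-abt} and \eqref{eq:g-abt2}. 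Note in particular that your schematic $g=c(\sfM(u)+\sqrt{K_-}\,\sfM(f))$ contains no gradient term at all, so it cannot dominate the middle difference for any choice of $t$. Correspondingly, in the $L^p$ step the maximal theorem has to be applied to $|\d f|^\alpha$ in $L^{p/\alpha}$, which forces the choice $\alpha\in(1,\min\{2,p\})$ --- a small but genuine constraint that your sketch hides by implicitly working with $\alpha=1$. With the Wang estimate in place of Bakry--Ledoux and $\alpha$ chosen below $p$, your plan becomes the paper's proof.
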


\subsection{Related literature}

The literature studying bounds on Riesz transform operators under geometric conditions is extensive, hence we focus our discussion on the works most relevant to the setting considered in this paper. In the seminal work \cite{bakry_transformation_1985}, D.\ Bakry investigated the $L^p$-boundedness of Riesz transform operators for general symmetric Markovian semigroups, not necessarily of diffusion type, satisfying an abstract curvature condition formulated via the so-called $\Gamma_2$ operator. He subsequently specialized his results to smooth Riemannian manifolds \cite{bakry_etude_1987}, where a lower bound on $\Gamma_2$ is seen to be  equivalent to a uniform lower bound of the  Ricci curvature. Such curvature lower bounds do not depend on the dimension of the manifold and in fact Bakry's work was connected to P.\ A.\ Meyer's original proof of Riesz inequalities on Wiener space \cite{meyer_transformations_1984}, using a probabilistic approach yielding representation formulae in terms of stochastic processes and bounds on square functions of associated martingales. Several extensions and improvements of the method have been proposed in the literature, e.g.\ to higher order Riesz transforms on manifolds or Wiener space \cite{yoshida_sobolev_1992, shigekawa_sobolev_1992}. The explicit value of the $L^p$-norm of the Riesz transform operator in the Euclidean setting was first computed in \cite{iwaniec_riesz_1995}: this raised the question of improved estimates, in particular with respect to the dependence upon $p \in (1,\infty)$, in the setting of weighted manifolds \cite{li_martingale_2008, banuelos_martingale_2013}. Eventually, this line of research lead to asymptotically sharp estimates, in the work  \cite{carbonaro_bellman_2013} by the first author and O.\ Dragi{\v c}evi{\'c}, with a completely analytic proof, in the philosophy of \cite{nazarov_hunt_1996, dragicevic_linear_2012}.

\vspace{1em}

The class of $\RCD(K,\infty)$ spaces was first introduced in \cite{ambrosio_metric_2014} to rule out Finsler geometries that are non-Riemannian from the class of $\CD(K, \infty)$ metric measures spaces. The latter class, introduced in \cite{Lott-Villani09, Sturm06I, Sturm06II}, consists of spaces where the entropy functional is $K$-convex along Wasserstein $W_2$ geodesics: this variational condition on the space reduces, on weighted Riemanniann manifolds, to a lower bound $\Gamma_2 \ge K$. Looking back at \cite{bakry_transformation_1985}, it is not a completely unexpected result that a suitably defined Riesz transform on $L^p$ spaces must be bounded when the underlying geometry is $\RCD(K,\infty)$. 

However, in such a generality, such problem seemed to be open until very recently. Indeed, Riesz transform bounds on the smaller class of $\RCD(K,N)$ spaces, i.e.\ having a finite dimension $N \in [1,\infty)$, have been investigated in \cite{jiang_heat_2016}, relying on heat kernel bounds or Poincaré and doubling properties, but these are not available in the $N=\infty$ case.  
A first step towards solving the general $\RCD(K, \infty)$ case can be ascribed to the preprint \cite{li2019littlewood}, where  boundedness of the ``vertical'' square functions for the heat and Poisson semigroups are established on $\RCD(K,\infty)$ spaces, a key tool in Bakry's approach to the problem. In the more recent   work \cite{esaki2023littlewood}, a similar boundedness result is established in the much wider class of ``tamed'' spaces \cite{erbar2022tamed}, namely Dirichlet spaces with a \emph{distributional} synthetic Ricci curvature lower bound. Finally, in the preprint \cite{esaki2023riesz}, which appeared on the arXiv just few days before our work, such square function bounds have been applied to a well-known integral representation for the Riesz transform (see \cref{lem:integral-rep}), eventually leading to the first proof of Riesz transform bounds on $\RCD(K, \infty)$ spaces. This approach is close in spirit to that of Bakry and, in particular, it should be affected by the same issues concerning non-sharp asymptotic dependence with respect to $p$ of the bounds, in the limits $p\to 1^+$ and $p \to \infty$.

\subsection{Comments on our proof technique}

Our approach  does not make use of square functions instead and is completely independent of \cite{li2019littlewood,esaki2023littlewood,esaki2023riesz}. Indeed, it is closer to that of the first author and O.\ Dragicevic in the already mentioned \cite{carbonaro_bellman_2013}, i.e.\ by making use of suitable Bellman functions. This technique originates in stochastic analysis in the works of D.L.\ Burkholder \cite{burkholder_sharp_nodate} and was popularized in harmonic analysis by F.L.\ Nazarov, S.R.\ Treil and A.\ Volberg (see \cite{nazarov_hunt_1996}). In particular, it leads  to explicit constants and asymptotically sharp dependence with respect to the exponent $p$ of the operator norm bounds.

Compared with \cite{carbonaro_bellman_2013}, one is tempted to directly employ the same Bellman functions, with the appropriate modifications. However, in the $\RCD$ setting, the need of regularization (e.g.\ to rigorously apply the chain rule and perform integration by parts)  has to be addressed in a different way, see \cref{sec:bell} for details. When compared with \cite{carbonaro_bellman_2013}, another difference arises in order to deal with the case of $\mm(\sfX)=\infty$, for which  we introduce suitable weights to reduce ourselves to the case of finite measure.




%
%

\subsection{Further questions}

Although \cref{thm:main} may be established using different approaches, i.e.\ by using Littlewood-Paley theory as in  \cite{esaki2023riesz} or perhaps by developing enough stochastic calculus tools to reproduce Bakry's original proof, to our knowledge our work introduces for the first time Bellman functions techniques in the $\RCD$ setting, and may pave the way for further investigations. In particular, we list the following points.
\begin{enumerate}
\item Our proof already yields sharp asymptotic dependence in the bounds as $p\to 1^+$ and $p \to \infty$, but of course we do not conjecture our bounds to be sharp for fixed $p$. We expect however that by replacing our Bellman function with a more carefully crafted one, the argument may  lead to even better constants. In fact, it is well known that the use of Bellman functions \cite{burkholder_sharp_nodate, oskekowski2012sharp} leads to optimal constants for several inequalities for martingales and singular integrals.
\item Our work shows that the scope and flexibility of Bellman function techniques apply well beyond the case of Riemannian manifolds. Hence, it could be worth investigating further settings, even when boundedness of Riesz transforms have been addressed by other tools, such as discrete and non-commutative ones \cite{junge2010noncommutative, banuelos_martingale_2013, junge2018noncommutative}.
\item It is known that the boundedness of Riesz transform has relevant applications in elliptic or parabolic PDEs and for the $L^p$-Hodge decomposition theory: applications in this directions have not yet been investigated in the $\RCD$ setting.
\item In \cite[Theorem 5.2]{ambrosio_lusin-type_2018}, it is given an application of the Lusin approximation theorem to Sobolev vector fields in Gaussian Hilbert spaces, in order to establish quantitative bounds for the associated regular Lagrangian flows. Both our \cref{thm:main} and \cref{cor:lusin} are stated for real-valued functions, but it is a well-known principle in harmonic analysis that inequalities for singular integrals hold also for maps
with values in Hilbert spaces \cite[§II.5, Theorem 5]{stein_topics_1970} where the singular integral operator is applied componentwise. We thus conjecture that, with natural modifications in the definitions, but essentially the same proofs, our results extend also to Hilbert-valued maps, and following the proof of \cite[Theorem 5.2]{ambrosio_lusin-type_2018} one may establish quantitative bounds for regular Lagrangian flows in Hilbert spaces, endowed with any log-concave probability measure.
\end{enumerate}

\subsection{Structure of the paper} The exposition is organized as follows. In \cref{sec:notation} we recall the most relevant concepts and results concerning calculus on $\RCD$ spaces (\cref{sec:rcd}), Poisson semigroups (\cref{sec:poisson}) and Bellman functions (\cref{sec:bell}). \cref{sec:main} is devoted to the proof of \cref{thm:main}, while \cref{sec:lusin} discusses how \cref{cor:lusin} follows from \cref{thm:main} from the arguments of \cite{ambrosio_lusin-type_2018}. We end the paper with \cref{sec:examples} where we provide  examples of applications of our result to explicit differential operators, e.g.\  of Bessel (\cref{sec:bessel}) and Laguerre (\cref{sec:laguerre}) type, with a throughout derivation of the fact that they fit in the $\RCD$ setting. To give an explicit infinite dimensional class of examples, we finally specialize \cref{thm:main} in the case of Hilbert spaces endowed with a log-concave probability measure.

\section{Notation and basic facts}\label{sec:notation}

\subsection{Self-adjoint operators and spectral calculus} Let $(H,\langle\cdot,\cdot\rangle)$ be a (separable) Hilbert space. Given a (possibly unbounded) self-adjoint operator $A: D(A) \subseteq H \to H$, i.e.\ densely defined, closed and symmetric, the spectral theorem  (see e.g.\ \cite[Section 10.1]{hall2013quantum}) provides a projection-valued measure $(E_\lambda)_{\lambda \in \mathbb{R}}$ such that
\[ 
A v = \int_{-\infty}^\infty \lambda \de E_\lambda(v) \qquad \text{for every $v\in D(A)$.}
\]
This in turn allows to define Borel functional calculus, yielding e.g.\ the definition of operators $f(A)$ for $f: \mathbb{R} \to \mathbb{R}$ Borel, with domain
\[ 
D(f(A)) = \cur{ v \in H \, : \, \int_{-\infty}^\infty |f(\lambda)|^2 \de \langle E_{\lambda} (g), g \rangle < \infty}.
\]

We also write
\[ 
\sfR(A) = \cur{ Av\, : \, v \in D(A) }, \qquad \operatorname{Ker}(A) = \cur{v \in D(A): Av = 0}
\]
respectively for the image and the kernel of $A$, and recall that
\[ \overline{\sfR(A)} = \operatorname{Ker}(A)^{\perp}, \]
i.e., the closure in $H$ of the image is the orthogonal (in $H$) to the kernel.

\subsection{$\RCD$ spaces}\label{sec:rcd} 
We refer to \cite{gigli_nonsmooth_2018} for a detailed introduction to calculus on metric measure spaces and specifically for those satisfying a synthetic Ricci curvature lower bound and enjoying a Riemannian-like behaviour, i.e.\ $\RCD$ spaces. Here we only recall basic definitions and properties that will be used below as well as more specific bibliographical references.

\subsubsection*{Sobolev spaces}
We let throughout $(\sfX,\dist)$ be a complete and separable length metric space endowed with a non-negative Radon measure $\mm$ with $\supp(\mm) = \sfX$ and which is finite on bounded sets. By $\Lip(\sfX)$ we shall denote the space of Lipschitz functions $f : \sfX \to \R$ with bounded support and by $\Meas(\sfX)$ the set of signed Radon measures with finite total variation on bounded sets of $\sfX$,  endowed with the total variation norm $\nor{\cdot}_{\TV}$. For the definition of the {\bf Sobolev class} $S^2(\sfX)$ (for brevity, we omit to specify $\dist$ and $\mm$ in what follows) and of {\bf minimal weak upper gradient} $|D f|$ we adopt the point of view from \cite{ambrosio2013density} which shows equivalence between several approaches, such  as \cite{cheeger1999differentiability} and \cite{shanmugalingam2000newtonian}. The Sobolev space $W^{1,2}(\sfX)$ is then defined as $L^2(\sfX) \cap S^2(\sfX)$ and when endowed with the norm $\|f\|_{W^{1,2}}^2:=\|f\|_{L^2}^2+\||Df|\|_{L^2}^2$, $W^{1,2}(\sfX)$ is a Banach space. We also define $W^{1,p}(\sfX)$, $p \in (1,\infty)$, as the closure of $W^{1,2}(\sfX)$ with respect to the norm $\|f\|_{W^{1,p}}^p:=\|f\|_{L^p}^p+\||Df|\|_{L^p}^p$. The {\bf Cheeger energy} is the convex and lower semicontinuous functional $E:L^2(\sfX)\to[0,\infty]$ given by
\[
E(f):=\left\{\begin{array}{ll}
\displaystyle{\frac12\int_{\sfX}|D f|^2\,\de\mm}&\qquad \text{for }f\in W^{1,2}(\sfX), \\
+\infty&\qquad\text{otherwise}.
\end{array}\right.
\]
Starting from these definitions, one can define a concept of Sobolev capacity $\Cap$ as in the Euclidean setting. In particular, by density of $\Lip(\sfX)$ in $W^{1,2}(\sfX)$ (see \cite{ambrosio2013density}), every Sobolev function $f \in W^{1,2}(\sfX)$ admits a $\Cap$-precise representative $\tilde{f}$ (in the $\mm$-a.e.\ equivalence class) obtained by approximating $f$ with a sequence of Lipschitz functions converging in $W^{1,2}(\sfX)$ (possibly extracting a subsequence).

\subsubsection*{Infinitesimally Hilbertian spaces}
From now on we will always assume that  $(\sfX,\dist,\mm)$ is {\bf infinitesimally Hilbertian}, i.e., that $W^{1,2}(\sfX)$ is Hilbert.  In this case $E$ is a Dirichlet form and we denote by $\Delta$ its infinitesimal generator, which is a closed self-adjoint operator on $L^2(\sfX)$, with domain $D(\Delta)\subset W^{1,2}(\sfX)$. The {\bf heat flow} in this setting is the linear Markovian semigroup $\h_t = \exp(t\Delta)$ associated to $E$, which is also equivalent to the $W_2$-gradient flow of the relative entropy ${\rm Ent}_\mm$: for any $f\in L^2(\sfX)$ the curve $t \mapsto \h_tf \in L^2(\sfX)$ is continuous on $[0,\infty)$, locally absolutely continuous on $(0,\infty)$ and the only solution of
\[
\ddt \h_tf = \Delta\h_tf \qquad \h_tf \to f\text{ as }t \downarrow 0.
\]

\subsubsection*{Cotangent and Tangent modules}
Using the theory of $L^\infty$-modules, in \cite{gigli_nonsmooth_2018} it is suitably defined the space of square integrable sections of the cotangent bundle $T^*\sfX$, called {\bf cotangent module} $L^2(T^*\sfX)$ (and more generally $L^p(T^* \sfX)$, $p \in [1, \infty]$). The {\bf differential} is then a linear map $\d$ from $S^2(\sfX)$ with values in $L^2(T^*\sfX)$, $f \mapsto \d f$, such that $\mm$-a.e.\ $|\d f| = |Df|$, the former being the pointwise norm in the module. Due to the infinitesimal Hilbertianity assumption, the cotangent module is a Hilbert space, i.e.\ the pointwise norm is $\mm$-a.e.\ induced by a scalar product $\ang{\cdot, \cdot}$,  and is canonically isomorphic to its dual, the {\bf tangent module} $L^2(T\sfX)$, both in the sense of Hilbert spaces and of modules. We use the musical notation for such isomorphism $\sharp: L^2(T^*\sfX) \to L^2(T \sfX)$ (its inverse $\flat$ will not be used). For example, in \cite{gigli_nonsmooth_2018} the gradient $\nabla f$ is defined as $(\d f)^\sharp$, but we avoid this notation to work only with differentials and covariant derivatives, as defined below. Finally, let us point out that, thanks to the structure theorem \cite[Theorem 1.4.11]{gigli_nonsmooth_2018}, we can always identify elements in the tangent or cotangent module as Borel maps with values in a Hilbert space (possibly infinite dimensional): this allows us to generalize many measure-theoretical results (such as duality and interpolation theorems) to this setting. As an example, we recall the duality:
\begin{equation}\label{eq:duality-general-forms} 
\| \omega \|_{L^p(T^*\sfX)}  = \sup_{\substack{\eta \in L^2 \cap L^q(T^*\sfX) \\ \|\eta\|_q \leq 1}}\int_\sfX\langle \omega,\eta\rangle\,\de\mm \qquad \text{for conjugate exponents $\frac 1 p + \frac 1 q= 1$.}
\end{equation}
As regards the properties and the calculus rules satisfied by the differential operators defined so far, let us recall the {\bf locality} of the differential, i.e.\ $\d f = \d g$ $\mm$-a.e.\ on $\{f=g\}$ for all $f,g \in S^2(\sfX)$, as well as the chain and Leibniz rules
\begin{align*}
\d(\varphi\circ f) & = (\varphi'\circ f) \d f && \forall f \in S^2(\sfX),\ \varphi:\R\to \R\ \text{Lipschitz}\\
\d(fg) &= g\,\d f + f\,\d g && \forall f,g \in L^\infty \cap S^2(\sfX)
\end{align*}
where it is part of the properties the fact that $\varphi\circ f,fg\in S^2(\sfX)$ for $\varphi,f,g$ as above. Moreover, $\test\sfX$, $D(\Delta)$ and Lipschitz functions with bounded support are dense classes of functions in $W^{1,2}(\sfX)$. We shall use these facts extensively without further mention and address the reader to \cite{ambrosio_metric_2014} and \cite{gigli_nonsmooth_2018} for their proof.

We occasionally need to consider the larger class $H^{1,1}(\sfX)$ of functions $f \in L^1(\sfX)$ that can be approximated in $L^1$ by a sequence $(f_n)_n \subseteq W^{1,2}\cap L^1(\sfX)$ with $\d f_n \to g \in L^1(T^*\sfX)$. We let then $g = \d f$, since the limit is unique and coincides with the differential if $f \in W^{1,2}(\sfX)$. In particular we have, for $f \in W^{1,2}(\sfX)$, $|f|^2 \in H^{1,1}(\sfX)$ and $\d |f|^2 = 2 f \d f$.

\subsubsection*{Entropy and Ricci bounds}

The curvature-dimension condition $\CD(K,\infty)$, independently proposed by Sturm \cite{Sturm06I, Sturm06II} and by Lott-Villani \cite{Lott-Villani09}, comes then into play as $K$-displacement convexity of the relative entropy functional ${\rm Ent}_\mm$ along $W_2$-geodesics, namely if for every $\mu_0,\mu_1 \in D({\rm Ent}_\mm)$ there exists a $W_2$-geodesic $(\mu_t)_{t \in [0,1]} \subset \mathscr{P}_2(\sfX)$ connecting them which satisfies
\[
{\rm Ent}_\mm(\mu_t) \leq (1-t){\rm Ent}_\mm(\mu_0) + t{\rm Ent}_\mm(\mu_1) - \frac{K}{2}t(1-t)W_2^2(\mu_0,\mu_1), \qquad \forall t \in [0,1],
\]
as after \cite{SturmVonRenesse05} in a smooth Riemannian manifold this is equivalent to ${\rm Ric} \geq K$. Recall that ${\rm Ent}_\mm : \mathscr{P}_2(\sfX) \to [-\infty,+\infty]$ is defined as
\[
{\rm Ent}_\mm(\mu) := \left\{\begin{array}{ll}
\displaystyle{\lim_{\eps \downarrow 0}\int_{\rho > \eps}\rho\log\rho\,\de\mm} & \qquad \text{if } \mu = \rho\mm \\
+\infty&\qquad\text{otherwise}
\end{array}\right.
\]
and $D({\rm Ent}_\mm) := \{\mu \in \mathscr{P}_2(\sfX) \,:\, {\rm Ent}_\mm(\mu) \in [-\infty,+\infty)\}$.

The triple $(\sfX,\dist,\mm)$ is an $\RCD(K,\infty)$ space if it is infinitesimally Hilbertian and satisfies the $\CD(K,\infty)$ condition.

\begin{remark}[$\h_t$ is conservative]\label{rem:conservative}
If $(\sfX, \dist, \mm)$ is an $\RCD(K, \infty)$ space, then the heat flow is conservative, i.e.\ it maps the set of probability measures into itself or, by duality, the identity $\h_t 1 = 1$ holds. By \cite[Theorem 1.6.6]{fukushima2010dirichlet}, this is also equivalent to the existence of a sequence $(g_n)_{n \in \N} \subseteq L^1 \cap W^{1,2}(\sfX)$ such that $0\le g_n \le 1$, $g_n \uparrow 1$ and
\[
\lim_{n \to \infty}  | \d  g_n | = 0 \quad \text{weakly in $L^2(\sfX)$.}
\]
The construction is rather simple: let $g_n$ solve $(1-\Delta)g_n = f_n$ where $f_n \in L^1\cap L^2(\sfX)$, $0 \le f_n \le 1$ and let the sequence $(f_n)_{n \in \N}$ pointwise grow to 1. 
\end{remark}

\subsubsection*{Test forms, covariant derivative and exterior differential}

In an $\RCD(K,\infty)$ space, a finer differential structure is at disposal. Indeed one can single out sufficiently rich classes of test objects, namely the vector spaces of {\bf test functions}  and {\bf test $1$-forms}, respectively defined in \cite{gigli_nonsmooth_2018} as
\[
\begin{split}
& \test\sfX := \Big\{ f \in D(\Delta) \cap L^{\infty}(\sfX) \ :\ |\d f| \in L^{\infty}(\sfX),\ \Delta f \in W^{1,2}(\sfX) \Big\}, \\
& \testf\sfX := \Big\{ \sum_{i=1}^n g_i\d f_i \,:\, n \in \N,\, f_i,g_i \in \test\sfX \, \forall i=1,...,n \Big\},
\end{split}
\]
Then, via a distributional approach, it is possible to provide the notion of {\bf covariant derivative} $\nabla$ on $\testf\sfX$,
\[ 
\nabla: \testf\sfX \subseteq L^2( T^* \sfX) \to L^2( (T^*)^{\otimes 2} \sfX), \quad  \nabla \omega (\cdot , X) = \nabla_X \omega,
\]
and the {\bf exterior differential} $\d$ and its dual {\bf codifferential} $\d^*$,
\[ 
\d: \testf\sfX \to L^2(\Lambda^2 T^* \sfX ), \quad  \quad \d^* :  \testf\sfX \to L^2(\mm)
\]
with the associated Sobolev spaces $W^{1,2}_{C}(T^*\sfX)$,  $W^{1,2}_{\H}(T^*\sfX)$, and norms
\[
\begin{split} 
\|\omega\|^2_{W^{1,2}_{C}} & := \|\omega\|^2_{L^2(T^*\sfX)} + \|\nabla \omega\|^2_{L^2((T^*)^{\otimes 2}\sfX)}, \\
\|\omega\|^2_{W^{1,2}_{\H}} & := \|\omega\|^2_{L^2(T^*\sfX)} + \|\d\omega\|^2_{L^2(\Lambda^2 T^*\sfX)} + \|\d^*\omega\|^2_{L^2}
\end{split}
\]
(see \cite{gigli_nonsmooth_2018} for precise definitions, also of the spaces $L^2(T^{\otimes 2} \sfX)$, $L^2(\Lambda^2 T^*\sfX)$ and their ``Hilbert-Schmidt'' norms). It is an open problem to determine whether $\testf\sfX$ is dense in such Sobolev spaces: for this reason, in what follows we denote by $H^{1,2}_{C}(T^*\sfX)$, $H^{1,2}_{\H}(T^*\sfX)$ the respective closures. Usual calculus rules hold in these spaces (by density of test forms): we will use e.g.\ the chain rule. For $\omega, \omega' \in H^{1,2}_{C}(T^*\sfX)$, one has $\ang{\omega, \omega'} \in H^{1,1}(\sfX)$, with
\begin{equation}\label{eq:chain-rule-forms}  
\d \ang{\omega, \omega'}  = \nabla \omega ((\omega')^\sharp, \cdot) + \nabla \omega'( \omega^\sharp, \cdot),
\end{equation}

\subsubsection*{Heat flow on forms}
The augmented Hodge energy functional on 1-forms $E_{\H} : L^2(T^*\sfX) \to [0,\infty]$, defined as
\[
E_{\H}(\omega) := \begin{cases}
\displaystyle{\frac12\int_{\sfX}\Big(|\d\omega|^2 + |\d^*\omega|^2\Big)\de\mm} & \qquad \text{for }\omega \in H^{1,2}_H(T^*\sfX),\\
+\infty&\qquad\text{otherwise,}
\end{cases}
\] 
is a closed quadratic form on $L^2(T^* \sfX)$. Its sub-differential, called Hodge Laplacian $\Delta_\sfH$, is then represented as $\Delta_\sfH = \d\d^* + \d^*\d$ on $\testf{\sfX}$,  and the associated linear symmetric semigroup on $L^2(T^*\sfX)$ is called {\bf heat flow on $1$-forms}  $(\h_{{\H},t})$. For any $\omega \in L^2(T^*\sfX)$ the curve $t \mapsto \h_{{\H},t}\omega \in L^2(T^*\sfX)$ is continuous on $[0,\infty)$, locally absolutely continuous on $(0,\infty)$ and the only solution of
\[
\ddt \h_{{\H},t}\omega = -\Delta_{\H}\h_{{\H},t}\omega \qquad \h_{{\H},t}\omega \to \omega\text{ as }t \downarrow 0.
\]
For any $f \in W^{1,2}(\sfX,\sfd,\mm)$ and $\omega \in H^{1,2}_{\H}(T^*\sfX)$ it holds, for every $t \ge 0$,
\begin{equation}\label{eq:commutation-heat}
\h_{{\H},t}(\d f) = \d\h_t f, \quad \text{and by duality} \quad  \d^*\h_{{\H},t}\omega = \h_t(\d^*\omega).
\end{equation}

\subsubsection*{Bakry-Em\'ery inequalities} 

An equivalent statement of the curvature assumption is provided by the domination inequality between $(\h_{\H, t})_{t \geq 0}$ and the Markov semigroup $(\h_t)_{t \geq 0}$:
\begin{equation}\label{eq:domination-pointwise}
|\h_{{\H},t}\omega|^2 \leq e^{-2  Kt } \h_t|\omega|^2,\quad \mm{\rm -a.e.} \qquad \forall t \geq 0
\end{equation}
holds for all $\omega \in L^2(T^*\sfX)$  \cite[Proposition 3.6.10]{gigli_nonsmooth_2018}. By Jensen inequality, we obtain that for $p \in [2, \infty)$, $\omega \in L^2\cap L^p(T^* \sfX)$,
\[
\|\h_{{\H},t}\omega\|^p_{L^p(T^*\sfX)} \leq e^{-Kt} \|\h_t|\omega|^2\|^{p/2}_{L^{p/2}(\sfX)} \leq e^{-Kt}  \||\omega|^2\|^{p/2}_{L^{p/2}(\sfX)} = e^{-Kt} \|\omega\|^p_{L^p(T^*\sfX)}.
\]
For $p = \infty$ the same bound follows directly from \eqref{eq:domination-pointwise} since $\h_t |\omega|^2 \leq \|\omega\|^2_\infty$, while for $p \in [1,2)$ we argue by duality using \eqref{eq:duality-general-forms},
 \[
\begin{split}
\|\h_{{\H},t}\omega\|_{L^p(T^*\sfX)} & = \sup_{\substack{\eta \in L^2 \cap L^q(T^*\sfX) \\ \|\eta\|_q \leq 1}}\int_\sfX\langle\h_{{\H},t}\omega,\eta\rangle\,\de\mm = \sup_{\substack{\eta \in L^2 \cap L^q(T^*\sfX) \\ \|\eta\|_q \leq 1}}\int_\sfX\langle\omega,\h_{{\H},t}\eta\rangle\,\de\mm \\
& \leq \sup_{\substack{\eta \in L^2 \cap L^q(T^*\sfX) \\ \|\eta\|_q \leq 1}}\|\omega\|_{L^p(T^*\sfX)}\|\h_{{\H},t}\eta\|_{L^q(T^*\sfX)} \leq e^{-Kt} \|\omega\|_{L^p(T^*\sfX)}.
\end{split}
\]
Moreover, one can closely follow the arguments in \cite[Theorem III.1]{stein_topics_1970}, recalling e.g.\ that by the structure theorem  \cite[Theorem 1.4.11]{gigli_nonsmooth_2018} we can identify elements in $L^2(T^*\sfX)$ as Borel maps with values in a Hilbert space. Therefore, for any $p \in (1, \infty)$ and $\omega \in L^2 \cap L^p( T^* \sfX)$ the map $[0,\infty) \ni t \mapsto \h_{\H, t}$ takes values in $L^2 \cap L^p(T^* \sfX)$ and can be analytically extended for all complex $t \in \mathbb{C}$ such that
\[
|{\rm arg}(t)| \leq \frac{\pi}{2}\Big(1 - \Big|\frac{2}{p} - 1\Big|\Big).
\]
In particular, smoothness of $(0, \infty) \ni t \mapsto \h_{{\H},t} \omega$ can and will be always assumed.

\subsubsection*{Bochner formula}

A further relevant definition for our purposes is that of {\bf measure-valued Laplacian} \cite[Definition 3.1.2]{gigli_nonsmooth_2018}. Given a function $f\in W^{1,2}(\sfX)$ and a Radon signed measure $\mu \in \Meas(\sfX)$ with finite total variation on bounded sets, we write $f \in D(\bd)$ if
\begin{equation}\label{eq:ibp-def-meas-lap}
\int_\sfX g\,\de\mu = -\int_\sfX \langle\d g,\d f\rangle\,\de\mm,\qquad\quad \text{for every $g \in \Lip(\sfX)$.}
\end{equation}
Clearly, $\mu$ is unique if it exists,  and we denote $\mu = \bd f$. We notice that
\[
f \in D(\bd),\ \bd f \ll \mm \text{ and }\frac{\de\bd f}{\de\mm}\in L^2(\sfX) \qquad \Leftrightarrow \qquad f \in D(\Delta)\text{ and in this case }\Delta f = \frac{\de\bd f}{\de\mm}.
\]
Arguing as in \cite[Equation 3.6.20]{gigli_nonsmooth_2018} it is not difficult to see that $\bd f$, although possibly singular with respect to $\mm$, must be absolutely continuous with respect to the Sobolev capacity $\Cap$.

The relevance of this definition is due to the {\bf Bochner identity} on $\RCD(K,\infty)$ spaces, where it reads as follows: up to canonical isomorphisms between $L^2(T\sfX)$ and $L^2(T^*\sfX)$, for every $\omega \in \testf\sfX$, it holds $|\omega|^2/2 \in D(\bd)$ with 
\begin{equation}\label{eq:bochner} 
\bd \frac{|\omega|^2}{2} = \bra{ -  \ang{\omega, \Delta_{\H} \omega } + | \nabla \omega|^2} \mm + \Ric(\omega),
\end{equation}
where $\Ric: \sqa{H^{1,2}_H(T^*\sfX)}^2 \to \Meas(\sfX)$ is a (quadratic) continuous map such that, in the sense of measures,
\begin{equation}\label{eq:ric-bound} 
\Ric(\omega, \omega) \ge K |\omega|^2\mm \quad \forall \omega \in H^{1,2}_H(T^*\sfX).
\end{equation}
Notice that usually (also in \cite{gigli_nonsmooth_2018}), $\Ric$ is defined on the tangent module, but here for simplicity we omit to specify a $\sharp$ operation and work only on the cotangent one.



\subsubsection*{A weaker version of Bochner formula}

For later purpose, it is useful to recast Bochner identity \eqref{eq:bochner} in a weaker form, thus allowing for a wider class of admissible forms. In order to do so, let us first integrate \eqref{eq:bochner} multiplied by a function $g \in \Lip(\sfX)$ with $g \geq 0$
\begin{equation}\label{eq:bochner-integral} 
\int_\sfX \ang{\d\omega,\d(g\omega)} + \ang{\d^*\omega,\d^*(g \omega)} - \frac 1 2\ang{\d |\omega|^2,\d g}\de\mm = \int_\sfX g |\nabla\omega|^2\de\mm + \int_\sfX g\de\Ric(\omega).
\end{equation}
Secondly, if we combine \eqref{eq:bochner-integral} with \eqref{eq:ric-bound}, we get
\[
\int_\sfX \ang{\d\omega,\d(g\omega)} + \ang{\d^*\omega,\d^*(g \omega)} - \frac 1 2\ang{\d |\omega|^2,\d g}\de\mm \geq \int_\sfX g\big(|\nabla\omega|^2 + K|\omega|^2\big)\de\mm
\]
and if we evaluate this inequality with $g_n (x) := (1-\d(B_n(x_0), x))^+$, $n \geq 1$ (where $x_0 \in \sfX$ is an arbitrary fixed point), then by $0 \le g_n \uparrow 1$, $|\d g_n| \le \chi_{B_{n+1}(x_0) \setminus B_n(x_0)} \to 0$ and the dominated convergence theorem we obtain
\[ 
2E_{\H}(\omega) \geq \int_{\sfX} | \nabla \omega|^2 \de\mm + K \int_{\sfX} |\omega|^2 \de\mm
\]
for all $\omega \in \testf\sfX$. Leveraging on this and on the continuity of $\Ric: \sqa{H^{1,2}_H(T^*\sfX)}^2 \to \Meas(\sfX)$, it is possible to pass to the limit in \eqref{eq:bochner-integral} with respect to $W^{1,2}_H(T^*\sfX)$-convergence, thus obtaining the validity of \eqref{eq:bochner} for all $\omega \in H^{1,2}_H(T^*\sfX) \cap L^\infty(T^*\sfX)$.




\subsubsection*{A $\Cap$-precise chain rule}


%
Let us show the following chain rule for functions with measure-valued Laplacian (which are in particular absolutely continuous with respect to $\Cap$).

\begin{lemma}[chain rule]\label{lem:chain-rule}
Let $g = (g^i)_{i=1}^d \in D(\bd) \cap  L^\infty (\sfX)$ and $F \in C^2_\sfb(\R^d)$. Then $F \circ g \in D(\bd) \cap L^\infty(\sfX)$ with 
\[ 
\bd F \circ  g   = \sum_{i=1}^d \partial_i F\bra{ \tilde g }\bd g^i + \sum_{i,j=1}^d \partial_{ij}^2 F\bra{ g} \ang{ \d  g^i, \d  g^j} \mm,
\]
where $\tilde g = (\tilde g^i)_{i=1}^d$ denotes the $\Cap$-precise representative of $g$. 
\end{lemma}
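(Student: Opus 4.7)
The plan is to verify directly the defining integration-by-parts identity \eqref{eq:ibp-def-meas-lap} for the candidate measure on the right-hand side, reducing it via the chain and Leibniz rules for the differential to the integration-by-parts identities that already define $\bd g^i$ for each $i$. As a preliminary step, I note that since $F \in C^2_\sfb(\R^d)$ has bounded first and second derivatives, both $F$ and each $\partial_i F$ are globally Lipschitz on $\R^d$, so the Sobolev chain rule recalled in the excerpt (applied componentwise, after subtracting $F(0)$ if necessary) gives $F \circ g \in W^{1,2}(\sfX) \cap L^\infty(\sfX)$ and $\partial_i F(g) \in L^\infty(\sfX) \cap S^2(\sfX)$ together with
\[
\d(F \circ g) = \sum_{i=1}^d \partial_i F(g)\, \d g^i, \qquad \d(\partial_i F(g)) = \sum_{j=1}^d \partial_{ij}^2 F(g)\, \d g^j.
\]

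Fixing a test function $\varphi \in \Lip(\sfX)$, I would then combine these with the Leibniz identity $\partial_i F(g)\, \d\varphi = \d(\varphi\, \partial_i F(g)) - \varphi\, \d(\partial_i F(g))$, whose terms all lie in $L^2(T^*\sfX)$ because $\varphi$, $\partial_i F(g)$ and $\varphi\, \partial_i F(g)$ belong to $L^\infty \cap S^2(\sfX)$ with bounded support, in order to rewrite
\[
-\int_\sfX \langle \d\varphi, \d(F \circ g)\rangle\, \de\mm = -\sum_{i=1}^d \int_\sfX \langle \d(\varphi\, \partial_i F(g)), \d g^i\rangle\, \de\mm + \sum_{i,j=1}^d \int_\sfX \varphi\, \partial_{ij}^2 F(g) \langle \d g^i, \d g^j\rangle\, \de\mm.
\]
The second sum already gives the action of the $\mm$-absolutely-continuous part of the claimed measure against $\varphi$, so it only remains to identify the first sum with $\sum_i \int_\sfX \varphi\, \partial_i F(\tilde g)\, \de\bd g^i$.

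For the latter, the crucial input is an extension of the defining integration-by-parts formula for $\bd g^i$ from test functions in $\Lip(\sfX)$ to functions $h \in W^{1,2}(\sfX) \cap L^\infty(\sfX)$ of bounded support, evaluated at their $\Cap$-precise representative $\tilde h$. This extension is natural because, as recalled after \eqref{eq:ibp-def-meas-lap}, $\bd g^i$ is absolutely continuous with respect to the Sobolev capacity $\Cap$: given such an $h$, I would approximate it in $W^{1,2}(\sfX)$ by a sequence $(h_n) \subset \Lip(\sfX)$ whose supports are contained in a common bounded set and pass to a subsequence along which the $\Cap$-precise representatives $\tilde h_n$ converge $\Cap$-q.e.\ to $\tilde h$, so that dominated convergence against $\bd g^i$ applies and yields $\int_\sfX \tilde h\, \de\bd g^i = -\int_\sfX \langle \d h, \d g^i\rangle\, \de\mm$. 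Applied to $h = \varphi\, \partial_i F(g)$, whose $\Cap$-precise representative is $\varphi\, \partial_i F(\tilde g)$, and summed over $i$, this produces exactly the right-hand side of the claimed identity. The main technical obstacle is precisely this last step: one must construct Lipschitz approximations of $\varphi\, \partial_i F(g)$ with controlled support whose $\Cap$-precise representatives can be arranged to converge $\Cap$-q.e., so that the limit can be taken against the capacity-continuous measure $\bd g^i$ while preserving the form of the test function.
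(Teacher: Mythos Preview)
Your proposal is correct and follows essentially the same route as the paper's proof: both reduce the defining identity \eqref{eq:ibp-def-meas-lap} for $F\circ g$ to that of the $g^i$ by applying the chain and Leibniz rules and testing $\bd g^i$ against $\varphi\,\partial_i F(\tilde g)$, the only difference being that the paper starts from the candidate measure and integrates back, while you start from $-\int_\sfX \langle \d\varphi, \d(F\circ g)\rangle\,\de\mm$. The step you single out as the ``main technical obstacle''---extending \eqref{eq:ibp-def-meas-lap} to the non-Lipschitz test function $\varphi\,\partial_i F(g)$ via its $\Cap$-precise representative---is exactly the point the paper handles tersely by noting that $F'(\tilde g)$ is $\Cap$-precise and then simply writing ``we can integrate by parts''; your explicit approximation argument supplies the justification the paper leaves implicit.
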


\begin{proof}
For simplicity, we prove the thesis in the case $d=1$ only. We notice that $F'\bra{ \tilde g }$ is a $\Cap$-precise representative, since $F' \in C^1_\sfb(\R)$. Therefore, if $h \in\Lip(\sfX)$, we can integrate by parts
\[  
\begin{split}  
\int_{\sfX} h  &  \de \sqa{ F'\bra{ \tilde g }\bd g + F''\bra{g} |\d g|^2 \mm} \\
& \quad = - \int_{\sfX} \ang{\d ( h  F'\bra{\tilde g},\d g} \de\mm + \int_{\sfX} h F''(g) |\d g|^2 \de \mm \\
& \quad = -\int_{\sfX} F'(g) \ang{\d h, \d g} \de \mm = - \int_{\sfX}\ang{\d h, \d (F \circ g) } \de \mm,
\end{split}
\]
obtaining the validity of \eqref{eq:ibp-def-meas-lap}.
\end{proof}

From this lemma and \eqref{eq:bochner} we have that, for $\omega \in H^{1,2}_{\H}(T^* \sfX) \cap L^\infty(T^* \sfX)$ and $F \in C^2_\sfb(\R)$,
\begin{equation}\label{eq:bochner-g} \bd F\bra{ \frac{|\omega|^2} {2} } =  F'\left(\frac{|\omega|^2} {2}\right) \sqa{ - \ang{\omega,  \Delta_{\H} \omega} + |\nabla \omega|^2 + \Ric(\omega)} +  F''\bra{\frac{|\omega|^2} {2}}\abs{ \d \frac{ |\omega|^2 }{2} }^2  ,\end{equation}
(where we choose the precise representative for $\abs{\omega}^2/2$).

%

\subsection{Poisson semigroups}\label{sec:poisson}  

In this section we borrow some notation from \cite{carbonaro_bellman_2013} for (shifted) Poisson semigroups on functions and forms, defined for $t \geq 0$ on an $\RCD(K, \infty)$ metric measure space as
\[ 
\cL^a := - \Delta + a \Id, \quad \sfP_t^a := \exp(-t \sqrt{\cL^a}), \quad \text{and} \quad  \vecL^a :=  \Delta_{\H}+ a \Id, \quad \vec \sfP_t := \exp( -t (\vec \cL^a)^{1/2} ),
\]
where $a \ge 0$. To lighten the notation, we will often omit the identity operator ${\rm Id}$. These semigroups can be rigorously defined respectively on $L^2(\sfX)$ and $L^2(T^* \sfX)$ via spectral calculus:
\begin{equation}\label{eq:spectral-subordination}
\sfP_t ^a f = \int_{0}^\infty \exp( -t (\lambda+a)^{1/2}) \de E_\lambda f, \quad  \quad \vec \sfP_t^a  \omega = \int_{0}^\infty \exp( -t (\lambda+a)^{1/2}) \de \vec E_\lambda \omega
\end{equation}
for $f \in L^2(\sfX)$, $\omega \in L^2(T\sfX)$, where $(E_\lambda )_{\lambda}$, $(\vec E_{\lambda})_\lambda$ denote the resolution of the identity associated to the self-adjoint non-negative operators $-\Delta$ and $\Delta_{\H}$. We have the identities 
\[ 
\partial_t^2 \sfP_t^a f = \cL^a \sfP_t^a f \quad \text{and} \quad  \partial_t^2 \vec \sfP_t^a\eta = \vec \cL^a \vec \sfP_t^a \eta, \quad \text{for $t \in (0, \infty)$,}
\]
respectively for $f \in L^2(\sfX)$, $\eta \in L^2(T^*\sfX)$, and moreover $\sfP_t^a f \in D(\Delta) \subseteq W^{1,2}(\sfX)$ and $\vec \sfP_t\eta^a  \in D( \Delta_{\H}) \subseteq H^{1,2}_{\H}(T^* \sfX)$. 

\subsubsection*{Subordination formulas} Alternatively, we can use Bochner subordination from the corresponding heat semigroups, 
\begin{equation}\label{eq:bochner-subordination} 
\sfP_t^a f  = \int_0^\infty  \h_{t^2/4s} f \frac{e^{-s-at^2/4s}}{\sqrt{\pi s}} \de s, \qquad \vec \sfP^a_t \omega = \int_0^\infty  \h_{H, t^2/4s} \omega \frac{e^{-s -at^2/4s}}{\sqrt{\pi s}} \de s.
\end{equation}
The representation above is useful to deduce the contraction and regularity properties from those of the heat semigroups (see also \cite[Lemma 2]{carbonaro_bellman_2013}). To keep the notation simple, let us introduce the function
\[ 
\rho_a(t) = \int_0^\infty \frac{e^{-s-at^2/4s}}{\sqrt{\pi s}} \de s, 
\]
and notice that $\rho_a(t) \le \rho_0(t) = 1$ for every $t$, and, if $a>0$,
\begin{equation} \label{eq:integration-rho-at} 
\int_0^\infty \rho_a(t) \de t =  \frac{1}{\sqrt{a}} \int_0^\infty e^{-s} \int_{-\infty}^\infty \frac{e^{-at^2/4s}}{\sqrt{2\pi 2s/a}} \de t \de s  = \frac{1}{\sqrt{a}}.
\end{equation}

\begin{lemma}\label{lem:poisson}	
Let $(\sfX,\dist,\mm)$ be $\RCD(K,\infty)$. For every $a \ge 0$, the following properties hold:
\begin{enumerate}[i)]
\item for every $f \in W^{1,2}(\sfX)$, $\omega \in H^{1,2}_{\H}(T^* \sfX)$ and $t \geq 0$,
\begin{equation}\label{eq:d-pt-vec} 
\d \sfP_t^a f  =  \vec \sfP_t^a \d f \quad \text{ and }  \quad \d^* \vec \sfP_t^a \omega =  \sfP_t^a \d^* \omega,
\end{equation}
\item for every $p \in [1, \infty]$, $f \in L^2 \cap L^p(\sfX)$, $\omega \in L^2 \cap L^p(T^* \sfX)$ and $t \geq 0$,
\[ 
\|\sfP_t^a  f\|_p \leq  \rho_a(t)\|f\|_p \quad \text{and} \quad \|\vec \sfP_t^{(a+K)_+} \omega\|_p \leq  \rho_a(t)\|\omega\|_p,
\]
\item for every $p \in (1, \infty)$, $f \in L^2 \cap L^p(\sfX)$ and $\omega \in L^2 \cap L^p(T^* \sfX)$ the functions $t \mapsto \sfP_t f \in L^p(\sfX)$ and $t \mapsto \vec \sfP_t \omega \in L^p(T^*\sfX)$ are continuous on $[0,\infty)$ and smooth on $(0,\infty)$; if $p=\infty$, then they are weak-* continuous on $[0, \infty)$.
\end{enumerate}
\end{lemma}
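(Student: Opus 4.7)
The plan is to derive all three statements from the corresponding properties of the heat semigroups $(\h_t)$ on functions and $(\h_{\H,t})$ on forms, via the Bochner subordination formulas \eqref{eq:bochner-subordination}. Throughout, I write $k_b(t,s) := e^{-s - bt^2/4s}/\sqrt{\pi s}$ and $u(s) := t^2/(4s)$, so that $\sfP_t^a f = \int_0^\infty \h_{u(s)}f\,k_a(t,s)\,ds$ and $\vec\sfP_t^a\omega = \int_0^\infty \h_{\H,u(s)}\omega\,k_a(t,s)\,ds$.

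For (i), the heat-level commutations \eqref{eq:commutation-heat} give $\d\h_{u(s)}f = \h_{\H,u(s)}(\d f)$ and $\d^*\h_{\H,u(s)}\omega = \h_{u(s)}(\d^*\omega)$ for every $s > 0$. To transfer them to the Poisson setting I invoke closedness of $\d$ on $W^{1,2}(\sfX)$ (respectively of $\d^*$ on its domain) and verify that the integrands in the subordination integrals are Bochner integrable with values in $L^2(T^*\sfX)$ (resp.\ $L^2(\sfX)$); this is ensured by the norm bound $\|\h_{\H,u(s)}(\d f)\|_2 \le e^{-Ku(s)}\|\d f\|_2$ and the integrability of $k_a(t,\cdot)$ on $(0,\infty)$. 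Thus $\d$ and $\d^*$ commute with the subordination integrals, yielding \eqref{eq:d-pt-vec}.

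For (ii), I insert into the subordination integrals the $L^p$-contraction bounds of the respective heat semigroups. Markovianity of $(\h_u)$ gives $\|\h_u f\|_p \le \|f\|_p$ for every $p \in [1,\infty]$ and $u \ge 0$, which immediately yields $\|\sfP_t^a f\|_p \le \|f\|_p\int_0^\infty k_a(t,s)\,ds = \rho_a(t)\|f\|_p$. For the form-valued Poisson, the Bakry-Émery bound $\|\h_{\H,u}\omega\|_p \le e^{-Ku}\|\omega\|_p$ established right after \eqref{eq:domination-pointwise} gives
\[
\|\vec\sfP_t^b\omega\|_p \le \|\omega\|_p \int_0^\infty e^{-K u(s)} k_b(t,s)\,ds = \|\omega\|_p\, \rho_{b+K}(t),
\]
provided $b + K \ge 0$ so that the integral converges. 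A short case analysis on the sign of $K$ shows that the choice $b = (a+K)_+$ makes $b + K \ge a$, and then monotonicity of $\alpha \mapsto \rho_\alpha(t) = e^{-\sqrt{\alpha}t}$ on $[0,\infty)$ gives the stated inequality.

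For (iii), smoothness on $(0,\infty)$ follows by differentiating under the subordination integral: each $t$-derivative of $k_a(t,s)$ is a polynomial in $t$ and $1/s$ times $k_a(t,s)$, which preserves Gaussian integrability in $s$ locally uniformly in $t$ on compact sub-intervals of $(0,\infty)$, while the $L^p$-norm of $\h_{u(s)}f$ (resp.\ $\h_{\H,u(s)}\omega$) is uniformly controlled on such sub-intervals. Continuity at $t = 0^+$ in $L^p$ with $p < \infty$ follows from the decomposition $\sfP_t^a f - f = \int_0^\infty (\h_{u(s)}f - f)\,k_a(t,s)\,ds + (\rho_a(t)-1)f$ combined with dominated convergence (exploiting strong $L^p$-continuity of $u \mapsto \h_u f$ at $u = 0^+$ and $\rho_a(t) \to 1$ as $t \to 0^+$); an analogous decomposition handles the form case. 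For $p = \infty$, strong continuity generally fails, but weak-$*$ continuity follows by duality from the $p = 1$ statement. The main technical nuisance I foresee is (ii) in the regime $K < 0$, where the form Bakry-Émery introduces the exponentially growing factor $e^{|K|u}$ and the shift $b = (a+K)_+$ must be carefully balanced against the subordination weight $e^{-bt^2/4s}$ to guarantee both convergence of the integral and the desired $\rho_a(t)$ decay.
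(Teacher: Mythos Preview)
Your overall strategy---deriving (i)--(iii) from the heat-semigroup level via the subordination formulas \eqref{eq:bochner-subordination}---is exactly the paper's approach, and your treatment of (i) and of the scalar half of (ii) is fine.

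There is, however, a genuine gap in your handling of (ii) for forms when $K<0$. Your computation correctly gives $\|\vec\sfP_t^b\omega\|_p \le \rho_{b+K}(t)\|\omega\|_p$, but the subsequent claim that ``the choice $b=(a+K)_+$ makes $b+K\ge a$'' is false for $K<0$. If $K<0$ and $a+K\ge 0$, then $b+K = a+2K < a$; if $a+K<0$, then $b=0$ and $b+K = K < 0$, so the subordination integral even diverges. In neither case do you reach $\rho_a(t)$. (The paper only says ``by a similar argument'', so it does not spell this out either; the stated inequality with shift $(a+K)_+$ appears to be a typo for $(a-K)_+$, for which your computation does go through: $(a-K)_+ + K \ge a$ in all cases, with equality when $a\ge K$.) Your proposed ``short case analysis'' does not exist as stated.

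A smaller point on (iii): in the parametrization you use, $u(s)=t^2/(4s)$ depends on $t$, so when you differentiate the subordination integral in $t$ you must also differentiate $s\mapsto \h_{u(s)}f$, not only $k_a(t,s)$. The paper handles this by writing out the difference quotient and splitting it into two terms via the product rule; alternatively, a change of variables $r=t^2/(4s)$ puts all the $t$-dependence into the kernel and makes your ``differentiate only the kernel'' argument valid. As written, your justification is incomplete.
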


\begin{proof}
The first claim follows from \eqref{eq:bochner-subordination} and \eqref{eq:commutation-heat}. For \emph{ii)} recall that $\|\h_t f\|_p \leq \|f\|_p$ for all $p \in [1,\infty]$ and notice that, by Minkowski integral inequality,
\[ 
\nor{ \sfP_t^a f}_p \le \int_0^\infty \nor{\h_{t^2/4s} f}_p \frac{e^{-s-at^2/4s}}{\sqrt{\pi s}} \de s  = \rho_a(t) \nor{f}_p.
\]
The second part of the statement follows by a similar argument, using $\nor{\h_{\H, t} \omega}_p \le e^{-K t }\nor{\omega}_p$ (previously obtained as a consequence of \eqref{eq:domination-pointwise}). 
Finally, as concerns \emph{iii)}, we first note that the claimed regularity properties hold true for $t \mapsto \h_t f$ and $t \mapsto \h_{\H,t}\omega$ by the already cited \cite[Theorem III.1]{stein_topics_1970}. To show that the semigroups $\sfP$ and $\vec \sfP$ inherit the same properties, it is sufficient to argue by dominated convergence theorem in the integral representations \eqref{eq:bochner-subordination}, so to commute differentiation and integration. More explicitly, we observe that
\[
\begin{split}
\frac{1}{h}\|\sfP_{t+h}^a f - \sfP_t^a f\|_p & \leq \int_0^\infty \frac{1}{h}\|\h_{(t+h)^2/4s}f - \h_{t^2/4s}f\|_p \frac{e^{-s-a(t+h)^2/4s}}{\sqrt{\pi s}}\de s \\
& \quad + \int_0^\infty \|\h_{t^2/4s}f\|_p\frac{e^{-s-a(t+h)^2/4s} - e^{-s-at^2/4s}}{h \sqrt{\pi s}}\de s
\end{split}
\]
and remark that in the first integral, $\frac{e^{-s-a(t+h)^2/4s}}{\sqrt{\pi s}}$ is bounded by an integrable function, uniformly in $|h| < t/2$, while $t \mapsto \h_{t^2/4s}f$ is $C^\infty$ as $L^p$-valued map, so that $h^{-1}\|\h_{(t+h)^2/4s}f - \h_{t^2/4s}f\|_p$ converges locally uniformly as $h \to 0$, hence it is locally bounded, uniformly in $|h| < t/2$: this allows to apply the dominated convergence theorem and pass to the limit as $h \to 0$ inside the integral. As concerns the second integral, we use again the fact that $\|\h_{t^2/4s} f\|_p \leq \|f\|_p$ and note that
\[
\frac{e^{-s-a(t+h)^2/4s} - e^{-s-at^2/4s}}{h \sqrt{\pi s}} = \frac{e^{-s-at^2/4s}}{\sqrt{\pi s}}\frac{e^{-ath/2s-ah^2/4s} - 1}{h} \leq C\frac{e^{-s-at^2/4s}}{\sqrt{\pi}s^{3/2}},
\]
with $C$ depending continuously on $a,t$ but independent of $s$, so that also in this case the dominated convergence theorem can be applied. The argument for $\vec \sfP$ is completely analogous.
\end{proof}


\subsubsection*{Vertical and horizontal derivatives} We introduce the following notation for derivatives of functions $(f_t)_{t \ge 0}$, $f_t = \sfP_t^a f$, $f \in L^2(\sfX)$  and forms $(\eta_t)_{t \ge 0}$,  $\eta_t = \vec \sfP_t^a \eta$, $\eta \in L^2\cap L^\infty(\sfX)$:
\[ \begin{split} \hat f &:= \frac{f}{\abs{f}} \chi_{\cur{|f| \neq 0}}, \qquad \partial_t^{\parallel} f := \hat f \partial_t f, \qquad \nabla^\parallel f := \hat f \d f, \\
 \hat \eta & := \frac{\eta}{\abs{\eta}} \chi_{\cur{\abs{\eta} \neq 0}}, \qquad \partial_t^{\parallel} \eta  := \ang{ \hat{\eta}, \partial_t \eta},\qquad   \nabla^{\parallel} \eta := \nabla \eta ( \hat{\eta}^\sharp, \cdot ).\end{split}\]
We also write 
\[
\begin{split} 
| \partial_t^{\perp} f|^2 & :=   |\partial_t f |^2 - | \partial_t^\parallel f |^2 \qquad |\nabla^{\perp} f |^2 :=  |\d  f |^2 - |\nabla^\parallel f|^2 \\
|\partial_t^{\perp} \eta|^2 & :=   |\partial_t \eta |^2 - | \partial_t^\parallel \eta|^2 \qquad |\nabla^{\perp} \eta|^2 :=  |\nabla  \eta |^2 - |\nabla^\parallel \eta|^2 \end{split}
\]
and finally
\[ 
|\overline{\nabla} f|^2 :=  |\partial_t f|^2 + |\d f|^2 =  |\partial_t f|^2 + |\nabla^{\parallel} f|^2 + |\nabla^{\perp} f|^2,
\]
and similarly for $\eta$.

Using this notation, we have e.g.\
\begin{equation}\label{eq:first-order-identities} \partial_t \frac{|\eta|^2}{2} = \ang{ \eta, \partial_t \eta}  = \abs{\eta} \partial_t^{\parallel} \eta, \qquad \d \frac{|\eta|^2}{2}  =  \nabla \eta (\eta^\sharp, \cdot) =  \abs{\eta}\nabla^{\parallel} \eta, \end{equation}

%


\subsubsection*{A chain rule inequality} Combining \cref{lem:chain-rule}, Bochner formula \eqref{eq:bochner} and inequality \eqref{eq:ric-bound}, we obtain the following result.

\begin{lemma}[chain rule inequality]\label{lem:convexity}
Let $(\sfX,\dist,\mm)$ be an $\RCD(K,\infty)$ space and let $a \ge -K$. For any $d \ge 1$, $\eta = (\eta^i)_{i=1}^d \in \bra{ L^2\cap L^\infty(\sfX)} \cup \bra{ L^2\cap L^\infty(T^*\sfX)}$, write $\eta_t = (\eta^i_t)_{i=1}^d$ for the (componentwise) evolution with the Poisson semigroup $\sfP^a$ (in case of functions) $\vec \sfP^a$ (in case of forms) and $|\eta_t| = (|\eta^i_t|)_{i=1}^d$. Let
\[ 
F \in C^2( [0, \infty)^d; \R)  \text{ be convex with $\partial_i F \ge 0$, for $i \in \cur{1, \ldots, d}$,}
\]
and $w \in W^{1,2}(\sfX)$, $w \ge 0$. Then, for every $t >0$, $F \circ |\eta_t|  \in W^{1,2}(\sfX)$ and the following inequality holds in the sense of distributions on $t \in (0, \infty)$,
\begin{equation}\label{eq:convexity} \begin{split} 
\partial_t^2 \int_{\sfX} \bra{  F \circ |\eta_t|}   w \de \mm \ge  &  \int_{\sfX} \sum_{i,j=1}^d \partial_{ij}^2 F \bra{ \partial_t^\parallel \eta^i_t  \partial_t^\parallel \eta^j_t + \ang{ \nabla^\parallel \eta^i_t, \nabla ^\parallel\eta^j_t} } w \de \mm \\
& + \int_{\sfX} \sum_{i=1}^d \partial_i F |\eta_t^i|^{-1}\bra{ | \partial_t ^{\perp}\eta^i_t |^2 +  | \nabla ^{\perp}\eta^i_t |^2} w \de \mm \\
& +  \int_{\sfX} \ang{\d  (F\circ |\eta_t|), \d  w }  \de \mm,
\end{split}
\end{equation}
where, in the right-hand side, all the partial derivatives of $F$ are evaluated at $|\eta_t|$.
\end{lemma}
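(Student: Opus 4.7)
The plan is to differentiate twice under the integral, use the pointwise identity
\[ |\eta_t^i|\,\partial_t^2|\eta_t^i|=\langle\eta_t^i,\partial_t^2\eta_t^i\rangle+|\partial_t^\perp\eta_t^i|^2 \]
together with the spectral ``wave equation'' $\partial_t^2\eta_t^i=\Delta_\H\eta_t^i+a\eta_t^i$ (in the form case; for functions $\partial_t^2\eta_t^i=(-\Delta+a)\eta_t^i$) to produce a Hodge Laplacian contribution, and finally convert that contribution into the covariant derivatives of \eqref{eq:convexity} by combining \cref{lem:chain-rule} with Bochner's identity \eqref{eq:bochner}. The hypothesis $a\ge-K$ enters at the very end to absorb the residual Ricci-plus-shift term.

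Concretely, the smoothness of $t\mapsto\eta_t$ (\cref{lem:poisson}\emph{(iii)}) permits differentiation under the integral, and the chain rule for $F\in C^2$ together with $\partial_t|\eta_t^i|=\partial_t^\parallel\eta_t^i$ gives
\[ \partial_t^2(F\circ|\eta_t|)=\sum_{i,j}\partial_{ij}^2F\,\partial_t^\parallel\eta_t^i\,\partial_t^\parallel\eta_t^j+\sum_i\partial_iF\Bigl(\frac{|\partial_t^\perp\eta_t^i|^2}{|\eta_t^i|}+\frac{\langle\eta_t^i,\Delta_\H\eta_t^i\rangle}{|\eta_t^i|}+a|\eta_t^i|\Bigr). \]
To reshape the middle summand, I would apply \cref{lem:chain-rule} with $g^i:=|\eta_t^i|^2/2\in D(\bd)$ (membership granted by \eqref{eq:bochner}) and outer function $\tilde F(r_1,\dots,r_d):=F(\sqrt{2r_1},\dots,\sqrt{2r_d})$, so that $\tilde F\circ g=F\circ|\eta_t|$. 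Computing $\partial\tilde F,\partial^2\tilde F$ in terms of $\partial F,\partial^2F$, exploiting $\de(|\eta_t^i|^2/2)=|\eta_t^i|\nabla^\parallel\eta_t^i$ together with \eqref{eq:bochner}, and using the splitting $|\nabla\eta_t^i|^2=|\nabla^\parallel\eta_t^i|^2+|\nabla^\perp\eta_t^i|^2$, a few lines of algebra lead to
\[ \sum_i\frac{\partial_iF}{|\eta_t^i|}\langle\eta_t^i,\Delta_\H\eta_t^i\rangle\,\mm=-\bd(F\circ|\eta_t|)+\sum_{i,j}\partial_{ij}^2F\,\langle\nabla^\parallel\eta_t^i,\nabla^\parallel\eta_t^j\rangle\,\mm+\sum_i\frac{\partial_iF}{|\eta_t^i|}|\nabla^\perp\eta_t^i|^2\,\mm+\sum_i\frac{\partial_iF}{|\eta_t^i|}\Ric(\eta_t^i). \]
Substituting back, integrating against $w\ge 0$, rewriting $-\int w\,\de\bd(F\circ|\eta_t|)=\int\langle\de w,\de(F\circ|\eta_t|)\rangle\,\de\mm$ after Lipschitz approximation of $w$, and bounding $\Ric(\eta_t^i)\ge K|\eta_t^i|^2\mm$ (multiplied by the nonnegative function $\partial_iF/|\eta_t^i|$, which is legitimate since $\Ric(\eta_t^i)-K|\eta_t^i|^2\mm$ is a non-negative measure), one recovers the right-hand side of \eqref{eq:convexity} up to a residual term $(a+K)\int\sum_i\partial_iF\,|\eta_t^i|\,w\,\de\mm\ge 0$, which is dropped. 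The scalar case is analogous with $g^i:=(\eta_t^i)^2/2$ in \cref{lem:chain-rule}: no Ricci contribution appears and $|\nabla^\perp\eta_t^i|^2=0$ $\mm$-a.e.\ by locality of the differential on the level set $\{\eta_t^i=0\}$.

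The principal technical difficulty lies in the lack of $C^2$-regularity of $\tilde F$ at $\{r_i=0\}$ and the singularity of $|\eta_t^i|^{-1}$ on $\{|\eta_t^i|=0\}$, which prevent a direct application of \cref{lem:chain-rule}. I would overcome this by the standard regularization $|\eta_t^i|_\veps:=\sqrt{|\eta_t^i|^2+\veps}$, correspondingly modifying $\tilde F$ so as to be smooth up to the boundary, performing all computations rigorously at the level $\veps>0$, and finally letting $\veps\downarrow 0$ by monotone and dominated convergence, adopting the convention $|\eta_t^i|^{-1}|\cdot|^2:=0$ on $\{|\eta_t^i|=0\}$; this is internally consistent since $\nabla^\perp\eta_t^i$ vanishes a.e.\ on that set by locality of the differential.
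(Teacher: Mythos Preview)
Your plan is correct and follows essentially the same route as the paper: regularize the square root, apply the chain rule for the measure-valued Laplacian together with Bochner's identity, discard the non-negative residual $(a+K)$-term plus the extra diagonal term coming from the second derivative of the regularized square root, and pass to the limit. The only cosmetic difference is the bookkeeping of the regularization: the paper keeps $F$ fixed and replaces the inner function $|\eta^i|$ by $\sigma_\varepsilon(|\eta^i|^2/2)$ with the specific choice $\sigma_\varepsilon(z)=(\varepsilon+4z^2)^{1/4}-\varepsilon^{1/4}$, whereas you push the square root into the outer function $\tilde F$ and regularize with $\sqrt{|\eta^i|^2+\varepsilon}$; both choices satisfy the crucial positivity $z^2\sigma_\varepsilon''(z^2/2)+\sigma_\varepsilon'(z^2/2)\ge 0$ that allows the extra $|\partial_t^\parallel\eta^i|^2+|\nabla^\parallel\eta^i|^2$ term to be dropped.

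One small point to tighten: in the limit $\varepsilon\to 0$ the Hessian block $\partial_{ij}^2 F$ has no sign, so ``monotone convergence'' does not apply termwise; the paper handles this by observing that the full right-hand side of the regularized inequality, minus the $\langle\d(F\circ|\eta_t|),\d w\rangle$ term, is non-negative and then invokes Fatou's lemma at the level of distributions in $t$. Also, your remark that $\nabla^\perp\eta^i=0$ a.e.\ on $\{|\eta^i|=0\}$ by locality is correct for forms as well (locality of the covariant derivative), so the convention $|\eta^i|^{-1}\cdot 0:=0$ is indeed harmless---the paper arrives at the same convention implicitly via the factor $\chi_{\{|\eta^i|\neq 0\}}$ that survives the limit.
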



\begin{proof} 
By density, it is sufficient to assume that $w \in \Lip(\sfX)$. Moreover, to simplify notation, we assume that $\eta^i \in L^2 \cap L^\infty(T^*\sfX)$ (we point out below the minor differences in the general case).

For $\eps>0$, we introduce the following regularization of $z \mapsto \sqrt{ 2 z}$, $z \ge 0$,
\[ 
\sigma_\veps(  z ) := (\veps+ 4z^2)^{1/4} - \veps^{1/4}.
\]
Notice that $0 \leq \sigma_\eps (z)\le \sqrt{ 2 z}$ and $\lim_{\eps\to 0} \sigma_{\eps}(z)= \sqrt{2z}$. Moreover, since
\[ 
\sigma'_\eps(z) = \frac{ 2 z}{(\eps + 4z^2)^{3/4}} \qquad \text{and} \qquad \sigma''_\eps(z) = \frac{-4z^2 + 2\eps}{(\eps+ 4z^2)^{7/4}},
\]
we have
\begin{equation}\label{eq:g'} 
0 \le  z \sigma'_\veps(z^2/2)  = \frac{ z^3}{(\veps+ z^4)^{3/4}} \le 1, \qquad  \lim_{\veps \to 0} z \sigma'_\veps(z^2/2) = \chi_{\cur{z \neq 0}},
\end{equation}
and
\begin{equation}\label{eq:g''} 
z^2 \sigma''_{\veps}( z^2/2) +  \sigma'_\veps(z^2/2)  = \frac{ 3\varepsilon}{z(\veps + z^4)^{7/4}} \ge 0.
\end{equation}
To keep notation simple, let us also write
\[ 
g_\veps = (g^i_\veps)_{i=1}^d := \sigma_{\veps} (|\eta^i|^2/2), \quad g'_\veps := (\sigma'_\veps(|\eta^i|^2/2))_{i=1}^d, \quad g''_\veps := (\sigma''_\veps(|\eta^i|^2/2))_{i=1}^d.
\]
Let us prove first that $F \circ |\eta| \in W^{1,2}(\sfX)$. To this end, it is sufficient to apply the chain rule to $F \circ g_\veps$, i.e.\
\begin{equation}\label{eq:derivative-convexity} 
\d (F \circ g_\veps) = \sum_{i=1}^d \partial_i F \d g^i_\veps = \sum_{i=1}^d \partial_i F  (g'_\veps)^i \abs{\eta^i} \nabla^\parallel \eta^i_t,
\end{equation}
(we omit to specify that the partial derivatives of $F$ are evaluated at $g_\veps$). For $t>0$, we have $|\d \eta^i_t| \in L^2(\sfX)$ and $|\eta_t^i| \in L^\infty(\sfX)$, hence by \eqref{eq:g'}  it follows that \eqref{eq:derivative-convexity} converges in $L^2(TX)$, i.e.\ $F\circ |\eta| \in W^{1,2}(\sfX)$ (even with uniform bounds on compact subsets of $(0, \infty)$.).

To obtain \eqref{eq:convexity}, it is sufficient to prove, for every $\veps>0$, the following inequality
\begin{equation}\label{eq:convexity-approx} 
\begin{split} 
\partial_t^2 \int_{\sfX} F \bra{ (g^i_\veps)_{i=1}^d} w \de \mm \ge  &  \int_{\sfX} \sum_{j,k=1}^d \partial_{ij}^2 F \, (g'_\veps)^j |\eta^j_t| \, (g'_\veps)^k |\eta^k_t|\cdot  \\
& \quad \quad  \cdot \bra{ \partial_t^\parallel \eta^j_t  \partial_t^\parallel \eta^k_t + \ang{ \nabla^\parallel \eta^j_t, \nabla^\parallel \eta^k_t} } w \de \mm \\
& + \int_{\sfX} \sum_{j=1}^d \partial_j F (g'_\veps)^j |\eta^j_t| |\eta_t^j|^{-1}\bra{ | \partial_t^\perp \eta^j_t|^2 +  | \nabla^\perp \eta^j_t|^2} w \de \mm \\
& + \int_\sfX \ang{\d F \bra{ (g^i_\veps)_{i=1}^d}, \d w} \de \mm.
\end{split}
\end{equation}
Indeed, \eqref{eq:convexity} is then recovered by passing to the limit as $\eps\to 0$, since $(g'_\veps)^{i} |\eta^i| \to \chi_{\cur{\abs{\eta^i} \neq 0}}$ and Fatou's lemma ensures convergence in the sense of distributions, since the right-hand side is larger than $- \nor{\d (F \circ |\eta_t|)}_2 \nor{\d w}_2$ which is integrable (recall that $F \circ |\eta_t| \in W^{1,2}(\sfX)$ uniformly on compact subsets of $(0, \infty)$).

In what follows we argue for fixed $\veps>0$, hence for simplicity we drop it in the notation. We first collect some identities valid for every $i \in \cur{1, \ldots, d}$, that follows easily from the chain rule. Again, for simplicity, we also drop the superscript $i$.
\begin{equation} \label{eq:first-order} \partial_t g = g' \partial_t \frac{|\eta|^2}{2} = g' \abs{\eta} \partial_t ^\parallel \eta 
\end{equation}
and
\begin{equation}\label{eq:second-order}
\begin{split} 
\partial_t^2 g & = g'' \abs{ \partial_t \frac{|\eta|^2}{2}}^2 + g'  \partial_t^2 \frac{|\eta|^2}{2} \\
& = g'' |\eta|^2 |\partial_t^\parallel \eta |^2  + g' \sqa{\abs{ \partial_t \eta}^2 + \ang{\eta, \vec \cL^{a} \eta}}.
\end{split}
\end{equation}
Moreover, by \eqref{eq:bochner-g} we have the identity  between measures (functions should be regarded as densities with respect to $\mm$)
\begin{equation}
g' \ang{\eta, \vec \cL^a \eta} = \cL^{0} g  +  g''|\eta|^2 |\nabla^ \parallel\eta|^2 + g' \sqa{ |\nabla \eta|^2 + \Ric(\eta) +a |\eta|^2},\end{equation}
(if $\eta$ is a function, the term $\Ric(\eta)$ is missing). Substituting this identity into \eqref{eq:second-order}, we conclude that
\begin{equation}\label{eq:key-identity}
\begin{split} 
\partial_t^2  g  & = g'' | \eta |^2 \sqa{ |\partial_t^\parallel \eta|^2  + | \nabla^\parallel \eta|^2 } +g'\sqa{ |\partial_t \eta|^2 + |\nabla \eta|^2 + \Ric(\eta)+a |\eta|^2 } + \cL g\\
& = g'\sqa{ |\partial_t^\perp \eta|^2 + |\nabla^\perp \eta|^2} + \bra{g'' | \eta |^2 + g'} \sqa{ | \partial_t^\parallel \eta|^2 + |\nabla ^\parallel\eta |^2} \\
& \quad + g' \sqa{\Ric(\eta)+a|\eta|^2} + \cL g
\end{split}
\end{equation}
and we observe that all but the last addends in the right-hand side above are non-negative.

We have all the tools to prove \eqref{eq:convexity}. Indeed, by the chain rule (recall that $t \mapsto \eta_t$ are smooth),
\[ \partial_t^2 \int_{\sfX} (F \circ g)  w \de \mm = \int_{\sfX} \sum_{i,j=1}^d \partial_{ij}^2 F \partial_t g^i \partial_t g^j  w \de \mm + \int_\sfX \sum_{i=1}^d \partial_i F \partial_t^2 g^i w \de \mm.\]
(For simplicity, we avoid to specify that the derivatives of $F$ are evaluated at $g$.) By \eqref{eq:first-order}, we already have that
\[  
\int_{\sfX} \sum_{i,j=1}^d \partial_{ij}^2 F\,  \partial_t g^i \partial_t g^j w \de \mm =   \int_{\sfX} \sum_{i,j=1}^d \partial_{ij}^2 F \,  (g')^i |\eta^i_t| \, (g')^j |\eta^j_t| \, \partial_t ^\parallel\eta^j_t \partial_t ^\parallel\eta^k_t  w \de \mm,
\]
hence we focus on the remaining terms 
\[ 
\int_{\sfX} \sum_{i=1}^d \partial_i F \partial_t^2  g^i w \de \mm.
\]
For any $i \in \cur{1, \ldots, d}$, we integrate \eqref{eq:key-identity} multiplied by the precise representative of $(\partial_i F\circ g)w \ge 0$. Since
\[ 
\int_{\sfX} w \partial_i F    \sqa{ \bra{g'' | \eta |^2 + g'} \bra{ | \partial_t^\parallel \eta|^2  + | \nabla ^\parallel\eta |^2}\de \mm + g' \bra{ \de \Ric(\eta^\sharp) +a |\eta|^2 \de \mm  }}\ge 0, 
\]
we obtain that
\[  
\int_{\sfX} \partial_i F  \partial_t^2  g^i  w \de \mm \ge  \int_{\sfX}\partial_i F   (g')^i \sqa{ |\partial_t^\perp \eta^i|^2  + |\nabla^\perp \eta^i|^2} w \de \mm  + \int_{\sfX} w \partial_i F \cL g^i.
\]
Summing upon $i \in \cur{1,\ldots, d}$, we recover the last two lines in \eqref{eq:convexity-approx}. Indeed, integrating by parts which is allowed by the discussion above),
\[
\begin{split} 
\int_{\sfX} w \sum_{i=1}^d \partial_i  F \circ g \cL g^i & = \int_{\sfX} \sum_{i,j=1}^d \partial_{ij}^2   F\ang{ \d g^i, \d g^j} w \de \mm +\int_{\sfX} \sum_{i=1}^d \partial_{i}   F \ang{\d w, \d g^i } \de \mm \\
& = \int_{\sfX} \sum_{i,j=1}^d \partial_{ij}^2 F \, (g')^i |\eta^i_t| \,  (g')^j |\eta^j_t| \,  \ang{ \nabla^\parallel \eta^i, \nabla^\parallel \eta^j} w \de \mm \\
& \quad \quad + \int_{\sfX} \ang{  \d  (F\circ g) , \d  w} \de \mm.
\end{split}
\]
\end{proof}

We end this section with the following simple result that can be also obtained using the structure theorem~\cite[Theorem 1.4.11]{gigli_nonsmooth_2018}.

\begin{lemma}\label{lem:positive-form}
Let $d \ge 1$, $A, B: \sfX \to \R^{d \times d}$ be Borel, with values $\mm$-a.e.\  in the set of symmetric matrices. If $A \ge B$ $\mm$-a.e.\ (in the sense of quadratic forms), then for any $\eta (\eta^i)_{i=1}^d \in L^2(T^* X)$ one has
\[ \sum_{i,j=1}^d A_{ij}(x) \ang{ \eta^i(x), \eta^j(x)} \ge \sum_{i,j=1}^d B_{ij}(x) \ang{ \eta^i(x), \eta^j(x)} \quad \text{$\mm$-a.e.\ $x \in \sfX$.}\]
\end{lemma}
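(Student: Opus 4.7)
\bigskip

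\noindent\textbf{Proof proposal.} The plan is to reduce the claimed pointwise inequality to an elementary linear-algebraic fact about symmetric positive semidefinite forms acting on tuples of Hilbert space vectors, and then to apply it $\mm$-a.e.\ via the structure theorem for $L^\infty$-modules.

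\smallskip

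First, I would invoke the structure theorem \cite[Theorem 1.4.11]{gigli_nonsmooth_2018}, already used repeatedly in the paper, which identifies each element $\eta^i \in L^2(T^*\sfX)$ with a (strongly) Borel section $x \mapsto \eta^i(x)$ of an $\mm$-measurable field of separable Hilbert spaces $(\mathscr{H}_x)_{x \in \sfX}$, in such a way that the module inner product $\ang{\eta^i,\eta^j}$ coincides $\mm$-a.e.\ with the fiberwise Hilbert inner product $\ang{\eta^i(x),\eta^j(x)}_{\mathscr{H}_x}$. Let $N_0 \subset \sfX$ be an $\mm$-negligible set outside which all the representatives are defined and $A(x) \ge B(x)$ as symmetric matrices; set $C(x) := A(x)-B(x) \ge 0$ on $\sfX \setminus N_0$.

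\smallskip

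Next, I would establish the following deterministic fact: if $C \in \R^{d\times d}$ is symmetric and positive semidefinite and $v_1,\dots,v_d$ are vectors in a Hilbert space $(\mathscr{H},\ang{\cdot,\cdot})$, then
\[
\sum_{i,j=1}^d C_{ij} \ang{v_i,v_j} \ge 0.
\]
To see this, diagonalize $C = \sum_{k=1}^d \lambda_k e_k e_k^{\top}$ with $\lambda_k \ge 0$ and $(e_k)$ an orthonormal basis of $\R^d$, and compute
\[
\sum_{i,j=1}^d C_{ij}\ang{v_i,v_j} = \sum_{k=1}^d \lambda_k \sum_{i,j=1}^d (e_k)_i (e_k)_j \ang{v_i,v_j} = \sum_{k=1}^d \lambda_k \Big\| \sum_{i=1}^d (e_k)_i\, v_i \Big\|_{\mathscr{H}}^2 \ge 0.
\]

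\smallskip

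Finally, applied at each $x \in \sfX \setminus N_0$ with $C = C(x)$, $\mathscr{H} = \mathscr{H}_x$ and $v_i = \eta^i(x)$, this yields
\[
\sum_{i,j=1}^d A_{ij}(x) \ang{\eta^i(x),\eta^j(x)} - \sum_{i,j=1}^d B_{ij}(x) \ang{\eta^i(x),\eta^j(x)} = \sum_{i,j=1}^d C_{ij}(x) \ang{\eta^i(x),\eta^j(x)} \ge 0
\]
for $\mm$-a.e.\ $x$, which is exactly the claim. I do not foresee a genuine obstacle: once the structure theorem is available the statement is essentially a $d$-dimensional linear-algebra tautology, and the only care required is to check that the pointwise identification of the module inner product is compatible with the fiberwise Hilbert one, which is precisely the content of \cite[Theorem 1.4.11]{gigli_nonsmooth_2018}.
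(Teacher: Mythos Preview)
Your proposal is correct and follows essentially the same route as the paper: reduce to the positive semidefinite case $C=A-B\ge 0$ and exhibit $\sum_{i,j} C_{ij}\ang{\eta^i,\eta^j}$ as a sum of non-negative terms. The only cosmetic difference is that the paper uses a square-root factorization $A=C^\tau C$ (so the sum becomes a single module norm squared) rather than your spectral decomposition, and works directly in the module without explicitly invoking the structure theorem; both variants are equivalent here.
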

\begin{proof}
It is sufficient to prove the case $B=0$. We can decompose $A(x) = C(x)^\tau C(x)$, with $x \mapsto C(x)$ Borel, so that $\mm$-a.e.\ on $\sfX$,
\[ \sum_{i,j=1}^d A_{ij} \ang{ \eta_i, \eta_j}  = \abs{ \sum_{i=1}^d C_{ij} \eta_i }^2 \ge 0.\qedhere \]
\end{proof}

\subsection{Bellman function}\label{sec:bell}

In this section we recall some properties of the following Bellman function $\bell(u,v)$, introduced in \cite{dragivcevic2011bilinear}, defined for $u$, $v\ge0$ as
\begin{equation}
\label{eq:bellman}
\bell\bra{u,v} := u^p + v^q + \delta \begin{cases} \displaystyle{\frac 2 p u^p + \bra{ \frac 2 q -1} v^q} & \quad \text{if $u^p \ge v^q$} \\
u^2 v^{2-q} & \quad \text{if $u^p\le v^q$,}
\end{cases}
\end{equation}
where $p \ge 2$, $q = p/(p-1)$ and $\delta := q(q-1)/8$. Young inequality with exponents $p/2$ and $p/(p-2) = q/(2-q)$ yield, for every $u$, $v \geq 0$,
\begin{equation}\label{eq:bound-bad-good-bellman}
u^2 v^{2-q} \le \frac 2 p u^p + \bra{ \frac 2 q -1} v^q,
\end{equation}
whence the upper bound
\begin{equation}
\label{eq:bellman-upper-bound}
\bell\bra{u,v} \le \bra{1 + \frac 2 p} u^p + \frac 2 q v^q.
\end{equation}
Moreover $\bell$ is $C^1$ for $u$, $v>0$, with
\[ 
\begin{split} 
\partial_u \bell(u,v) & = p u^{p-1} + \delta  \begin{cases} 2 u^{p-1} &\quad \text{if $u^p \ge v^q$} \\
2u v^{2-q} & \quad \text{if $u^p \le v^q$,}
\end{cases}\\
\partial_v \bell(u,v) & = q v^{q-1} + \delta  \begin{cases} (2-q) v^{q-1} &\quad \text{if $u^p \ge v^q$} \\
(2-q)u^2 v^{1-q} & \quad \text{if $u^p \le v^q$.}
\end{cases}
\end{split}
\]
In particular we have the inequalities
\begin{equation}\label{eq:bellman-derivatives-upper-bound} 
\partial_u \bell(u,v) \le p(1+\delta) u^{p-1} + 2 \delta v, \qquad \partial_v\bell(u,v) \le (q+\delta (2-q)) v^{q-1}.
\end{equation}
In the next lemma we recall some convexity and monotonicity properties of the function $\bell$.

\begin{lemma}\label{lem:bellman-convexity}
Let $\delta = q(q-1)/8$. Then the following inequalities hold for every $u>0$, $v>0$ with $u^p \neq v^q$:
\begin{equation}\label{eq:first-derivative-bellman} u^{-1} \partial_u \bell\bra{u,v} \ge 2\delta v^{2-q}, \quad  v^{-1} \partial_v \bell\bra{u,v} \ge 2\delta v^{q-2},\end{equation}
\begin{equation}\label{eq:second-derivative-bellman}
\partial_{uu}^2 \bell\bra{u,v} \alpha^2 + 2 \partial_{uv}^2\bell\bra{u,v} \alpha \beta + \partial_{vv}^2 \bell\bra{u,v} \beta^2 \ge  \delta \bra{ v^{2-q} \alpha^2 + v^{q-2} \beta^2 },
\end{equation}
for every $\alpha, \beta \in \R$.
\end{lemma}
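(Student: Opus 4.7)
The function $\bell$ is piecewise smooth with the break along $\{u^p=v^q\}$, so on each of the two open regions $\Omega_1=\{u^p>v^q\}$ and $\Omega_2=\{u^p<v^q\}$ one has closed-form expressions for its partial derivatives (already displayed in the excerpt for the first derivatives, and easily computed for the second). Two algebraic identities involving the conjugate exponents $p,q$ will recur throughout: first, $(p-1)(q-1)=1$, from which $q(p-2)/p=2-q$, so that $u^p \ge v^q$ is equivalent to $u^{p-2} \ge v^{2-q}$; second, $q(q-1)=8\delta$ by the very definition of $\delta$.

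Inequalities \eqref{eq:first-derivative-bellman} come out immediately. On $\Omega_1$ one has $u^{-1}\partial_u \bell=(p+2\delta)u^{p-2}\ge (p+2\delta)v^{2-q}\ge 2\delta v^{2-q}$, and $v^{-1}\partial_v\bell = (q+\delta(2-q))v^{q-2}\ge qv^{q-2}\ge 2\delta v^{q-2}$, since $\delta\le 1/4$ for $q\in(1,2]$. On $\Omega_2$ the desired lower bounds appear as one explicit summand of each formula for $\partial_u\bell$ and $\partial_v\bell$, the other being non-negative. For the Hessian inequality \eqref{eq:second-derivative-bellman}, the region $\Omega_1$ is similarly easy because the Hessian is diagonal there: $\partial^2_{uu}\bell=(p-1)(p+2\delta)u^{p-2}\ge \delta v^{2-q}$ reduces to $u^{p-2}\ge v^{2-q}$ together with $(p-1)(p+2\delta)\ge \delta$, while $\partial^2_{vv}\bell=(q-1)(q+\delta(2-q))v^{q-2}\ge q(q-1)v^{q-2}=8\delta v^{q-2}\ge \delta v^{q-2}$.

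The substantive computation is on $\Omega_2$, where $\bell(u,v)=u^p+v^q+\delta u^2 v^{2-q}$ gives
\begin{align*}
\partial^2_{uu}\bell &= p(p-1)u^{p-2}+2\delta v^{2-q}, \\
\partial^2_{uv}\bell &= 2\delta(2-q)u v^{1-q}, \\
\partial^2_{vv}\bell &= q(q-1)v^{q-2}-\delta(2-q)(q-1)u^2 v^{-q}.
\end{align*}
Subtracting the target quadratic form $\delta(v^{2-q}\alpha^2+v^{q-2}\beta^2)$ from the left-hand side of \eqref{eq:second-derivative-bellman}, one is reduced to proving positive semi-definiteness of the symmetric matrix $\tilde H$ with entries
\[
\tilde H_{11}=p(p-1)u^{p-2}+\delta v^{2-q}, \quad \tilde H_{12}=2\delta(2-q)uv^{1-q}, \quad \tilde H_{22}=7\delta v^{q-2}-\delta(2-q)(q-1)u^2 v^{-q},
\]
where the coefficient $7\delta$ arises from $q(q-1)-\delta=7\delta$. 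The inequality is invariant under $(u,v,\alpha,\beta) \mapsto (\lambda u, \lambda^{p-1}v, \lambda^{-(p-2)/2}\alpha, \lambda^{(p-2)/2}\beta)$, a scaling that reflects homogeneity of $\bell$ of degree $p$ under $u\to\lambda u$, $v\to\lambda^{p-1}v$ on $\Omega_2$; hence one may normalize $v=1$ and $u\in(0,1]$. A direct expansion using $(p-1)(q-1)=1$ then produces
\[ \det\tilde H = \delta\bigl[\,7p(p-1)u^{p-2}-p(2-q)u^p + 7\delta - \delta(2-q)(7-3q)u^2\,\bigr]. \]

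The main obstacle is showing this bracket is non-negative; the plan is to group its four terms into two manifestly non-negative pairs. For the first pair, $u\le 1$ and $p\ge 2$ imply $u^p\le u^{p-2}$, so $7p(p-1)u^{p-2}-p(2-q)u^p\ge pu^{p-2}\bigl[7(p-1)-(2-q)\bigr]\ge 0$ since $p\ge 2$ and $q>1$. For the second pair, a short calculus check on $q \mapsto (2-q)(7-3q)=3q^2-13q+14$ shows this quantity is decreasing on $(1,2]$ with maximum $4$ attained at $q=1$; combined with $u^2\le 1$ this yields $7\delta - \delta(2-q)(7-3q)u^2\ge 3\delta\ge 0$. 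A parallel argument (using $(2-q)(q-1)\le 1/4$ on $q\in(1,2]$) shows $\tilde H_{11},\tilde H_{22}\ge 0$, and positive semi-definiteness of $\tilde H$ follows, completing the proof.
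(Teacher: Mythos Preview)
Your argument is correct. For the first-derivative bounds \eqref{eq:first-derivative-bellman} you proceed exactly as the paper does, by the same case split and the same elementary inequalities (your phrasing for $v^{-1}\partial_v\bell$ on $\Omega_2$ is slightly loose---the summand $qv^{q-2}$ is not literally $2\delta v^{q-2}$ but dominates it since $q\ge 2\delta$---yet the logic is fine).

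For the Hessian inequality \eqref{eq:second-derivative-bellman} you go further than the paper, which simply refers to \cite[p.~60]{nazarov_hunt_1996}. Your homogeneity reduction to $v=1$, followed by the explicit determinant computation
\[
\det\tilde H=\delta\bigl[\,7p(p-1)u^{p-2}-p(2-q)u^p+7\delta-\delta(2-q)(7-3q)u^2\,\bigr]
\]
and the pairing of terms, is a clean self-contained verification; the algebra checks out (including the crucial identity $(p-1)(q-1)=1$ used to simplify the cross term, and the bounds $(2-q)(7-3q)\le 4$ and $(2-q)(q-1)\le 1/4$ on $q\in(1,2]$). The advantage of your approach is that it makes the paper independent of the cited computation; the cost is only a modest amount of explicit algebra.
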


Inequality \eqref{eq:second-derivative-bellman} shows that $\bell$ is convex in the region $\cur{u^p \le v^q}$. Using \eqref{eq:bound-bad-good-bellman} and the fact that $\bell$ is $C^1$, it follows easily that $\bell$ is globally convex. Moreover, one could also prove that $\bell \in W^{2,1}_{loc}((0, \infty)^2)$, i.e., its distributional derivative actually coincides with the a.e.\ defined derivative (but we do not need this fact).

\begin{proof}
Taking into account the explicit formulas for $\partial_u\bell$ and $\partial_v\bell$ written above, it is easy to see that, on the one hand, in $\{u^p > v^q\}$ it holds
\[
u^{-1}\partial_u\bell\bra{u,v} = (p+2\delta)u^{p-2} \geq (p+2\delta)v^{q(1-2/p)} = (p+2\delta)v^{2-q} \geq 2\delta v^{2-q}
\]
and in $\{u^p < v^q\}$
\[
u^{-1}\partial_u\bell\bra{u,v} = pu^{p-2} + 2\delta v^{2-q} \geq 2\delta v^{2-q},
\]
whence the first lower bound in \eqref{eq:first-derivative-bellman}. On the other hand, in $\{u^p > v^q\}$ we have
\[
v^{-1}\partial_v\bell\bra{u,v} = (2\delta + q(1-\delta))v^{q-2} \geq 2\delta v^{q-2}
\]
since $p \geq 2$ and thus $\delta \in (0,1)$, while in $\{u^p < v^q\}$
\[
v^{-1}\partial_v\bell\bra{u,v} = qv^{q-2} + \delta(2-q)u^2 v^{-q} \geq qv^{q-2},
\]
as $q \leq 2$, and since $q \geq 8\delta$ by the very definition of $\delta$, $v^{-1}\partial_v\bell\bra{u,v} \geq 2\delta v^{q-2}$ follows and thus also the second lower bound in \eqref{eq:first-derivative-bellman}.

For the second part, see \cite[p. 60]{nazarov_hunt_1996}.
\end{proof}

For technical reasons, we introduce the following approximate versions of $\bell$, for $\veps \in (0,1]$, using a smooth non-negative function $\varrho_\varepsilon$ supported on $(\veps, 2\veps)$ with unit integral:
\[ 
\bell_\varepsilon\bra{u,v} := \int_{(\veps, 2\veps)} \bell(u+u',v+v') \varrho_\varepsilon(u')\varrho_{\varepsilon}(v') \de u' \de v',
\]
so that $\bell_{\varepsilon}$ is smooth on $[0,\infty)^2$. In particular, we gain regularity even for $v=0$, at the price of the following  analogue of inequality \eqref{eq:bellman-upper-bound} for $\bell_{\varepsilon}$: 
\begin{equation}\label{eq:bellman-upper-bound-worse} 
\bell_{\varepsilon}(u,v)  \le \bra{1 + \frac 2 p} \sqa{u}_\veps^p   + \frac 2 q \sqa{v}_\veps^q 
\end{equation}
having defined, for any $r \in \R$, $u \ge 0$,
\begin{equation}\label{eq:notation-ugly-eps} 
\sqa{u}_\veps^r := \int_{\veps}^{2 \veps} (u+u')^r \varrho_\veps(u') \de u'.
\end{equation}
Similarly, we have the analogue of \eqref{eq:bellman-derivatives-upper-bound},
\begin{equation}
\label{eq:bellman-derivatives-upper-bound-approx}
\partial_u \bell_\veps(u,v) \le p(1+\delta) \sqa{u}^{p-1}_\veps + 2 \delta \sqa{v}_\veps, \quad \partial_v\bell_\veps(u,v) \le (q+\delta (2-q)) \sqa{v}^{q-1}_\veps
\end{equation}   
Moreover, it is not difficult to directly check that the following analogues of \eqref{eq:first-derivative-bellman} and \eqref{eq:second-derivative-bellman} hold, for every $u$, $v\ge0$,
\begin{equation}\label{eq:first-derivative-bellman-approx} u^{-1} \partial_u \bell_\veps\bra{u,v} \ge \delta \sqa{v}_\veps^{2-q}, \qquad  v^{-1} \partial_v \bell_\veps\bra{u,v} \ge \delta \sqa{v}_\veps^{q-2},
\end{equation}
\begin{equation}
\label{eq:second-derivative-bellman-approx}
\partial_{uu}^2 \bell_{\varepsilon} \bra{u,v} \alpha^2 + 2 \partial_{uv}^2\bell_{\varepsilon}\bra{u,v} \alpha \beta + \partial_{vv}^2 \bell_{\varepsilon}\bra{u,v} \ge  \delta  \bra{ \sqa{v}_\veps^{2-q} \alpha^2 + \sqa{v}_\veps^{q-2} \beta^2 } 
\end{equation}
for every $\alpha, \beta \in \R$ (again for every $u$, $v\ge0$). Finally, we notice that the inequality $x^2 + y^2 \ge 2 |x y|$ entails the lower bound
\begin{equation}\label{eq:inequality-convexity-multiplicative} 
\begin{split} 
\sqa{v}_\veps^{2-q} \alpha^2 + \sqa{v}_\veps^{q-2} \beta^2 & = \int_{\veps}^{2 \veps} \bra{(v+v')^{2-q} \alpha^2 + (v+v')^{q-2} \beta^2}  \varrho_{\veps}(v') \de v' \\
& \ge 2 \int_{\veps}^{2 \veps} \abs{\alpha}\abs{\beta} \varrho_{\veps}(v') \de v' = 2 \abs{\alpha} \abs{\beta}.
\end{split}
\end{equation}

\section{Proof of \cref{thm:main}} \label{sec:main}

In this section we prove our main result, \cref{thm:main}. This can be obtained by the boundedness on $L^p$ spaces, $p \in (1, \infty)$ of the Riesz transform 
\[ 
\cR^a = \d (a + \cL)^{-1/2}
\]
on any $\RCD(K,\infty)$, for any $a\ge K_-$. This operator, often also called \emph{local}  Riesz transform if $a>0$, is initially defined on the image of the Sobolev space $W^{1,2}(\sfX)$ with respect to the map $(a+\cL)^{1/2}$, but it extends at once to a bounded operator with respect to convergence in $L^2(\sfX)$, with domain given by the  $L^2(\sfX)$-closure of the original domain. 
\begin{remark}\label{rem:domain}
Let us notice that this domain also  also coincides with $\operatorname{Ker}(a+\cL)^{\perp}$, because of the spectral decomposition
\[
L^2(\sfX) = \overline{ \sfR(  (a+\cL)^{1/2} )} \oplus \operatorname{Ker}(a+\cL).
\]
Furthermore, if $a>0$, then $\operatorname{Ker}(a+\cL)^{\perp} = L^2(\sfX)$, while if $a=0$ the space $\operatorname{Ker}(\cL)^{\perp}$ is either trivial (if $\mm$ is infinite) or contains only the constant functions (if $\mm$ is finite), since any element $f \in D(\cL)$ such that $\cL f = 0$ must have in particular $|\d f| = 0$, $\mm$-a.e.\ hence by locality of the differential $f$ is $\mm$-a.e.\ equal to a constant.
\end{remark}

To prove that $\cR^a$ is $L^2(\sfX)$-bounded, we begin by writing $f = (a+\cL)^{1/2} g$ and assuming for simplicity that $g \in D(\cL)$, an integration by parts and application of the spectral theorem then give
\[ 
\begin{split}
\int_{\sfX} | \cR^a f|^2 \de \mm  & = \int_{\sfX} | \d g |^2 \de \mm  =  \int_{\sfX} g \cL g = \int_0^\infty \lambda \de \ang{E_\lambda g, E_\lambda g} \\
& \le  \int_0^\infty (a+\lambda) \de \ang{E_\lambda g, E_\lambda g} =  \int_{\sfX} | (a +\cL)^{1/2} g |^2 \de \mm = \int_{\sfX } |f|^2 \de \mm.
\end{split}
\]
The restriction on $g$ can be removed by density, hence the $L^2(\sfX)$-boundedness follows.

As in \cite{carbonaro_bellman_2013}, the strategy of proof relies on Poisson semigroup interpolations, duality, and a bilinear embedding exploiting the growth and convexity properties of the Bellman function. The precise results are summarized in the following two propositions, from which \cref{thm:main} follows at once.

\begin{proposition}[integral representation]\label{lem:integral-rep}
The following identity holds, for every $f \in \sfR(a+\cL)$ and $\omega \in L^2(T\sfX)$:
\[ \int_{\sfX} \ang{ \cR^a  f, \omega} \de \mm  = 4 \int_0^\infty \int_\sfX \ang{ \d  \sfP_t^{a}  f,  \partial_t  \vec \sfP_t^{a} \omega} \de \mm \,  t \de t. \]
\end{proposition}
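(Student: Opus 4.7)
The plan is a purely spectral-calculus argument. At its core lies the elementary scalar identity
\[
\int_0^\infty t\,c\,e^{-2tc}\,\de t \;=\; \frac{1}{4c}, \qquad c>0,
\]
which, applied by functional calculus to the positive self-adjoint operator $\vec\cL^a$ on $L^2(T^*\sfX)$, lifts to the operator identity
\[
4\int_0^\infty t\,(\vec\cL^a)^{1/2}\vec\sfP^a_{2t}\,\de t \;=\; (\vec\cL^a)^{-1/2}
\]
on $\overline{\sfR((\vec\cL^a)^{1/2})}$. The goal is to manipulate the right-hand side of the claim so that precisely this combination appears applied to $\d f$, and then to identify $(\vec\cL^a)^{-1/2}\d f$ with $\d (a+\cL)^{-1/2}f = \cR^a f$ via the form-function commutation.

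Concretely, I would first use \eqref{eq:d-pt-vec} to write $\d\sfP^a_t f = \vec\sfP^a_t\d f$, and the spectral definition of $\vec\sfP^a_t$ to write $\partial_t\vec\sfP^a_t\omega = -(\vec\cL^a)^{1/2}\vec\sfP^a_t\omega$. Using self-adjointness and mutual commutativity of $(\vec\cL^a)^{1/2}$ and $\vec\sfP^a_t$ on $L^2(T^*\sfX)$ (they share a common spectral resolution), together with the semigroup property $\vec\sfP^a_t\circ\vec\sfP^a_t = \vec\sfP^a_{2t}$, the integrand reduces to $-\langle (\vec\cL^a)^{1/2}\vec\sfP^a_{2t}\d f,\omega\rangle$. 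Interchanging $\int_0^\infty t\,\de t$ with the inner product and the unbounded operator, and invoking the operator identity above, produces $-\langle(\vec\cL^a)^{-1/2}\d f,\omega\rangle$. A final application of the commutation $(\vec\cL^a)^{-1/2}\d = \d(a+\cL)^{-1/2}$ on $\sfR(a+\cL)$ — obtained by integrating \eqref{eq:d-pt-vec} in $t$ against the Poisson subordination kernel, since $\int_0^\infty\sfP^a_t\,\de t = (a+\cL)^{-1/2}$ and similarly for the vector semigroup on the orthogonal complements of the respective kernels — identifies the result with $\langle\cR^a f,\omega\rangle$ up to an overall sign to be tracked consistently through the computation.

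The main analytic obstacle is the rigorous justification of the interchange of $\int_0^\infty t\,\de t$ with the $L^2$-inner product and with the unbounded operator $(\vec\cL^a)^{1/2}$. I plan to proceed by truncation to $t\in(\varepsilon,1/\varepsilon)$, where $t\mapsto\vec\sfP^a_t\omega$ is smooth as an $L^2(T^*\sfX)$-valued map and takes values in arbitrary powers of $D(\vec\cL^a)$ by the analyticity of the Poisson semigroup (recalled after the Bakry--\'Emery bounds) together with \cref{lem:poisson}; all algebraic manipulations are then licit on the truncated integral, and one passes to the limit $\varepsilon\downarrow 0$. Uniform integrability is furnished by two elementary facts: near $t=0$, the scalar factor $t\sqrt{\mu}\,e^{-2t\sqrt{\mu}}$ is uniformly bounded for $\mu\ge 0$, so the integrand is dominated in $L^2\times L^2$ by a constant multiple of $\|\d f\|_2\|\omega\|_2$ via the contractivity in \cref{lem:poisson}; near $t=\infty$, the assumption $f\in\sfR(a+\cL)$ combined with the commutation $\d(a+\cL) = (a+\vec\cL)\d$ gives $\d f\in\sfR(\vec\cL^a)$, so the spectral support of $\d f$ is bounded away from $0$ and exponential decay of the same factor takes over. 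The same estimates simultaneously ensure that the boundary terms arising in the integration-by-parts in $t$ — which is the analytic backbone of the scalar operator identity — vanish at both endpoints.
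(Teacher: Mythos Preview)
Your spectral-calculus packaging is essentially the paper's argument rephrased: the paper introduces $\varphi(t)=\int_\sfX\langle\vec\sfP^a_t\cR^a f,\vec\sfP^a_t\omega\rangle\,\de\mm$ and applies the integration-by-parts identity $\varphi(0)=\int_0^\infty\varphi''(t)\,t\,\de t$ (justified via the truncated version \eqref{eq:ibp-rigorous}), which after computing $\varphi''$ is precisely your scalar identity $\int_0^\infty t\,c\,e^{-2tc}\,\de t=1/(4c)$ lifted to $\vec\cL^a$. Both routes need the hypothesis $f\in\sfR(a+\cL)$ solely to control the endpoint $t\to\infty$, and both face the same sign bookkeeping you flagged.

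There is one genuine slip in your large-$t$ justification. The implication ``$\d f\in\sfR(\vec\cL^a)$, so the spectral support of $\d f$ is bounded away from $0$'' is false: membership in the range of an unbounded self-adjoint operator places no constraint on spectral support (when $a=0$ and the spectrum of $\vec\cL$ is continuous down to $0$, elements of $\sfR(\vec\cL)$ typically carry spectral mass arbitrarily close to $0$). What the hypothesis actually buys is the representation $\d f=\vec\cL^a\eta$ with $\eta=\d g\in L^2(T^*\sfX)$ for $g\in D(\cL^a)$, and then spectral calculus gives
\[
\bigl\|(\vec\cL^a)^{1/2}\vec\sfP^a_{2t}\d f\bigr\|_2^2 \;=\; \int_0^\infty \mu^3 e^{-4t\sqrt\mu}\,\de\langle\vec E_\mu\eta,\eta\rangle \;\le\; t^{-6}\Bigl(\sup_{z\ge 0}z^6 e^{-4z}\Bigr)\|\eta\|_2^2,
\]
so the full integrand is $O(t^{-2})$ at infinity --- which, combined with your (correct) uniform bound near $t=0$, gives the integrability you need. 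This is exactly the estimate the paper records (for the scalar semigroup) in \eqref{eq:decay-at-infinity-derivative}; replace your spectral-support claim by it and your argument goes through.
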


\begin{proposition}[bilinear embedding]\label{lem:bilinear-emb}
Let $p,q \in (1, \infty)$ be conjugate exponents. Then $f \in L^p(\sfX)$, $\omega \in L^q(T \sfX)$,
\[ \int_{0}^\infty \int_\sfX  | \overline{\nabla} \sfP_t^{a} f | |\overline{\nabla} \vec \sfP_t^{a} \omega|  \de \mm \,  t \de t\le 4\max\cur{p, q} \nor{f}_p \nor{\omega}_q.\] 
\end{proposition}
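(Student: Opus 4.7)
The plan is a Bellman-function argument in the spirit of \cite{nazarov_hunt_1996,carbonaro_bellman_2013}, adapted to the $\RCD$ setting via the chain-rule inequality \cref{lem:convexity}. By exchanging $f\leftrightarrow\omega$ and $p\leftrightarrow q$ I may assume $p\ge 2$ and $q\le 2$; by density of bounded functions in $L^p$, I may further assume $f\in L^2\cap L^\infty(\sfX)$ and $\omega\in L^2\cap L^\infty(T^*\sfX)$. Set $f_t:=\sfP_t^a f$, $\omega_t:=\vec\sfP_t^a\omega$, fix a nonnegative weight $w\in W^{1,2}\cap L^\infty(\sfX)$, and consider the regularized weighted energy
\[
\mathcal{E}_{\veps,w}(t):=\int_\sfX \bell_\veps(|f_t|,|\omega_t|)\,w\,\de\mm.
\]

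The first key step is to apply \cref{lem:convexity} with $d=2$, $\eta=(f_t,\omega_t)$ and $F=\bell_\veps$ (which is smooth, globally convex with nonnegative partials on $[0,\infty)^2$ by \cref{lem:bellman-convexity} and the remark following it). Combining the Hessian lower bound \eqref{eq:second-derivative-bellman-approx} (applied to the space-space terms via \cref{lem:positive-form} and directly to the time-time ones), the perpendicular lower bound \eqref{eq:first-derivative-bellman-approx} on the form perpendicular terms, and the AM-GM-type inequality \eqref{eq:inequality-convexity-multiplicative}, the right-hand side of \eqref{eq:convexity} is controlled pointwise from below by
\[
\delta\bigl(\sqa{|\omega_t|}_\veps^{2-q}|\overline\nabla f_t|^2+\sqa{|\omega_t|}_\veps^{q-2}|\overline\nabla\omega_t|^2\bigr)w\ \ge\ 2\delta\,|\overline\nabla f_t|\,|\overline\nabla\omega_t|\,w,
\]
yielding the distributional inequality
\[
\partial_t^2\mathcal{E}_{\veps,w}(t)\ \ge\ 2\delta\!\int_\sfX\!|\overline\nabla f_t||\overline\nabla\omega_t|\,w\,\de\mm\ +\!\int_\sfX\!\ang{\d\bell_\veps(|f_t|,|\omega_t|),\d w}\,\de\mm \quad\text{on }(0,\infty).
\]

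Multiplying by $t$ and integrating on $(0,T)$, an integration by parts yields $\int_0^T\partial_t^2\mathcal E_{\veps,w}\,t\,\de t=T\partial_t\mathcal E_{\veps,w}(T)-\mathcal E_{\veps,w}(T)+\mathcal E_{\veps,w}(0)$. Using \cref{lem:poisson}(ii) together with the growth controls \eqref{eq:bellman-upper-bound-worse}--\eqref{eq:bellman-derivatives-upper-bound-approx}, show that the boundary terms at $T\to\infty$ vanish, so that
\[
2\delta\!\int_0^\infty\!\!\!\int_\sfX\!|\overline\nabla f_t||\overline\nabla\omega_t|\,w\,\de\mm\,t\,\de t\ +\!\int_0^\infty\!\!\!\int_\sfX\!\ang{\d\bell_\veps,\d w}\,\de\mm\,t\,\de t\ \le\ \mathcal E_{\veps,w}(0).
\]
Now pick $w=g_n$ from \cref{rem:conservative}, so that $g_n\uparrow 1$ and $|\d g_n|\to 0$ weakly in $L^2$: this eliminates the weight in the limit. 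Bound $\mathcal E_{\veps,w}(0)\le(1+2\delta/p)\|f\|_p^p+(1+\delta(2/q-1))\|\omega\|_q^q$ by a $\delta$-tightened version of \eqref{eq:bellman-upper-bound-worse}, pass to $n\to\infty$ then $\veps\to 0$, and finally apply the homogenization $f\mapsto\lambda f$, $\omega\mapsto\mu\omega$ with $\lambda,\mu$ optimized to balance the two summands on the right. Using $1/p+1/q=1$, $q(p-1)=p$ and $\delta=q(q-1)/8$, a direct computation turns the additive bound into the product bound with constant $\le 4\max\{p,q\}$.

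The main difficulties are analytic rather than conceptual: (i) verifying that the boundary term at $t=\infty$ actually vanishes in the worst case $a=0$ (which can occur if $K\ge 0$), where $\sfP_t^0$ is no longer $L^p$-dissipative, requires either a spectral restriction on $f$ inherited from \cref{lem:integral-rep} (namely $f\in\sfR((a+\cL)^{1/2})$) or an approximation by $a>0$; (ii) proving that $\int_0^\infty\int_\sfX\ang{\d\bell_\veps,\d g_n}\,\de\mm\,t\,\de t\to 0$ as $n\to\infty$ demands uniform-in-$t$ $L^2(T^*\sfX)$-estimates on $\d\bell_\veps(|f_t|,|\omega_t|)$ on bounded time intervals (via \eqref{eq:bellman-derivatives-upper-bound-approx} and the $L^\infty\cap W^{1,2}$ regularity of $f_t,\omega_t$), combined with adequate tail decay as $t\to\infty$.
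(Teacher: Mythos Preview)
Your approach is essentially that of the paper: apply \cref{lem:convexity} with $F=\bell_\veps$ to the weighted energy, combine the Hessian and first-derivative lower bounds of the Bellman function to extract $2\delta\,|\overline\nabla f_t||\overline\nabla\omega_t|$, integrate against $t\,\de t$, use the weights $w=g_n$ from \cref{rem:conservative} to handle the infinite-measure case, and conclude by homogeneity. The identified difficulties (i) and (ii) are exactly the ones the paper addresses, the first by restricting (by density) to $f\in\sfR(\cL^a)$, $\omega\in\sfR(\vec\cL^a)$ so that spectral decay kills the boundary terms at $T\to\infty$.

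There is, however, one genuine gap. Your claimed bound
\[
\mathcal E_{\veps,w}(0)\le (1+2\delta/p)\|f\|_p^p+(1+\delta(2/q-1))\|\omega\|_q^q
\]
is false for $\veps>0$: the regularization shifts the arguments by at least $\veps$, so $\bell_\veps(0,0)>0$, and the correct version \eqref{eq:bellman-upper-bound-worse} produces an additional term of order $\veps^q\|w\|_1$. Consequently your stated order of limits ($n\to\infty$ then $\veps\to 0$) fails when $\mm(\sfX)=\infty$, since $\|g_n\|_1\to\infty$. The paper fixes this by reversing the order: with $n$ fixed one sends $\veps\to 0$ first (so $\bell_\veps\to\bell$ with $\bell(0,0)=0$ and the spurious term $c\veps\|g_n\|_1$ disappears), and only then $n\to\infty$ to remove the weight. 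This reordering also dictates how the $\d g_n$ term is handled: one first replaces $\d\bell_\veps$ by $\d\bell$ in $W^{1,2}$, and then uses the weak $L^2$-convergence $|\d g_n|\to 0$ together with the decay \eqref{eq:decay-at-infinity-derivatives-proof-bellman} for dominated convergence in $t$. With this correction your sketch matches the paper's proof.
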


Before we prove the two propositions, let us briefly recall how \cref{thm:main} follows from them.

\begin{proof}[Proof of \cref{thm:main}]
Combining \cref{lem:integral-rep} and \cref{lem:bilinear-emb} yields
\[ \int_{\sfX} \ang{ \cR^a  f, \omega} \de \mm \le 16 \max\cur{p, q} \nor{f}_p \nor{\omega}_q.\]
If we consider the supremum over $\omega \in L^2\cap L^q(T^*\sfX)$ with $\nor{\omega}_q \le 1$, by duality we deduce that
\begin{equation}\label{eq:riesz-lp} \nor{ \cR^a f}_p \le 16 \max\cur{p,q} \nor{f}_p, \quad \text{for every $f \in \sfR(a+\cL)$,}\end{equation}
with $q = p/(p-1)$. By density and \cref{rem:domain} above, the inequality extends to $f \in \operatorname{Ker}(\cL)^{\perp}$.

Writing $f=(a +\cL)^{1/2}g$ for some $g \in W^{1,2}(\sfX)$, we find
\begin{equation}
\label{eq:riesz-one-side} \nor{ \d g}_p \le 16 \max \cur{p,q} \| (a +\cL)^{1/2} g \|_p, \quad \text{for every $g \in W^{1,2}(\sfX)$.}
\end{equation}
To obtain the converse inequality, we argue by duality. Consider first the case $a>0$. Then,
\[
\begin{split} 
\| (a +\cL)^{1/2} f\|_p & = \sup_{g \in L^2(\sfX) \,:\, \nor{g}_q \le 1} \int_{\sfX}  g  (a+\cL)^{1/2}f  \de \mm \\
& = \sup_{h \,:\, \nor{(a+\cL)^{1/2} h }_q \le 1} \int_{\sfX}  ((a +\cL)^{1/2} h) ( (a +\cL)^{1/2}f ) \de \mm 
\end{split}
\]
where the second equality follows because to any $g \in L^2 \cap L^q(\sfX)$ we can associate
\[ 
h = (a+\cL)^{-1/2} g = \int_0^\infty \sfP^a_t h \,\de t,
\]
which is well defined as an element of $L^2 \cap L^q(\sfX)$, with
\begin{equation}\label{eq:nor-h-nor-g} \nor{ h }_q \le \frac {1}{\sqrt{a}} \nor{ g }_q\end{equation}
as an application of \autoref{lem:poisson} and \eqref{eq:integration-rho-at}. Therefore, if $\nor{g}_q \le 1$, we obtain
\[ 
\begin{split}
\int_{\sfX}  ((a +\cL)^{1/2} h)  ( (a +\cL)^{1/2}f ) \de \mm  &  = \int_{\sfX} h( a f +\cL f ) \de \mm \\
& = \int_{\sfX} a fh + \ang{\d h, \d f}\de \mm \\
& \le a \nor{f}_p \nor{h}_q + \nor{\d f}_p \nor{\d h}_q \\
& \le \sqrt{a}\nor{f}_p + 16 \max\cur{p,q} \nor{\d f}_p,
\end{split}
\]
having used \eqref{eq:nor-h-nor-g}, and \eqref{eq:riesz-one-side} with $q$ instead of $p$ and $h$ instead of $f$. For the  case $K\ge 0$ we need t $a=0$, we simply let $a \to 0^+$ and notice that, for $f \in W^{1,2}(\sfX) $, by spectral calculus we have the  convergence in $L^2(\sfX)$:
\[ (a +\cL)^{1/2} f \to \cL^{1/2} f \quad \text{as $a \to 0^+$.}\]
Hence, by Fatou's lemma we obtain
\[ 
\begin{split}
\nor{\cL^{1/2} f}_p & \le \liminf_{a \to 0^+} \nor{(a +\cL)^{1/2} }_p  \le \liminf_{a \to 0^+} \sqrt{a}\nor{f}_p + 16 \max\cur{p,q} \nor{\d f}_p \\
& = 16 \max\cur{p,q} \nor{\d f}_p,
\end{split}
\]
provided that also $f \in L^p(\sfX)$. However, this condition can be later removed by a density argument hence also the second inequality in \cref{thm:main} is proved.
\end{proof}

The proof of \cref{lem:integral-rep} and  \cref{lem:bilinear-emb}  rely on the formal integration by parts identity
\begin{equation}\label{eq:ibp} \varphi(0) = \int_0^\infty \varphi''(t) \, t \de t.\end{equation}
More rigorously, we will let $\delta \to 0$, $M\to \infty$ in the identity, valid for functions $\varphi \in C^2(0, \infty)$,
\begin{equation}\label{eq:ibp-rigorous} \varphi(\delta) = \int_{\delta}^{M} \varphi''(t) \, t \de t  + \varphi(M) + \varphi'(\delta)\delta - \varphi'(M)M. \end{equation}
In the proof of \cref{lem:bilinear-emb} we relax  the identity \eqref{eq:ibp-rigorous} to the inequality
\begin{equation}\label{eq:ibp-rigorous-inequality} \varphi(\delta) \ge \int_{\delta}^{M} \psi(t) \, t \de t  + \varphi(M) + \varphi'(\delta)\delta - \varphi'(M)M, \end{equation}
that holds (by approximation) e.g.\ if $\varphi$ is locally Lipschitz, $\delta$, $M$ are differentiability points for $\varphi$, and $\psi \in C(0, \infty)$ is such that $\varphi'' \ge \psi$ in the sense of distributions on $(0, \infty)$. 

%

\begin{proof}[Proof of \cref{lem:integral-rep}]
To keep the notation simple, we provide the proof only in the $a=0$ case, the general case being analogous. Given $f = \cL g$ for some $g \in D(\cL)$, we consider the continuous function
\[ 
\varphi(t) := \int_{\sfX} \ang{ \vec \sfP_t \cR  f, \vec \sfP_t \omega} \de \mm, \quad t \in [0, \infty),
\]
where we write $\sfP_t = \sfP^0_t$, $\vec\sfP_t = \vec \sfP^0_t$ and $\cR=\cR^0$. 
Using \eqref{eq:d-pt-vec}, we have the commutation
\[  \vec \sfP_t \cR  f = \d \sfP_t \cL^{-1/2} f = \cR \sfP_tf\]
hence
\[\begin{split}  \varphi(t) & =  \int_{\sfX} \ang{ \vec \sfP_t \cR f, \vec \sfP_t \omega} \de \mm =  \int_{\sfX} \ang{ \sfP_t \cL^{-1/2} f,  \d ^* \vec \sfP_t \omega} \de \mm   = \int_{\sfX} \ang{ \sfP_t \cL^{-1/2} f,   \sfP_t \d^* \omega} \de \mm \\
& = \int_{\sfX} \ang{ \sfP_{2t} \cL^{-1/2} f,  \d^* \omega} \de \mm. \end{split}\]
Differentiating twice, we obtain the equivalent expressions
\[ \varphi'(t) =  2 \int_{\sfX} \ang{ \sfP_{2t}  f,  \d^* \omega} \de \mm = 2 \int_{\sfX} \ang{   \d f,  \vec \sfP_{2t} \omega} \de \mm\]
\[ \varphi''(t) = 4 \int_{\sfX} \ang{ \sfP_{2t}  \cL^{1/2} f,  \d^* \omega} = 4 \int_{\sfX} \ang{   \d  \sfP_t f,   \partial_t \vec \sfP_{t} \omega} \de \mm .\]
Therefore, it is sufficient to justify the limit as $\delta\to 0$, $M\to \infty$ in \eqref{eq:ibp-rigorous}. 

The assumption $f = \cL g$ together with the spectral theorem give
\[ 
\nor{  \sfP_{2t}  \cL^{-1/2} f}_2^2 = \nor{  \sfP_{2t}  \cL^{1/2} g}_2^2 = \int_0^\infty e^{-4t \lambda^{1/2}} \lambda \de \ang{E_{\lambda}g,E_{\lambda}g} \le t^{-2} \sup_{z \ge 0} e^{-4z}z^2 \nor{g}_2^2, 
\]
so that, by Cauchy-Schwarz inequality, 
\[ 
|\varphi(t)| \le  \nor{  \sfP_{2t}  \cL^{-1/2} f}_2 \nor{\d^* \omega}_2 \to 0 \quad \text{as $t \to \infty$.}
\]
Arguing similarly for the first derivative, we have that 
\begin{equation}\label{eq:decay-at-infinity} 
\nor{  \sfP_{2t}   f}_2^2 = \nor{  \sfP_{2t}  \cL g}_2^2 = \int_0^\infty e^{-4t \lambda^{1/2}} \lambda^2 \de \ang{E_{\lambda}g,E_{\lambda}g} \le t^{-4} \sup_{z \ge 0} e^{-4z}z^4 \nor{g}_2^2, 
\end{equation}
hence
\[ 
|\varphi'(t) t | \le 2 t  \nor{  \sfP_{2t}   f}_2 \nor{\d^* \omega}_2 \to 0 \quad \text{both as $t \to 0$ and as $t \to \infty$.}
\]
Finally, for the second derivative, we have that
\begin{equation}\label{eq:decay-at-infinity-derivative} 
\nor{  \sfP_{2t}  \cL^{1/2} f}_2^2 = \nor{  \sfP_{2t}  \cL^{3/2} g}_2^2 = \int_0^\infty e^{-4t \lambda^{1/2}} \lambda^3 \de \ang{E_{\lambda}g,E_{\lambda}g} \le t^{-6} \sup_{z \ge 0} e^{-4z}z^6 \nor{g}_2^2, 
\end{equation}
as well as inequality
\[ 
\nor{  \sfP_{2t}  \cL^{1/2} f}_2^2 = \int_0^\infty e^{-4t \lambda^{1/2}} \lambda \de \ang{E_{\lambda}f,E_{\lambda}f} \le t^{-2} \sup_{z \ge 0} e^{-4z}z^2 \nor{f}_2^2\]
so that, by Cauchy-Schwarz inequality, 
\[ |\varphi''(t)| \le 4\nor{ \sfP_{2t}  \cL^{1/2} f}_2 \nor{\d^* \omega}_2  = O( \min\cur{t^{-1}, t^{-3} }),
\]
hence $\int_0^\infty |\varphi''(t)| t \de t < \infty$. By dominated convergence, we can take the limit in \eqref{eq:ibp-rigorous} and deduce the thesis. 
\end{proof}

\begin{proof}[Proof of \cref{lem:bilinear-emb}]
As in \cite{carbonaro_bellman_2013},  the  main idea is to apply \eqref{eq:ibp} to
\[ 
\varphi(t) := \int_\sfX \bell ( | f_t|, |\omega_t|)  \de \mm ,
\]
if $p \ge 2$, otherwise we exchange the roles of $f$ and $\omega$, where $f_t = \sfP^a_t f$ and $\omega_t = \vec \sfP^a_t \omega$. Indeed, by \eqref{eq:bellman-upper-bound} we have
\[ 
\varphi(0) \le \frac{2}{p} \nor{f}_{p}^p + \bra{\frac 2 q -1} \nor{\omega}_q^q,
\]
while \cref{lem:convexity} with $F = \bell$ and $\eta^1 = f$, $\eta^2=\omega$ provides a lower bound that, by the monotonicity and convexity properties of $\bell$, yields
\begin{equation}\label{eq:convexity-bellman-integrated} 
\varphi''(t) \ge 2\delta \int_{\sfX}  | \overline{\nabla}  f_t | |\overline{\nabla}  \omega_t| \de \mm.
\end{equation}
By homogeneity, it is sufficient to rescale $f$ and $\omega$ in such a way that $\nor{f}_p = \nor{\omega}_q= 1$ to obtain the thesis.
 
However, the lack of smoothness of $\bell$ prevents a straightforward application of \cref{lem:convexity}, hence we replace it with the regularization $\bell_{\varepsilon}$ defined in \cref{sec:bell}, for $\varepsilon>0$,  and  let  then $\varepsilon\to 0$. But this step raises an integrability issue if the measure $\mm$ is not finite, since \eqref{eq:bellman-upper-bound} should then be replaced by \eqref{eq:bellman-upper-bound-worse}. To overcome this problem we weight the measure $\mm$: we consider a sequence $(w_n)_{n\ge 1} \in L^1\cap W^{1,2}(\sfX)$ as provided in \cref{rem:conservative},  write $\mm_n := w_n \mm$ and argue with $\mm_n$ instead of $\mm$. To conclude, we  let first $\varepsilon\to 0$ and then $n \to \infty$.

For $\veps \in (0,1]$, $n\ge 1$, we apply \eqref{eq:ibp-rigorous-inequality} to the  function
\[ 
\varphi_{\veps, n}(t) := \int_\sfX \bell_{\veps}( |f_t|, |\omega_t|)  \de \mm_n. 
\]
Indeed, by \eqref{eq:bellman-upper-bound-worse}, we have the $\mm$-a.e.\ inequality
\[
\begin{split} 
\bell_{\veps} & \le    \bra{1+\frac{2}{p}} \sqa{|f_t|}_\veps^p  + \frac 2 q \sqa{|\omega_t|}_\veps^q   \le    c \bra{ |f_t|^{p} + |\omega_t|^q + \veps }, 
\end{split} 
\]
where, here and below, $c  \ge 0$ denotes some constant depending on $p$, $q$ only, and $\bell_{\veps}$ together with its derivatives in always evaluated at $(|f_t|, |\omega_t|)$. Integrating with respect to the finite measure $\mm_n$  and using Cauchy-Schwarz inequality it follows that
\begin{equation}
\label{eq:bound-phi-eps-n} \begin{split} |\varphi_{\veps, n}(t)|   & \le  c \bra{ \nor{f_t}_p^{p}  +\nor{\omega_t}_q^{q}  + \veps \nor{w_n}_1}\\
& \le c \bra{ \nor{f}_\infty^{p-1} \nor{f_t}_2 \nor{w_n}_2 + \nor{\omega}_\infty^{q-1} \nor{\omega_t}_2 \nor{w_n}_2 + \veps \nor{w_n}_1}
\end{split}\end{equation}
(all the norms above being with respect to $\mm$). By \cref{lem:poisson}, it follows that $\varphi_{\veps, n}$ is well defined, and an application of dominated convergence yields also continuity on $[0, \infty)$. Arguing as in \eqref{eq:decay-at-infinity}  for $f = \cL^a g$ and analogously for $\omega = \vec \cL^a \eta$, we have that $\lim_{t \to \infty} \nor{f_t}_2 + \nor{\omega_t}_2 = 0$, hence the second inequality in \eqref{eq:bound-phi-eps-n} shows that
\begin{equation}\label{eq:limsup-phi-eps-n} 
\limsup_{t \to \infty} |\varphi_{\veps, n}(t)| \le c \veps \nor{w_n}_1.
\end{equation}
We then notice that $\varphi_{\veps, n}$ is locally Lipschitz in $(0, \infty)$, and arguing as we did above, we write first the $\mm$-a.e.\ inequalities
\begin{equation}\label{eq:inequalities-derivatives} 
|\partial_u \bell_\veps| \le  c  \bra{|f_t|^{p-1} + |\omega_t| + \veps } \qquad  |\partial_v \bell_\veps|  \le c  \bra{ | \omega_t|^{q-1} + \veps},
\end{equation}
which lead to the upper bound (at a.e.\ $t \in (0, \infty)$),
\[
\begin{split} 
|\partial_t \varphi_{\veps, n}(t)| & \le \int_{\sfX} \bra{|\partial_u \bell_\veps| |\partial_t f_t| + |\partial_v \bell_\veps| |\partial_t \omega_t|}  \de \mm_n  \\
&\le  c \bra{  \bra{\nor{f}_\infty^{p-1} + \nor{\omega}_\infty^{p-1} + \veps}\nor{\partial_t f_t}_2\nor{w_n}_2 +  \bra{\nor{\omega}_\infty^{q-1}+\veps} \nor{\partial_t \omega_t}_2 \nor{w_n}_2}
\end{split}.
\]
Arguing as in \eqref{eq:decay-at-infinity-derivative}, it follows that 
\begin{equation}\label{eq:decay-at-infinity-derivatives-proof-bellman}
\limsup_{t \to \infty} t^3 \bra{\nor{ \partial_t f_t}_2 + \nor{\partial_t \omega_t}_2} < \infty,
\end{equation}
hence $\lim_{t \to \infty} t |\varphi_{\veps, n}(t)| =0$.

The key point is then to prove the following analogue of \eqref{eq:convexity-bellman-integrated},  with $\varphi_{\veps, \gamma}$ instead of $\varphi$ and an extra term arising from the introduction of $w_n$:
\begin{equation} \label{eq:convexity-bellman-integrated-approx}
\varphi''_{\veps, \gamma}(t) \ge 2\delta \int_{\sfX}  | \overline{\nabla} f_t | |\overline{\nabla} \omega_t|   \de \mm_n + \int_{\sfX} \ang{\d \bell_{\veps}(|f_t|, |\omega_t|), \d w_n  } \de \mm.
\end{equation}
Let us notice that, by the chain rule and \eqref{eq:inequalities-derivatives},
\begin{equation}\label{eq:bound-nabla-b-eps} 
\nor{ \d \bell_{\veps}(|f_t|, |\omega_t|) }_2 \le c  \bra{  \bra{\nor{f}_\infty^{p-1} + \nor{\omega}_\infty^{p-1} + \veps}\nor{\d f_t}_2 +  \bra{\nor{\omega}_\infty^{q-1}+\veps} \nor{\d \omega_t}_2}. 
\end{equation}
Using that
\[ 
\nor{ \d f_t}_2 = \nor{ \cL^{1/2}f_t }_2 \le \nor{ (a+\cL)^{1/2}f_t }_2 = \nor{\partial_t f_t}_2,
\]
and similarly with $\omega$ instead of $f$, it follows from \eqref{eq:decay-at-infinity-derivatives-proof-bellman}  that the rightmost term in \eqref{eq:convexity-bellman-integrated-approx} is integrable with respect to the measure $t \de t$ on $(0, \infty)$. Therefore, once \eqref{eq:convexity-bellman-integrated-approx} is proved, we can let $\delta \to 0$ and $M \to \infty$ in \eqref{eq:ibp-rigorous-inequality} to deduce
\[ 
|\varphi_{\veps, n} (0) | \ge 2\delta \int_0^\infty \int_{\sfX}  | \overline{\nabla} f_t | |\overline{\nabla} \omega_t|   \de \mm_n t \de t - \int_0^\infty \abs{ \int_{\sfX} \ang{\d \bell_{\veps}(|f_t|, |\omega_t|), \d w_n  } \de \mm} t \de t - c \veps \nor{w_n}_1,
\]
having also used \eqref{eq:limsup-phi-eps-n}. As already mentioned above, we let first $\veps\to 0$ and then $n \to \infty$. Indeed, by the chain rule and  inequalities \eqref{eq:bellman-derivatives-upper-bound-approx} it is not difficult to deduce that, as $\veps\to 0$, \[ \bell_{\veps}(|f_t|, |\omega_t|) \to \bell( |f_t|, |\omega_t|) \quad \text{in $W^{1,2}(\sfX)$,}\]
and that an inequality akin to \eqref{eq:bound-nabla-b-eps} holds (without $\veps$). Moreover, as $n \to \infty$, we have for every $t \in (0, \infty)$,
\[ \int_{\sfX} \ang{\d \bell_{\veps}(|f_t|, |\omega_t|), \d w_n  } \de \mm \to 0,\]
hence by dominated convergence (we repetitively use \eqref{eq:decay-at-infinity-derivatives-proof-bellman}) we conclude that
\[ \lim_{n \to \infty} \lim_{\veps \to 0} \int_0^\infty \abs{ \int_{\sfX} \ang{\d \bell_{\veps}(|f_t|, |\omega_t|), \d w_n  } \de \mm} t \de t =0 .\]
On the other side, 
\[ 
\lim_{n \to \infty} \lim_{\veps \to 0} \varphi_{\veps, n}(0) =\varphi(0),
\]
hence the thesis follows.

We are only left with the proof of  \eqref{eq:bellman-derivatives-upper-bound-approx}. We apply  \cref{lem:convexity} with $F = \bell_{\veps}$, $\eta^1 = f$, $\eta^2=\omega$  and $w = w_n$, which yields
\begin{equation}\label{eq:inequality-temp-main-proof}\begin{split} \varphi''_{\veps, n}(t) \ge  &  \int_{\sfX} \sum_{j,k=1}^2 \partial_{jk}^2 \bell_\veps \bra{ \partial_t^\parallel \eta^j_t  \partial_t^\parallel \eta^k_t + \ang{ \nabla^\parallel \eta^j_t, \nabla ^\parallel\eta^k_t} } w_n \de \mm \\
& + \int_{\sfX} \sum_{j=1}^2  \partial_j \bell_{\veps} |\eta_t^j|^{-1}\bra{ | \partial_t ^{\perp}\eta^j_t |^2 +  | \nabla ^{\perp}\eta^j_t |^2} w_n \de \mm \\
& +  \int_{\sfX} \ang{\d \bell_\veps\bra{ |f_t|, |\omega_t| }, \d w_n }  \de \mm. \end{split}\end{equation}
Since the third term above corresponds to the rightmost one in \eqref{eq:convexity-bellman-integrated-approx}, we focus on the other ones. By  \eqref{eq:second-derivative-bellman-approx} and \cref{lem:positive-form}, we obtain the $\mm$-a.e.\ inequality
\[ 
\begin{split}  
& \sum_{j,k=1}^2 \partial_{jk}^2 \bell_\veps \bra{ \partial_t^\parallel \eta^j_t  \partial_t^\parallel \eta^k_t + \ang{ \nabla^\parallel \eta^j_t, \nabla ^\parallel\eta^k_t} } \\
& \quad \ge \delta\bra{[|\omega_t|]_\veps^{2-q} \bra{ |\partial^\parallel_tf_t|^2 + |\nabla^\parallel f_t|^2} + [|\omega_t|]_\veps^{q-2} \bra{ |\partial^\parallel_t  \omega_t|^2 + |\nabla^\parallel \omega_t|^2} }.
\end{split} 
\]
By \eqref{eq:first-derivative-bellman-approx} it follows that
\[ 
\begin{split} 
& \sum_{j=1}^2  \partial_j \bell_{\veps} |\eta_t^j|^{-1}\bra{ | \partial_t ^{\perp}\eta^j_t |^2 +  | \nabla ^{\perp}\eta^j_t |^2} \\
& \quad \ge \delta \bra{[|\omega_t|]_\veps^{2-q} \bra{ |\partial^\perp_tf_t|^2 + |\nabla^\perp f_t|^2} + [|\omega_t|]_\veps^{q-2} \bra{ |\partial^\perp_t  \omega_t|^2 + |\nabla^\perp \omega_t|^2} }.
\end{split}
\]
Summing the two inequalities and using \eqref{eq:inequality-convexity-multiplicative}, we deduce that the first two integrals in \eqref{eq:inequality-temp-main-proof} are bounded from below by
\[ 
\delta \int_{\sfX}  | \overline{\nabla} f_t | |\overline{\nabla} \omega_t|  w_n \de \mm,
\]
hence \eqref{eq:convexity-bellman-integrated-approx} is proved.
\end{proof}

\section{Proof of \cref{cor:lusin}}\label{sec:lusin}

In this section we briefly recall how a Lusin-type approximation for Sobolev by Lipschitz functions on $\RCD(K, \infty)$ spaces has been obtained in \cite{ambrosio_lusin-type_2018} and show that \cref{thm:main} allows to extend the result from $p=2$ to general $p\in (1, \infty)$. In fact, \cite[Theorem 4.1]{ambrosio_lusin-type_2018} already ensures that, given $\alpha \in (1,2]$ there exists $c = c(\alpha, K)<\infty$  such that the following holds: for any $f \in W^{1,2}(\sfX)$ there exists an $\mm$-negligible set $N\subset \sfX$ such that, for every $x,y\in \sfX \setminus N$ with $\dist(x,y)\leq 1/ K_-^2$ (to be understood as $+\infty$ if $K_-=0$), 
\[
|f(x)-f(y)| \leq c \dist(x,y) (g(x)+g(y)),
\]
where
\begin{equation}\label{eq:g-abt} 
g(x) = \sup_{s>0} (\sfh_s|\d f|^\alpha(x))^{1/\alpha} + \sup_{s>0} \sfh_s | \sqrt{- \Delta} f| (x).
\end{equation}
This bound is already sufficient to cover the case $K \ge 0$. Indeed, let $p \in (1, \infty)$ and assume that $f \in W^{1,p}(\sfX)$. Then, $|\d f |^{\alpha} \in L^{p/\alpha}(\sfX)$ and by \cref{thm:main}, $\sqrt{- \Delta} f \in L^p(\sfX)$. Thus if we restrict to $\alpha < p$, by the maximal theorem for semigroups, see e.g.\ \cite[p.\ 65]{stein1970topics}, we deduce that $g \in L^p(\sfX, \mm)$.

If $K <0$, we need to slightly modify the argument leading to \eqref{eq:g-abt} to obtain instead
\begin{equation}\label{eq:g-abt2} 
g(x) = \sup_{s>0} (\sfh_s|\d f|^\alpha(x))^{1/\alpha} + \sup_{s>0} \sfh_t | \sqrt{K_- - \Delta} f| (x),
\end{equation}
for which we can repeat the argument above, using \cref{thm:main} in the case $K<0$.

To prove \eqref{eq:g-abt2}, using that $f \in W^{1,p}(\sfX) \subseteq D_p( K_- - \Delta)$ (by \cref{thm:main}) and repeating the arguments in \cite[Theorem 3.6, Remark 3.8]{ambrosio_lusin-type_2018} in the $\RCD$ setting, we obtain that
\begin{equation}\label{eq:fractional}
\abs{ e^{-K_- t} \sfh_t f(x) - f (x) } \le \frac{ 4 \sqrt{t}}{\sqrt{\pi}} \sup_{s>0} \abs{ \sfh_s \sqrt{K_- - \Delta} f}(x), \qquad \forall t > 0,\,\text{for $\mm$-a.e. $x \in \sfX$.} 
\end{equation}
Moreover, by repeating the proof of \cite[Proposition 3.5]{ambrosio_lusin-type_2018}, we have, for every $\alpha\in (1, 2]$ and $t>0$,
\begin{equation}\label{eq:wang} 
\abs{ \sfh_tf(x_0) - \sfh_t f(x_1) } \leq \dist(x_0,x_1) e^{-K t} \exp\cur{ \frac{ \dist(x_0, x_1)^2}{2 \sigma_K(t)(\alpha-1)} }( \sfh_t | \d f|^\alpha (x_0) )^{1/\alpha}, \quad \forall x_1 \in \sfX
\end{equation}
for $\mm$-a.e.\ $x_0 \in \sfX$, where $\sigma_K(t) = K^{-1}(e^{2Kt} -1)$. Combining \eqref{eq:fractional} with \eqref{eq:wang} and choosing  $t = \sqrt{\dist(x_0, x_1)} \le 1/K_-$, we eventually obtain \eqref{eq:g-abt2}.

\section{Examples}\label{sec:examples}

\subsection{Bessel operators}\label{sec:bessel}

As first example, we will discuss the case of Bessel operators, showing that they can be seen as ``metric'' Laplacians on suitable $\RCD(K,\infty)$ spaces. This will allow to recover, as a byproduct of our main results, the well-known boundedness of the Riesz transform in this setting.

\subsubsection*{Setting}


Given $n \in \N_+$ and a multi-index $\alpha = (\alpha_1,\ldots,\alpha_n) \in [0,+\infty)^n$, we define the Bessel operator as
\begin{equation}\label{eq:bessel-operator}
B_\alpha := -\sum_{j=1}^n \Big(\partial_j^2 + \frac{2\alpha_j}{x_j}\partial_j\Big), \qquad D(B_\alpha) := \{f \in C^2_{\sfb\sfs}(\overline{\sfM}) \,:\, \partial_\sfn f = 0 \textrm{ on }\partial\sfM\}
\end{equation}
where $\sfM := (0,+\infty)^n$ and $\partial_\sfn$ denotes the normal derivative. The choice of $D(B_\alpha)$ is motivated by the fact that when $\alpha=0$, $B_0 = -\Delta$ and for the self-adjoint extension of $\Delta$ to coincide with the metric Laplacian we need to impose Neumann boundary conditions. The operator $B_\alpha$ is non-negative and symmetric on $D(B_\alpha)$ with respect to the inner product of $L^2(\sfM,\mm_\alpha)$, where
\[
\mm_\alpha(\de x) := x^{2\alpha}\de x = x_1^{2\alpha_1} \cdots x_n^{2\alpha_n}\de x_1\cdots \de x_n,
\]
and extends uniquely to a self-adjoint operator (that will still be denoted by $B_\alpha$) on $L^2(\sfM,\mm_\alpha)$. By definition, the Sobolev space $\mathbb{W}^{1,2}(\sfM,\mm_\alpha)$ is the closure of $C^\infty_c(\sfM)$ with respect to the Sobolev norm
\[
\|f\|^2_{\mathbb{W}^{1,2}(\sfM,\mm_\alpha)} := \|f\|^2_{L^2(\sfM,\mm_\alpha)} + \||\nabla f|\|^2_{L^2(\sfM,\mm_\alpha)}
\]
and can be equivalently described as
\begin{equation}\label{eq:bessel-sobolev-space}
\mathbb{W}^{1,2}(\sfM,\mm_\alpha) = \{ f \in \mathbb{W}^{1,2}_{{\sf loc}}(\R_+^n) \,:\, f,\partial_j f \in L^2(\R_+^n,\mm_\alpha),\, 1 \leq j \leq n\},
\end{equation}
where $\partial_j$ denotes the distributional $j$-th partial derivative.

\subsubsection*{Fitting the $\RCD$ theory}

Now let us discuss how the setting presented so far can be recast using the language of $\RCD$ spaces and how the Bessel operator $B_\alpha$ can be seen as a ``metric'' Laplace operator.

First of all, since $\sfM$ is not complete, we need to consider its completion $\overline{\sfM} = [0,+\infty)^n$. We endow it with the Euclidean distance $\d_e$ and with the extension of the measure $\mm_\alpha$ to $\overline{\sfM}$; for sake of simplicity, such extension will still be denoted by $\mm_\alpha$. As a first step, we investigate the Sobolev space built over $(\overline{\sfM},\sfd_e,\mm_\alpha)$ and prove that the metric measure space $(\overline{\sfM},\dist_e,\mm_\alpha)$ is infinitesimally Hilbertian.

\begin{proposition}\label{prop:sobolev-bessel}
For all $f \in \Lip(\overline{\sfM},\dist_e)$ it holds $|Df|=|\d f|_e$ $\mm_\alpha$-a.e.\ in $\overline{\sfM}$. 

As a consequence:
\begin{itemize}
\item[(a)] $W^{1,2}(\overline{\sfM},\dist_e,\mm_\alpha) = \mathbb{W}^{1,2}(\sfM,\mm_\alpha)$, the latter being defined as in \eqref{eq:bessel-sobolev-space};
\item[(b)] the Cheeger energy $E$ built on $(\overline{\sfM},\dist_e,\mm_\alpha)$ is a quadratic form;
\item[(c)] the metric differential and the pointwise norm $|\cdot| : L^2(T^*\overline{\sfM}) \to L^2(\overline{\sfM},\mm_\alpha)$ can be naturally identified with the Euclidean differential and norm $|\cdot|_e$, respectively.
\end{itemize}
\end{proposition}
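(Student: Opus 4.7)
The strategy is to establish the pointwise identity $|Df|=|\d f|_e$ for all $f\in\Lip(\overline{\sfM},\dist_e)$, as (a), (b), (c) then follow by standard arguments. For the inequality $|Df|\le|\d f|_e$, recall that for any Lipschitz $f$ on a metric measure space the slope $\operatorname{lip}(f)$ (or the asymptotic Lipschitz constant) is a weak upper gradient. For Euclidean Lipschitz functions, Rademacher's theorem yields $\operatorname{lip}(f)=|\d f|_e$ $\Leb{n}$-a.e., and since $\mm_\alpha\ll\Leb{n}$ the desired bound holds $\mm_\alpha$-a.e.

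For the converse inequality, I would invoke the Ambrosio-Gigli-Savaré characterization of $|Df|$ via $2$-test plans. Fix a direction $\xi\in\R^n$ with $|\xi|=1$ and a compact set $K\Subset\sfM$; since the density $x^{2\alpha}$ of $\mm_\alpha$ with respect to $\Leb{n}$ is smooth and bounded above and below on $K$, the family of straight-line curves $t\mapsto x+t\xi$ with starting law $\eta\mm_\alpha\restr{K}$ (for bounded $\eta$ and sufficiently small $t$) gives a $2$-test plan of bounded compression whose evaluation detects the directional derivative $\partial_\xi f$. The weak upper gradient inequality then yields $|\partial_\xi f|\le|Df|$ $\mm_\alpha$-a.e.\ on $K$. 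Exhausting $\sfM$ by such compacts and then taking a countable dense set of directions $\xi$, one concludes $|\d f|_e=\sup_{|\xi|=1}|\partial_\xi f|\le|Df|$ $\mm_\alpha$-a.e.\ on $\sfM$, and hence on $\overline{\sfM}$ since $\mm_\alpha(\partial\sfM)=0$.

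Once the identity is in place, consequence (a) follows by the very definition of $W^{1,2}(\overline{\sfM},\dist_e,\mm_\alpha)$ as the completion of $\Lip(\overline{\sfM})$ in the Sobolev norm: any approximating sequence $(f_n)$ satisfies $|Df_n|=|\d f_n|_e$, which gives a Cauchy sequence for the classical norm and converges to a function in $\mathbb{W}^{1,2}(\sfM,\mm_\alpha)$ whose weak distributional derivatives are the $L^2(\mm_\alpha)$-limits of $\partial_j f_n$; the two notions of gradient being identified on the dense subclass, uniqueness of the minimal weak upper gradient gives equality of the two spaces, with $|Df|=|\nabla f|_e$. Statement (b) is then immediate from $E(f)=\tfrac12\int|\nabla f|_e^2\,\de\mm_\alpha$, which is a quadratic form. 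Statement (c) is a standard consequence of the structure theory recalled in \cref{sec:rcd}: the cotangent module $L^2(T^*\overline{\sfM})$ is generated (as an $L^\infty$-module) by differentials of $W^{1,2}$-functions, and the isometric identification of pointwise norms with Euclidean norms on the generating set extends to a canonical $L^\infty$-module isomorphism with $L^2(\overline{\sfM},\mm_\alpha;\R^n)$ intertwining $\d$ with the classical differential.

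The main obstacle is the degeneracy (resp.\ blow-up) of $x^{2\alpha}$ at $\partial\sfM$ when some $\alpha_j>0$: test plans must be supported away from the boundary to achieve bounded compression. This is handled by localizing on $K\Subset\sfM$ and using that $\mm_\alpha(\partial\sfM)=0$, so local identifications on each such $K$ patch together to an $\mm_\alpha$-a.e.\ identity on the full completion $\overline{\sfM}$.
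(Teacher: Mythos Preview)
Your argument for the main identity $|Df|=|\d f|_e$ is correct and follows the same localize-to-compacts philosophy as the paper, but with a more hands-on toolkit. The paper restricts to $\sfN_\eps=[\eps,\eps^{-1}]^n$ and then invokes two black-box results: locality of the minimal weak upper gradient under restriction to open subsets \cite[Theorem 4.20]{AmbrosioGigliSavare11-2}, and invariance of the class of test plans (hence of $|D\cdot|$) under a change of reference measure with density bounded above and below \cite[Theorem 3.6]{ambrosio2015riemannian}, reducing to the known Euclidean case $(\sfN_\eps,\dist_e,\Leb{n})$. You instead prove both inequalities directly: $|Df|\le\operatorname{lip}(f)=|\d f|_e$ via Rademacher, and $|\d f|_e\le|Df|$ by building straight-line test plans supported on compacts $K\Subset\sfM$. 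Your route is more self-contained; the paper's is shorter by outsourcing to existing literature. Both rely on $\mm_\alpha(\partial\sfM)=0$ to patch local identities into a global one.

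There is, however, a genuine gap in your derivation of (a). You only establish the inclusion $W^{1,2}(\overline{\sfM},\dist_e,\mm_\alpha)\subseteq\mathbb{W}^{1,2}(\sfM,\mm_\alpha)$: approximating $f\in W^{1,2}$ by Lipschitz $f_n$ and passing to the limit in the classical norm. The phrase ``uniqueness of the minimal weak upper gradient gives equality of the two spaces'' does not supply the reverse inclusion. For that you need to know that $\Lip(\overline{\sfM},\dist_e)$ is dense in $\mathbb{W}^{1,2}(\sfM,\mm_\alpha)$, which is a non-trivial statement about the weighted Sobolev space (a Lipschitz function on $\overline{\sfM}$ need not vanish on $\partial\sfM$, so approximation by $C^\infty_c(\sfM)$ is not automatic). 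The paper handles this by citing \cite[Theorem 7.2]{kufner1980weighted}, using the regularity of $\partial\sfM$; once this density is in hand, equality of the closures follows since the two norms coincide on Lipschitz functions. You should add this ingredient explicitly.
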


\begin{proof}
The space $\Lip(\overline{\sfM},\dist_e)$ is dense in $\mathbb{W}^{1,2}(\sfM,\mm_\alpha)$ by the regularity of $\partial\sfM$ (see e.g.\ \cite[Theorem 7.2]{kufner1980weighted}) and, as proved in \cite{ambrosio2013density}, it is also dense in $W^{1,2}(\overline{\sfM}, \dist_e, \mm_\alpha)$. Therefore, if we prove that $|Df|=|\d f|_e$ $\mm_\alpha$-a.e.\ in $\overline{\sfM}$ for all $f \in \Lip(\overline{\sfM},\dist_e)$, where $|\d f|_e$ denotes the Euclidean norm of the (Euclidean) differential, then we have
\[
W^{1,2}(\overline{\sfM}, \dist_e, \mm_\alpha) = \overline{\Lip(\overline{\sfM},\dist_e)}^{W^{1,2}(\overline{\sfM},\dist_e,\mm_\alpha)} = \overline{\Lip(\overline{\sfM},\dist_e)}^{\mathbb{W}^{1,2}(\sfM,\mm_\alpha)} = \mathbb{W}^{1,2}(\sfM,\mm_\alpha).
\]
In order to prove that $|Df|=|\d f|_e$ $\mm_\alpha$-a.e.\ in $\overline{\sfM}$, let $f \in \Lip(\overline{\sfM},\dist_e)$ and, for any $\eps>0$, let $\sfN_\eps := [\eps,\eps^{-1}]^n$. By \cite[Theorem 4.20]{AmbrosioGigliSavare11-2} and the fact that $\Lip(\overline{\sfM},\dist_e) \subseteq W^{1,2}(\overline{\sfM},\dist_e,\mm_\alpha)$, the function $g_\eps := f|_{\sfN_\eps}$ is Sobolev in $(\sfN_\eps,\dist_e|_{\sfN_\eps \times \sfN_\eps},\mm_\alpha|_{\sfN_\eps})$ and, if we denote by $|D\cdot|_\eps$ the minimal weak upper gradient determined by the metric measure space $(\sfN_\eps,\dist_e|_{\sfN_\eps \times \sfN_\eps},\mm_\alpha|_{\sfN_\eps})$, then $|Dg_\eps|_\eps = |Df|$ $\mm_\alpha$-a.e. Let us further invoke \cite[Theorem 3.6]{ambrosio2015riemannian} (see also \cite[Remark 5.18]{AGS14}): since the density of $\mm_\alpha|_{\sfN_\eps}$ with respect to the Lebesgue measure $\Leb{n}$ is globally bounded away from 0 and $\infty$ in $\sfN_\eps$, this ensures that the classes of $\mm_\alpha|_{\sfN_\eps}$-test plans and $\Leb{n}$-test plans coincide, so that they induce the same minimal weak upper gradient. In other words, 
\[
S^2(\sfN_\eps,\dist_e|_{\sfN_\eps \times \sfN_\eps},\mm_\alpha|_{\sfN_\eps}) = S^2(\sfN_\eps,\dist_e|_{\sfN_\eps \times \sfN_\eps},\Leb{n}|_{\sfN_\eps})
\]
and
\[
|D\varphi|_\eps = |D\varphi|_{\eps,\Leb{n}}, \qquad \forall \varphi \in S^2(\sfN_\eps,\dist_e|_{\sfN_\eps \times \sfN_\eps},\Leb{n}|_{\sfN_\eps}),
\]
where $|D\varphi|_{\eps,\Leb{n}}$ denotes the minimal weak upper gradient associated with the metric measure space $(\sfN_\eps,\dist_e|_{\sfN_\eps \times \sfN_\eps},\Leb{n}|_{\sfN_\eps})$. Now, by consistency with the smooth case we have $|D\varphi|_{\eps,\Leb{n}} = |\d\varphi|_e$ and this proves that $|Df| = |\d g_\eps|_e = |\d f|_e$ $\mm_\alpha$-a.e.\ in $\sfN_\eps$, for every $\eps>0$. By the arbitrariness of $\eps$ and the fact that $\partial\sfM$ is $\mm_\alpha$-negligible, we conclude that $|Df| = |\d f|_e$ $\mm_\alpha$-a.e.\ in $\overline{\sfM}$.


By density, the fact that $|Df|=|\d f|_e$ $\mm_\alpha$-a.e.\ in $\overline{\sfM}$ extends from $f \in \Lip(\overline{\sfM},\dist_e)$ to $f \in W^{1,2}(\overline{\sfM},\dist_e,\mm_\alpha)$ and this allows us to deduce that
\[
E(f) = \int_\sfM |\d f|^2_e \de\mm_\alpha, \qquad \forall f \in W^{1,2}(\overline{\sfM},\dist_e,\mm_\alpha)
\]
and 
\[
\|f\|^2_{W^{1,2}(\overline{\sfM},\dist_e,\mm_\alpha)} = \|f\|^2_{L^2(\sfM,\mm_\alpha)} + \||\d f|_e\|^2_{L^2(\sfM,\mm_\alpha)},
\]
whence the fact that $E$ is quadratic. As for the final part of the statement, \cite[Remark 2.10]{gigli2018lecture} together with $|Df| = |\d f|_e$ $\mm_\alpha$-a.e.\ in $\sfM$ entails that $L^2(T^*\sfX)$ can be identified with the space of $L^2$-sections of the cotangent bundle via the map that sends the metric differential of a function $f$ to the distributional differential of $f$. This implies, in particular, the identification of the norms as well.
\end{proof}

As a second step, we connect the metric Laplacian $\Delta_\alpha$, arising from $(\overline{\sfM},\dist_e,\mm_\alpha)$, with the Bessel operator $B_\alpha$.

\begin{proposition}\label{prop:bessel-laplacian}
The ``metric'' Bessel operator $\cB_\alpha := -\Delta_\alpha$ coincides with $B_\alpha$.
\end{proposition}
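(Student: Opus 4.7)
The plan is to show that $\cB_\alpha$ extends $B_\alpha$ on the core $D(B_\alpha)$ and then invoke the essential self-adjointness of $B_\alpha$ that is stated just before \eqref{eq:bessel-operator}. By \cref{prop:sobolev-bessel}, the Cheeger energy on $(\overline{\sfM},\dist_e,\mm_\alpha)$ is the quadratic form
\[
E(f)=\frac12\int_\sfM |\d f|_e^2\,\de\mm_\alpha,\qquad f\in W^{1,2}(\overline{\sfM},\dist_e,\mm_\alpha)=\mathbb{W}^{1,2}(\sfM,\mm_\alpha),
\]
so $\cB_\alpha=-\Delta_\alpha$ is the unique nonnegative self-adjoint operator on $L^2(\sfM,\mm_\alpha)$ whose associated Dirichlet form is $2E$. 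It therefore suffices to prove that every $f\in D(B_\alpha)$ belongs to $D(\cB_\alpha)$ with $\cB_\alpha f=B_\alpha f$: once this is done, $\cB_\alpha$ is a self-adjoint extension of $B_\alpha\restriction_{D(B_\alpha)}$, which by essential self-adjointness must coincide with $B_\alpha$ itself.

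To verify the identification on the core, fix $f\in D(B_\alpha)$ and any test function $g\in \Lip(\overline{\sfM},\dist_e)\cap L^2(\sfM,\mm_\alpha)$ with bounded support; by Proposition \ref{prop:sobolev-bessel}(c) the Dirichlet integral is exactly the Euclidean one, so I would compute
\[
\int_\sfM \langle \nabla f,\nabla g\rangle_e\,\de\mm_\alpha=\sum_{j=1}^n\int_\sfM \partial_j f\,\partial_j g\,\prod_{i=1}^n x_i^{2\alpha_i}\,\de x,
\]
and perform a one-dimensional integration by parts in each $x_j$ variable. The $x_j$-derivative of the weight produces
\[
\partial_j\bigl(x_j^{2\alpha_j}\partial_j f\bigr)=x_j^{2\alpha_j}\Bigl(\partial_j^2 f+\frac{2\alpha_j}{x_j}\partial_j f\Bigr),
\]
so once the boundary contributions are controlled one obtains
\[
\int_\sfM \langle \nabla f,\nabla g\rangle_e\,\de\mm_\alpha=\int_\sfM g\cdot B_\alpha f\,\de\mm_\alpha,
\]
and then polarization/density of such $g$ in $W^{1,2}(\overline{\sfM},\dist_e,\mm_\alpha)$ shows that $f\in D(\cB_\alpha)$ with $\cB_\alpha f=B_\alpha f$.

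The main technical point is the vanishing of the boundary terms on each face $\{x_j=0\}$ of $\partial\sfM$ and at infinity. At infinity everything is trivial because $f,g$ have bounded support, so only the face $\{x_j=0\}$ is an issue, and the boundary contribution there equals a weighted trace of $g\,\partial_j f\,\prod_i x_i^{2\alpha_i}$. Two cases arise: if $\alpha_j>0$ the weight $x_j^{2\alpha_j}$ annihilates the trace independently of any boundary behaviour of $f,g$; if $\alpha_j=0$ the Neumann condition $\partial_\sfn f=0$ built into $D(B_\alpha)$ forces $\partial_j f|_{x_j=0}=0$, so the trace again vanishes. This case analysis is the one real subtlety of the argument, since it explains precisely why the Neumann condition is the right one to impose in the definition of $B_\alpha$ in order to match the metric Laplacian. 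Once this is handled, the conclusion follows as outlined above by uniqueness of the self-adjoint extension of $B_\alpha$.
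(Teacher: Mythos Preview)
Your proposal is correct and follows essentially the same approach as the paper: verify the integration-by-parts identity $\int_\sfM \langle \nabla f,\nabla g\rangle_e\,\de\mm_\alpha=\int_\sfM g\, B_\alpha f\,\de\mm_\alpha$ for $f\in D(B_\alpha)$ and Lipschitz $g$, deduce that $\cB_\alpha$ is a self-adjoint extension of $B_\alpha$, and conclude by uniqueness of the self-adjoint extension. The paper organizes the integration by parts by first treating $n=1$ and then applying Fubini, and phrases the final step via the adjoint-domain inclusion $D(\cB_\alpha^*)\subseteq D(B_\alpha^*)$ rather than invoking essential self-adjointness by name, but these are cosmetic differences; your explicit dichotomy $\alpha_j>0$ versus $\alpha_j=0$ for the boundary term is exactly the content of the paper's implicit assertion that $h^s(0)=0$.
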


\begin{proof}
As by the previous proposition we know that \[D(B_\alpha) \subseteq \mathbb{W}^{1,2}(\sfM,\mm_\alpha) = W^{1,2}(\overline{\sfM},\dist_e,\mm_\alpha),\] by the density of $\Lip(\overline{\sfM},\dist_e)$ in $W^{1,2}(\overline{\sfM},\dist_e,\mm_\alpha)$ (see \cite{ambrosio2013density}) it is sufficient to show that for any $f \in D(B_\alpha)$ it holds
\begin{equation}\label{eq:bessel-ibp}
\int_\sfM \langle \d f,\d \phi\rangle\de\mm_\alpha = \int_\sfM B_\alpha(f) \phi\de\mm_\alpha, \qquad \forall \phi \in \Lip(\overline{\sfM},\dist_e).
\end{equation}
Indeed, this means that $f \in D(\cB_\alpha)$ and $\cB_\alpha f = B_\alpha f$, namely $\cB_\alpha$ is a (non-negative) self-adjoint extension of $B_\alpha$, which is non-negative and self-adjoint as well. The conclusion now follows observing that on the one hand the inclusion $D(B_\alpha) \subseteq D(\cB_\alpha)$ entails $D(\cB_\alpha^*) \subseteq D(B_\alpha^*)$, while on the other hand by self-adjointness $D(B_\alpha^*) = D(B_\alpha) \subseteq D(\cB_\alpha) = D(\cB_\alpha^*)$, whence $D(\cB_\alpha^*) = D(B_\alpha^*)$. By reflexivity this implies $D(\cB_\alpha) = D(B_\alpha)$ and thus $(\cB_\alpha, D(\cB_\alpha)) = (B_\alpha, D(B_\alpha))$.

As for the proof of \eqref{eq:bessel-ibp}, let us first consider the case $n=1$. Fix $s \in (0,+\infty)$, $\phi \in \Lip([0,+\infty),\dist_e)$ and $f \in C^2_{\sfb\sfs}([0,+\infty))$ with $\partial_\sfn f(0) = f'(0)=0$; for sake of brevity, let us also introduce $h^s(t) := (\partial_t f)(t)t^{2s}\phi(t)$ for $t \geq 0$ and notice that
\[
(\partial_t h^s)(t) = (\partial_t f)(t)t^{2s}(\partial_t \phi)(t) + \partial_t\big((\partial_t f)(t)t^{2s}\big)\phi(t),
\]
so that
\[
(\partial_t f)(t)(\partial_t \phi)(t)t^{2s} = (\partial_t h^s)(t) + B_s(f)(t)\phi(t)t^{2s}
\]
in the sense of distributions on $(0,+\infty)$. This implies
\[
\begin{split}
\int_{\sfM^1} \langle \d f,\d \phi\rangle \de\mm_s & = \int_0^\infty B_s(f)\phi t^{2s}\de t + \int_0^\infty \partial_t h^s \de t \\
& = \int_{\sfM^1} B_s(f)\phi\de\mm_s - h^s(0) = \int_{\sfM^1} B_s(f)\phi\de\mm_s, 
\end{split}
\]
whence the conclusion. For the general case, 
fix $\alpha \in (0,+\infty)^n$, $\phi \in \Lip(\overline{\sfM},\dist_e)$, $f \in D(B_\alpha)$, and set for sake of brevity $\hat \alpha_j := (\alpha_1,\ldots,\alpha_{j-1},\alpha_{j+1},\ldots,\alpha_n)$ (the definition of $\hat x_j$ is similar). Then observe that for every $j=1,\ldots,n$, $\partial_\sfn f = \partial_j f$ on $\overline{\sfM} \cap \{x_j=0\}$, so that for every $x_1,\ldots,x_{j-1},x_{j+1},\ldots,x_n$ fixed, the map $x_j \mapsto f(x_1,\ldots,x_n)$ is $C^2_{\sfb\sfs}([0,+\infty))$ with vanishing derivative in 0. Hence, by Fubini-Tonelli and by the case $n=1$ we have
\[
\begin{split}
\int_\sfM \langle\d f,\d\phi\rangle\de\mm_\alpha & = \sum_{j=1}^n \int_\sfM\partial_j f \partial_j\phi\, \de\mm_\alpha = \sum_{j=1}^n \int_{[0,+\infty)^{n-1}}\bigg(\int_0^\infty \partial_j f\partial_j\phi \,\de\mm_{\alpha_j}(x_j)\bigg)\de\mm_{\hat \alpha_j}(\hat x_j) \\
& = \sum_{j=1}^n \int_{[0,+\infty)^{n-1}}\bigg(\int_0^\infty B_{\alpha_j}(f)\phi\,\de\mm_{\alpha_j}(x_j)\bigg)\de\mm_{\hat\alpha_j}(\hat x_j) \\
& = \sum_{j=1}^n \int_{\overline{\sfM}} B_{\alpha_j}(f)\phi\,\de\mm_\alpha = \int_{\overline{\sfM}} B_\alpha(f)\phi\,\de\mm_\alpha,
\end{split}
\]
where the last identity is due to the fact that $B_\alpha$ is the sum of the $n$ operators $B_{\alpha_1},\ldots,B_{\alpha_n}$ acting separately on the coordinates $x_1,\ldots,x_n$.
\end{proof}

Finally, let us prove that $(\overline{\sfM},\dist_e,\mm_\alpha)$ is an $\RCD(K,\infty)$ space, for some $K \in \R$.

\begin{proposition}\label{prop:bessel-rcd}
Let $\alpha \in [0,+\infty)^n$. Then $(\overline{\sfM},\dist_e,\mm_\alpha)$ is an $\RCD(0,\infty)$ space.
\end{proposition}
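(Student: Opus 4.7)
The plan is to leverage the infinitesimal Hilbertianity already established in \cref{prop:sobolev-bessel} and to verify the $\CD(0,\infty)$ condition, i.e.\ $0$-displacement convexity of ${\rm Ent}_{\mm_\alpha}$ along $W_2$-geodesics in $\mathscr{P}_2(\overline{\sfM})$. My strategy is to recognise $\mm_\alpha$ as a log-concave measure on $\R^n$ whose support is the convex set $\overline{\sfM}$, and then to invoke the classical displacement convexity of the entropy for such measures, which yields $\CD(0,\infty)$ within the Sturm--Lott--Villani synthetic framework.

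The first concrete step is to write $\mm_\alpha = e^{-V}\Leb{n}$, where
\[
V(x) := -2\sum_{j=1}^n \alpha_j \log x_j \quad \text{on $(0,\infty)^n$}, \qquad V \equiv +\infty \quad \text{elsewhere on $\R^n$,}
\]
and to check that $V$ is convex and lower semicontinuous on $\R^n$: on the convex open set $(0,\infty)^n$ it is a non-negative linear combination of the convex functions $-\log x_j$, with ${\rm Hess}\,V = {\rm diag}(2\alpha_j/x_j^2) \ge 0$, while the $+\infty$ extension to the complement of the convex set $(0,\infty)^n$ preserves convexity and lsc. Therefore $\mm_\alpha$ is a log-concave Borel measure on $\R^n$ supported on $\overline{\sfM}$.

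The core step is then to invoke the classical result (Cordero-Erausquin--McCann--Schmuckenschl\"ager, fitting the Sturm--Lott--Villani framework) that the Boltzmann entropy is $0$-displacement convex along $W_2$-geodesics whenever the reference measure is log-concave on a convex subset of $\R^n$. Since $\overline{\sfM}$ is convex, McCann's interpolation between any two measures in $\mathscr{P}_2(\overline{\sfM})$ remains supported in $\overline{\sfM}$, and the convexity of $V$ translates into $0$-convexity of the entropy along any such geodesic; this gives the $\CD(0,\infty)$ condition, and combined with \cref{prop:sobolev-bessel} we obtain the asserted $\RCD(0,\infty)$ property.

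The only genuine obstacle is the singular behaviour of $V$ on $\partial\overline{\sfM}$ when some $\alpha_j > 0$, which prevents a naive pointwise computation of ${\rm Hess}\,V$ along geodesics that hit the boundary. This can be handled in two equivalent ways: either by working directly at the level of the integrated entropy (where the $+\infty$ values of $V$ are harmless, as in the classical McCann--CEMS proof), or, if an approximation argument is preferred, by considering the family of smooth log-concave densities $\mm_\alpha^\eps := \prod_{j=1}^n (x_j + \eps)^{2\alpha_j}\de x$ on $\overline{\sfM}$, for which the corresponding potentials $V_\eps$ are smooth and convex up to the boundary, verifying $\CD(0,\infty)$ for each $\mm_\alpha^\eps$ and then passing to the limit $\eps \downarrow 0$ via the stability of the $\CD(0,\infty)$ condition under weak convergence of reference measures. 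Either route closes the argument.
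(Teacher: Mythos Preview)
Your proposal is correct, and the underlying idea---writing $\mm_\alpha = e^{-V}\Leb{n}$ with $V$ convex and invoking displacement convexity of the entropy---is exactly the paper's idea. The main technical difference is that the paper first invokes the tensorization property of the $\RCD$ condition to reduce to the one-dimensional case $([0,\infty),\dist_e,t^{2\alpha}\de t)$, and only then writes $t^{2\alpha}=e^{-V}$ with $V=-2\alpha\log t$, checks $V''\ge 0$, and applies Sturm's result on convexity of the entropy for convex potentials. You instead work directly in $n$ dimensions and appeal to the Cordero-Erausquin--McCann--Schmuckenschl\"ager / Sturm--Lott--Villani theory for log-concave measures on convex subsets of $\R^n$.

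Both routes are valid. The paper's tensorization step buys you a clean one-dimensional problem where the convexity check is a one-line computation and no boundary issue arises (the potential is smooth on $(0,\infty)$ and the boundary is a single point). Your direct approach is in principle more elementary but forces you to worry about the singularity of $V$ on $\partial\sfM$; note however that this worry is largely cosmetic, since any $\mu\in D({\rm Ent}_{\mm_\alpha})$ is absolutely continuous with respect to $\mm_\alpha$, hence gives zero mass to $\partial\sfM$, and McCann interpolants between interior-supported measures remain interior-supported, so the approximation argument you sketch is not really needed.
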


\begin{proof}
By the tensorization property of the $\RCD$ condition (see \cite[Theorem 7.6]{ambrosio2015riemannian}) and by the product structure of $\overline{\sfM}$, $\dist_e$, and $\mm_\alpha$, it is sufficient to prove the statement for $n=1$, namely: $([0,+\infty),\dist_e,t^{2\alpha}\de t)$ is an $\RCD(0,\infty)$ space. To this end, note that $t^{2\alpha} = e^{-V}$ with $V=-2\alpha\log t$ and 
\[
{\rm Ent}_{e^{-V}\Leb{1}}(\nu) = {\rm Ent}_{\Leb{1}}(\nu) + \int_{\overline{\sfM}} V\,\de\nu, \qquad \forall \nu \in \Probabilities{\overline{\sfM}}.
\]
Since $V'' \geq 0$ and $([0,+\infty),\dist_e,\de t)$ is flat, by \cite[Theorem 1.3]{sturm2005convex} we conclude that ${\rm Ent}_{e^{-V}\Leb{1}}$ is geodesically convex and thus the $\CD(0,\infty)$ condition is satisfied. As $([0,+\infty),\dist_e,t^{2\alpha}\de t)$ is also infinitesimally Hilbertian by Proposition \ref{prop:sobolev-bessel}, we conclude that the space is in fact $\RCD(0,\infty)$.
\end{proof}

By the identification of the ``metric'' differential with the Euclidean one and the ``metric'' Bessel operator with $B_\alpha$, we deduce that the ``metric'' Riesz transform $\cR_\alpha^a := \d (a + \cB_\alpha)^{-1/2}$ can be identified with the Bessel-Riesz transform $\mathcal{R}_\alpha^a := \d(a + B_\alpha)^{-1/2}$ and the boundedness of the latter is ensured by Theorem \ref{thm:main} together with Proposition \ref{prop:bessel-rcd}, so that in conclusion we have the following

\begin{corollary}\label{cor:main-bessel}
Let $\alpha \in [0,+\infty)^n$. Let $B_\alpha$ denote the (unique) self-adjoint extension of \eqref{eq:bessel-operator} on the core $D(B_\alpha)$ defined in \eqref{eq:bessel-operator}. Then, for every $p \in (1,\infty)$ and $f \in \mathbb{W}^{1,2}(\sfM,\mm_\alpha)$ it holds
\[
c_p^{-1}\nor{\sqrt{-\Delta} f}_p \leq \nor{\d f}_p \leq c_p \nor{\sqrt{-\Delta} f}_p,
\]
where $c_p = 16\max\{p,p/(p-1)\}$.
\end{corollary}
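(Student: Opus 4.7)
The plan is to directly invoke \cref{thm:main} on the metric measure space $(\overline{\sfM}, \dist_e, \mm_\alpha)$ and then translate the result back into the classical Bessel notation via the identifications already at our disposal. Specifically, \cref{prop:bessel-rcd} has established that this space is $\RCD(0, \infty)$, so \cref{thm:main} applies with $K = 0$; since $K_- = 0$, both inequalities simplify, the correction term $\sqrt{K_-}\nor{f}_p$ dropping out of the second one, yielding the two-sided bound $c_p^{-1}\nor{\sqrt{-\Delta}f}_p \le \nor{\d f}_p \le c_p\nor{\sqrt{-\Delta}f}_p$ for every $f \in W^{1,2}(\overline{\sfM}, \dist_e, \mm_\alpha)$, where $-\Delta$ denotes the generator of the associated Cheeger energy.

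The remaining task is to verify that, under the identifications, the quantities on both sides coincide with those in the statement. \cref{prop:sobolev-bessel}(a) provides the equality of Sobolev spaces $W^{1,2}(\overline{\sfM}, \dist_e, \mm_\alpha) = \mathbb{W}^{1,2}(\sfM, \mm_\alpha)$, so the class of admissible test functions matches; \cref{prop:sobolev-bessel}(c) identifies the pointwise norm of the cotangent module with the Euclidean norm $|\d f|_e$, which takes care of the $\nor{\d f}_p$ terms. Finally, \cref{prop:bessel-laplacian} identifies the metric operator $-\Delta$ with the Bessel operator $B_\alpha$; by Borel functional calculus the fractional powers $\sqrt{-\Delta}$ and $\sqrt{B_\alpha}$ agree as self-adjoint operators on $L^2(\sfM, \mm_\alpha)$, so the $\nor{\sqrt{-\Delta}f}_p$ terms match as well.

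Putting these identifications together, the two inequalities of \cref{thm:main} become exactly the two inequalities claimed in the corollary, with the same constant $c_p = 16\max\{p, p/(p-1)\}$. I anticipate no real obstacle beyond bookkeeping: the substantive content sits in \cref{prop:sobolev-bessel}--\cref{prop:bessel-rcd} and \cref{thm:main}, so the proof of \cref{cor:main-bessel} reduces to invoking them in sequence.
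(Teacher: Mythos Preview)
Your proposal is correct and takes essentially the same approach as the paper: the paper's argument (given in the paragraph immediately preceding the corollary) also consists of invoking \cref{prop:bessel-rcd} to place the space in the $\RCD(0,\infty)$ class, applying \cref{thm:main} with $K_-=0$, and using the identifications of \cref{prop:sobolev-bessel} and \cref{prop:bessel-laplacian} to translate the metric objects back to the classical Bessel ones.
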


\subsection{Laguerre operators}\label{sec:laguerre}

As a second example, let us consider Laguerre operators. For the definitions and notations we follow \cite{mauceri2015riesz}, where dimension-free $L^p$-bounds on the Riesz transforms are obtained. As concerns the interpretation of Laguerre operators as differential operators on weighted manifolds with boundary, some of the following considerations can also be found in \cite{crasmareanu2015weighted}, where other examples are discussed.

\subsubsection*{Setting}

Given $n \in \N_+$ and a multi-index $\alpha = (\alpha_1,\ldots,\alpha_n) \in (-1,+\infty)^n$, the Laguerre operator of type $\alpha$ is defined as
\begin{equation}\label{eq:laguerre-operator}
L_\alpha := -\sum_{j=1}^n \Big( x_j\partial_j^2 + (\alpha_j+1-x_j)\partial_j\Big)
\end{equation}
on the domain $C_c^\infty(\sfM)$, where $\sfM := (0,+\infty)^n$. It is non-negative and symmetric on such domain with respect to the inner product of $L^2(\sfM,\mm_\alpha)$, where
\[
\mm_\alpha := \prod_{j=1}^n \frac{x_j^{\alpha_j}e^{-x_j}}{\Gamma(\alpha_j+1)}\de x_j.
\]
If $\alpha \in [1,+\infty)^n$, then $L_\alpha$ is also essentially self-adjoint on $L^2(\sfM,\mm_\alpha)$, but if $\alpha_j < 1$ for some $j$, then $L_\alpha$ has several self-adjoint realizations, depending on the boundary conditions at $x_j=0$. In order to select a specific extension, let us define the Laguerre polynomial of type $\alpha$ and multidegree $k$ on $\sfM$ as
\[
P_k^\alpha(x) = P_{k_1}^{\alpha_1}(x_1) \cdots P_{k_n}^{\alpha_n}(x_n), \qquad P_{k_i}^{\alpha_i}(x_i) := \frac{1}{k!}e^{x_i}x_i^{-\alpha_i}\frac{\de^{k_i}}{\de x_i^{k_i}}(e^{-x_i}x_i^{k_i+\alpha_i}).
\]
These polynomials are orthogonal in $L^2(\sfM,\mm_\alpha)$, so that if we denote by $\ell_k^\alpha := \|L_k^\alpha\|_{L^2(\sfM,\mm_\alpha)}^{-1} L_k^\alpha$ their normalizations, the family $\{\ell_k^\alpha\}_{k \in \N^n}$ is an orthonormal basis of $L^2(\sfM,\mm_\alpha)$ consisting of eigenfunctions of $L_\alpha$. Namely, $L_\alpha \ell_k^\alpha = |k| \ell_k^\alpha$, for all $k \in \N^n$, where $|k| := k_1 + \cdots + k_n$ denotes the length of the multi-index. This allows to define
\begin{equation}\label{eq:laguerre-operator2}
L_\alpha := \sum_{j=0}^\infty j\mathscr{P}_j, \qquad \mathscr{P}_j f := \sum_{|k| = j} \langle f,\ell_k^\alpha \rangle_{L^2(\sfM,\mm_\alpha)}\ell_k^\alpha
\end{equation}
with domain
\[
D(L_\alpha) := \Big\{f \in L^2(\sfM,\mm_\alpha) \,:\, \sum_{j=0}^\infty j^2 \|\mathscr{P}_j f\|^2_{L^2(\sfM,\mm_\alpha)} < +\infty \Big\}.
\]
If $\alpha \in [1,+\infty)^n$ this coincides with the spectral resolution of the closure of $L_\alpha$ as defined in \eqref{eq:laguerre-operator}, whereas for all other values of $\alpha \in (-1,+\infty)^n$ this will be the self-adjoint extension of \eqref{eq:laguerre-operator} we shall consider.

If we introduce the Laguerre partial derivatives $\delta_j := \sqrt{x_j}\partial_j$, $1 \leq j \leq n$, then we can also define the Sobolev space $\mathbb{W}^{1,2}(\sfM,\mm_\alpha)$ as the closure of $C^\infty_c(\sfM)$ with respect to the norm
\[
\|f\|^2_{\mathbb{W}^{1,2}(\sfM,\mm_\alpha)} := \|f\|^2_{L^2(\sfM,\mm_\alpha)} + \sum_{j=1}^n \|\delta_j f\|^2_{L^2(\sfM,\mm_\alpha)},
\]
whence
\begin{equation}\label{eq:laguerre-sobolev-space}
\mathbb{W}^{1,2}(\sfM,\mm_\alpha) = \{ f \in \mathbb{W}^{1,2}_{{\sf loc}}(\R_+^n) \,:\, f,\delta_j f \in L^2(\R_+^n,\mm_\alpha),\, 1 \leq j \leq n\}.
\end{equation}

\subsubsection*{Fitting the $\RCD$ theory}

After this premise, let us now explain how $L_\alpha$ can be seen as ``metric'' Laplace operator on some $\RCD(K,\infty)$ space. To this end, let us first introduce the Riemannian metric naturally associated with $L_\alpha$: this is given by $\sfg := \sum_{j=1}^n x_j^{-1}\de x_j \otimes \de x_j$ and a straightforward calculation shows that the gradient and differential associated with the Riemannian manifold $(\sfM,\sfg)$ are
\begin{equation}\label{eq:gradient-differential}
\nabla = (\sqrt{x_1}\partial_1,\cdots,\sqrt{x_n}\partial_n) \qquad \textrm{and} \qquad \d = \Big(\frac{1}{\sqrt{x_1}}\partial_1, \cdots, \frac{1}{\sqrt{x_n}}\partial_n \Big),
\end{equation}
so that for any $C^1$ function $f$ it holds
\begin{equation}\label{eq:metric-partial_deriv}
|\nabla f|_\sfg^2 = |\d f|^2_{\sfg^{-1}} = \sum_{j=1}^n |\delta_j f|^2.
\end{equation}
Moreover, $\sfg$ induces a volume form ${\rm vol}_\sfg$ and a distance $\dist_\sfM$. The former is given by
\begin{equation}\label{eq:laguerre-volume}
{\rm vol}_\sfg(\de x) = \Big(\prod_{j=1}^n x_j\Big)^{-1}\de x
\end{equation}
while the latter is
\begin{equation}\label{eq:distance}
\dist_\sfM(x,y)^2 = \sum_{j=1}^n \dist_{\sfM^1}(x_j,y_j)^2 = 4\sum_{j=1}^n |\sqrt{x_j}-\sqrt{y_j}|^2,
\end{equation}
where $(\sfM^1,\sfg^1) = ((0,+\infty),t^{-1}\de t \otimes \de t)$. Indeed, $(\sfM,\sfg)$ is the product of $n$ copies of $(\sfM^1,\sfg^1)$, which explains the first identity, while geodesics in $(\sfM^1,\sfg^1)$ can be computed observing that the Christoffel's symbol of $\sfg^1$ is $\Gamma(t)=1/2t$, whence the following geodesic equation
\[
\eta''_t = \frac{(\eta'_t)^2}{2\eta_t} = \frac12 |\eta'_t|^2_{\sfg^1_{\eta_t}}
\]
for a curve $\eta : [0,1] \to \sfM^1$. The solutions to this equation are of the form $\eta_t = a(t+b)^2$ for $a \neq 0$, $b \in \R$, and solving $\eta_0 = ab^2$, $\eta_1 = a(b+1)^2$ yields $a=|\sqrt{\eta_1}-\sqrt{\eta_0}|^2$, whence $\sfd_{\sfM^1}(\eta_1,\eta_0)^2 = 4a = 4|\sqrt{\eta_1}-\sqrt{\eta_0}|^2$. 

As $(\sfM,\sfd_\sfM)$ is not complete, we need to consider its completion $\overline{\sfM} = [0,+\infty)^n$ endowed with the extension of $\dist_\sfM$ to $\overline{\sfM} \times \overline{\sfM}$ (which can be naturally done via \eqref{eq:distance}). Note also that the measure $\mm_\alpha$ previously defined is locally finite on $\overline{\sfM}$ if and only if $\alpha \in (-1,+\infty)^n$, consistently with the restrictions on the parameter, and can be naturally extended to a Radon measure on $\overline{\sfM}$. As in the case of Bessel operators, our investigation starts with the Sobolev space and the infinitesimal Hilbertianity of $(\overline{\sfM},\dist_\sfM,\mm_\alpha)$. 


\begin{proposition}\label{prop:sobolev-laguerre}
For all $f \in \Lip(\overline{\sfM},\dist_\sfM)$ it holds $|Df|=|\d f|_{\sfg^{-1}}$ $\mm_\alpha$-a.e.\ in $\overline{\sfM}$. 

As a consequence:
\begin{itemize}
\item[(a)] $W^{1,2}(\overline{\sfM},\dist_\sfM,\mm_\alpha) \subseteq \mathbb{W}^{1,2}(\sfM,\mm_\alpha)$, the latter being defined as in \eqref{eq:laguerre-sobolev-space};
\item[(b)] the Cheeger energy $E$ built on $(\overline{\sfM},\dist_\sfM,\mm_\alpha)$ is a quadratic form;
\item[(c)] the metric differential and the pointwise norm $|\cdot| : L^2(T^*\overline{\sfM}) \to L^2(\overline{\sfM},\mm_\alpha)$ can be naturally identified with the Riemannian differential \eqref{eq:gradient-differential} and $|\cdot|_{\sfg^{-1}}$, respectively.
\end{itemize}
\end{proposition}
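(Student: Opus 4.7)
The plan is to mimic the strategy of \cref{prop:sobolev-bessel}, the crucial step being the pointwise identification $|Df|=|\d f|_{\sfg^{-1}}$ $\mm_\alpha$-a.e.\ for $f\in\Lip(\overline{\sfM},\dist_\sfM)$. Once this is in place, consequences (a), (b), (c) follow by the same chain of implications used in the Bessel case, combined with the explicit formula \eqref{eq:metric-partial_deriv}; note in particular that only the inclusion $\subseteq$ in (a) is claimed, so no density of $\Lip(\overline{\sfM},\dist_\sfM)$ inside $\mathbb{W}^{1,2}(\sfM,\mm_\alpha)$ needs to be verified, which makes (a) essentially a direct consequence of the pointwise identification.

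The cleanest route to the main identity is to exploit the change of coordinates $y_j:=2\sqrt{x_j}$, $j=1,\dots,n$, which is a smooth diffeomorphism of $\sfM$ onto itself extending to a homeomorphism of $\overline{\sfM}$. A direct computation using $\de x_j=(y_j/2)\,\de y_j$ gives $x_j^{-1}\,\de x_j\otimes \de x_j=\de y_j\otimes \de y_j$, so that $\sfg$ pulls back to the Euclidean metric; together with \eqref{eq:distance} this shows that the map is a \emph{global} isometry between $(\overline{\sfM},\dist_\sfM)$ and $([0,\infty)^n,\dist_e)$. Under the same change of variables the Laguerre derivatives $\delta_j=\sqrt{x_j}\partial_{x_j}$ become the Euclidean derivatives $\partial_{y_j}$, while $\mm_\alpha$ transforms into an absolutely continuous measure whose density with respect to Lebesgue is smooth and bounded above and below on every compact subset of $(0,\infty)^n$.

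After this reduction the argument proceeds exactly as in the proof of \cref{prop:sobolev-bessel}. I would localize on the compact exhaustion $\sfN_\eps:=[\eps,\eps^{-1}]^n$ via \cite[Theorem 4.20]{AmbrosioGigliSavare11-2} to reduce the computation of $|Df|$ to the restricted metric measure space; then, using \cite[Theorem 3.6]{ambrosio2015riemannian} together with the fact that the pullback of $\mm_\alpha$ has density bounded away from $0$ and $\infty$ with respect to Lebesgue on the image of $\sfN_\eps$, identify $\mm_\alpha$-test plans with Lebesgue-test plans so as to reduce to the classical smooth identification $|Df|=|\d f|_e$ in $y$-coordinates, which in the original coordinates reads $|Df|=|\d f|_{\sfg^{-1}}$. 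Sending $\eps\to 0$ and recalling that $\partial\sfM$ is $\mm_\alpha$-negligible closes the proof of the key identity.

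The main obstacle I expect is the verification that the map $y_j=2\sqrt{x_j}$ indeed extends to a \emph{metric} isometry on the completions, rather than only a smooth Riemannian one on the interior: one needs to be sure that geodesics of $(\sfM,\sfg)$ approaching the boundary realize exactly the Euclidean distance under the change of variables. This is however already encoded in the explicit formula \eqref{eq:distance}, which shows directly that $\dist_\sfM(x,x')=2\bigl|\sqrt{x}-\sqrt{x'}\bigr|_e$ for all $x,x'\in\overline{\sfM}$, where $\sqrt{\cdot}$ is applied componentwise; this makes the isometric extension a routine check rather than a genuine difficulty.
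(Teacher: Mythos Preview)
Your proposal is correct and follows essentially the same route as the paper: localize on $\sfN_\eps=[\eps,\eps^{-1}]^n$ via \cite[Theorem 4.20]{AmbrosioGigliSavare11-2}, use \cite[Theorem 3.6]{ambrosio2015riemannian} to pass to a reference measure with bounded density, identify the weak upper gradient with the smooth one there, and let $\eps\to 0$. The only difference is in presentation: the paper stays in the original coordinates and compares $\mm_\alpha$ with the Riemannian volume ${\rm vol}_\sfg$ on $\sfN_\eps$, invoking ``consistency with the smooth case'' for the Riemannian identification $|Df|_{\eps,{\rm vol}_\sfg}=|\d f|_{\sfg^{-1}}$, whereas you first apply the global isometry $y_j=2\sqrt{x_j}$ (which the paper itself uses later to verify the $\RCD(1/2,\infty)$ condition) to reduce outright to the Euclidean situation of \cref{prop:sobolev-bessel}. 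Your version has the mild advantage of making the ``smooth case'' step completely explicit, but the two arguments are interchangeable.
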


\begin{proof}
As concerns the identity $|Df|=|\d f|_{\sfg^{-1}}$, the proof is analogous to the one explained in Proposition \ref{prop:sobolev-bessel}. Indeed, leveraging on \cite[Theorem 4.20]{AmbrosioGigliSavare11-2} and \cite[Theorem 3.6]{ambrosio2015riemannian} one can show that, for any $f \in \Lip(\overline{\sfM},\dist_\sfM)$ and $\eps>0$, it holds $|Df| = |Df|_{\eps,{\rm vol}_\sfg}$ $\mm_\alpha$-a.e.\ in $\sfN_\eps := [\eps,\eps^{-1}]^n$, where $|Df|_{\eps,{\rm vol}_\sfg}$ denotes the minimal weak upper gradient associated with the metric measure space $(\sfN_\eps,\dist_\sfM|_{\sfN_\eps \times \sfN_\eps},{\rm vol}_\sfg|_{\sfN_\eps})$, ${\rm vol}_\sfg$ being defined as in \eqref{eq:laguerre-volume}. 

Moreover, by consistency with the smooth case $|Df|_{\eps,{\rm vol}_\sfg}$ coincides with the $|\cdot|_{\sfg^{-1}}$-norm of the differential associated with the Riemannian manifold $(\sfN_\eps,\dist_\sfM|_{\sfN_\eps \times \sfN_\eps},{\rm vol}_\sfg|_{\sfN_\eps})$, namely
\[
|Df|^2_{\eps,{\rm vol}_\sfg} = |\d f|^2_{\sfg^{-1}} \stackrel{\eqref{eq:metric-partial_deriv}}{=} \sum_{j=1}^n|\delta_j f|^2, \qquad \mm_\alpha\textrm{-a.e. in } \sfN_\eps.
\]
Recall indeed that, although being called ``gradient'', $|Df|$ is rather a cotangent object, whence $|\d f|_{\sfg^{-1}}$ instead of $|\d f|_\sfg$. Therefore $|Df| = |\d (f|_{\sfN_\eps})|_{\sfg^{-1}}$ $\mm_\alpha$-a.e.\ in $\sfN_\eps$, for every $\eps>0$, and since $f \in \Lip(\overline{\sfM},\dist_\sfM) \subseteq W^{1,2}(\overline{\sfM},\dist_\sfM,\mm_\alpha)$, this means that $f|_{\sfN_\eps} \in \mathbb{W}^{1,2}(\sfN_\eps,\mm_\alpha|_{\sfN_\eps})$ for every $\eps>0$. Namely, $f \in \mathbb{W}^{1,2}_{{\sf loc}}(\R^n_+)$ with $\d f := \d f|_{\sfN_\eps}$, the definition being meaningful by locality of the (Riemannian) differential. In conclusion, $|Df| = |\d f|_{\sfg^{-1}}$ $\mm_\alpha$-a.e.\ in $\sfM$ and $f \in \mathbb{W}^{1,2}(\sfM,\mm_\alpha)$, so that $\Lip(\overline{\sfM},\dist_\sfM) \subseteq \mathbb{W}^{1,2}(\sfM,\mm_\alpha)$. By taking the closure of $\Lip(\overline{\sfM},\dist_\sfM)$ in the $W^{1,2}$- and $\mathbb{W}^{1,2}$-norms and using again the density of $\Lip(\overline{\sfM},\dist_\sfM)$ in $W^{1,2}(\overline{\sfM}, \dist_\sfM, \mm_\alpha)$, see \cite{ambrosio2013density}, we also obtain
\[
W^{1,2}(\overline{\sfM}, \dist_\sfM, \mm_\alpha) = \overline{\Lip(\overline{\sfM},\dist_\sfM)}^{W^{1,2}(\overline{\sfM}, \dist_\sfM, \mm_\alpha)} = \overline{\Lip(\overline{\sfM},\dist_\sfM)}^{\mathbb{W}^{1,2}(\sfM,\mm_\alpha)} \subseteq \mathbb{W}^{1,2}(\sfM,\mm_\alpha),
\]
namely point (a). The fact that $E$ is a quadratic form follows verbatim the approach described in the proof of Proposition \ref{prop:sobolev-bessel} and the same holds for the final part of the statement.
\end{proof}

Unlike Proposition \ref{prop:sobolev-bessel}, we have not investigate the density of $\Lip(\overline{\sfM},\dist_\sfM)$ in the ``classic'' Sobolev space $\mathbb{W}^{1,2}(\sfM,\mm_\alpha)$, as the degeneracy of the metric $\sfg$ on $\partial\sfM$ may lead to technical issues that we prefer to avoid. It is instead sufficient, as next step, to prove that the normalized Laguerre polynomials $\{\ell_k^\alpha\}_{k \in \N^n}$ are Sobolev in the metric sense.

\begin{lemma}\label{lem:laguerre-sobolev}
For every $\alpha \in (-1,+\infty)^n$ and $k \in \N^n$, $\ell_k^\alpha \in W^{1,2}(\overline{\sfM},\dist_\sfM,\mm_\alpha)$.
\end{lemma}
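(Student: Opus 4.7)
My strategy is to approximate $\ell_k^\alpha$ in the Banach space $W^{1,2}(\overline{\sfM},\dist_\sfM,\mm_\alpha)$ by Lipschitz functions with bounded support, then conclude by completeness. The three ingredients I will exploit are: (i) by the Rodrigues-type formula defining $P_k^\alpha$, both $\ell_k^\alpha$ and each $\delta_j\ell_k^\alpha = \sqrt{x_j}\,\partial_j\ell_k^\alpha$ are polynomials in $x$, hence lie in $L^p(\mm_\alpha)$ for every $p\in[1,\infty)$ thanks to the exponential factors in the density of $\mm_\alpha$; (ii) the identity \eqref{eq:distance} shows that the map $\Phi(x):=(\sqrt{x_1},\ldots,\sqrt{x_n})$ is an isometry of $(\overline{\sfM},\dist_\sfM)$ onto $([0,\infty)^n,2\dist_e)$; (iii) by Proposition \ref{prop:sobolev-laguerre}, for $f\in W^{1,2}(\overline{\sfM},\dist_\sfM,\mm_\alpha)$ one has $|\d f|^2 = \sum_j|\delta_j f|^2$ $\mm_\alpha$-a.e.

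Concretely, I will fix $\chi\in C^\infty([0,\infty))$ with $\chi\equiv 1$ on $[0,1]$ and $\chi\equiv 0$ on $[2,\infty)$, and set
\[
\eta_R(x):=\chi\!\left(\tfrac{x_1+\cdots+x_n}{R^2}\right),\qquad u_R:=\eta_R\,\ell_k^\alpha,\qquad R\ge 1.
\]
To check that $u_R\in\Lip(\overline{\sfM},\dist_\sfM)$, I will pass to the variable $y=\Phi(x)$: in these coordinates $u_R$ becomes a polynomial in $y$ multiplied by the smooth cutoff $\chi(|y|^2/R^2)$, which is globally Lipschitz on $([0,\infty)^n,\dist_e)$ and has support contained in the Euclidean ball of radius $\sqrt{2}\,R$. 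Pulling back through the isometry $\Phi$ then yields $u_R\in\Lip(\overline{\sfM},\dist_\sfM)\subset W^{1,2}(\overline{\sfM},\dist_\sfM,\mm_\alpha)$.

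Finally, I will show that $u_R\to\ell_k^\alpha$ in $W^{1,2}$. The $L^2$-convergence follows by dominated convergence from $|u_R|\le|\ell_k^\alpha|\in L^2(\mm_\alpha)$. For the gradient I will write, for each $j$,
\[
\delta_j u_R-\delta_j\ell_k^\alpha=(\eta_R-1)\,\delta_j\ell_k^\alpha+\ell_k^\alpha\,\sqrt{x_j}\,\partial_j\eta_R;
\]
the first summand tends to $0$ in $L^2(\mm_\alpha)$ again by dominated convergence (using $|\delta_j\ell_k^\alpha|\in L^2(\mm_\alpha)$), while for the second I will use that on the support of $\partial_j\eta_R$ one has $x_j\le\sum_i x_i\le 2R^2$, so $|\sqrt{x_j}\,\partial_j\eta_R|\le C/R$, and therefore its $L^2(\mm_\alpha)$-norm is controlled by $(C/R)\,\|\ell_k^\alpha\,\chi_{\{\sum_i x_i\ge R^2\}}\|_{L^2(\mm_\alpha)}\to 0$ as $R\to\infty$. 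By completeness of $W^{1,2}$ and uniqueness of the $L^2$-limit, this yields $\ell_k^\alpha\in W^{1,2}(\overline{\sfM},\dist_\sfM,\mm_\alpha)$. The main obstacle I anticipate is the Lipschitz verification: polynomials in $x$ are \emph{not} globally $\dist_\sfM$-Lipschitz (since $|\d f|^2_{\sfg^{-1}}=\sum_j x_j\,|\partial_j f|^2$ grows polynomially at infinity), so the reduction to $y$-coordinates via $\Phi$, where the geometry becomes flat Euclidean and smooth bumps are trivially Lipschitz, is the essential trick.
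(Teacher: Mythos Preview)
Your approach is correct and essentially identical to the paper's: there the cutoff is $\psi_m(x)=\varphi(m^{-1}\dist_\sfM(0,x))$, which since $\dist_\sfM(0,x)=2\sqrt{x_1+\cdots+x_n}$ is just a reparametrization of your $\eta_R$, and the Lipschitz property of $\psi_m\ell_k^\alpha$ is obtained via the local distance comparison $2\dist_e(x,y)\le(\|x\|_\infty+\|y\|_\infty)\dist_\sfM(x,y)$ rather than your (equally valid) passage to $y$-coordinates through the isometry $\Phi$. One small inaccuracy: $\delta_j\ell_k^\alpha=\sqrt{x_j}\,\partial_j\ell_k^\alpha$ is not itself a polynomial in $x$, but its square $x_j|\partial_j\ell_k^\alpha|^2$ is, and that is all your $L^2(\mm_\alpha)$ argument actually needs.
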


\begin{proof}
Fix $\varphi \in C^\infty_c([0,+\infty))$ with $\varphi \geq 0$, $\supp(\varphi) = [0,2]$, and $\varphi = 1$ on $[0,1]$; for every $m \in \N_+$ define $\psi_m : \overline{\sfM} \to \R$, $\psi_m(x) := \varphi(m^{-1}\dist_\sfM(0,x))$. We observe that $\psi_m \in \Lip(\overline{\sfM},\dist_\sfM)$ and, by the previous proposition,
\begin{equation}\label{eq:lipschitz}
|D\psi| = |\d\psi|_{\sfg^{-1}} \leq \frac{1}{m}\|\varphi'\|_\infty, \qquad \mm_\alpha\textrm{-a.e. in } \sfM.
\end{equation}
From \eqref{eq:distance} it follows that $2\dist_e(x,y) \leq (\|x\|_\infty+\|y\|_\infty) \dist_\sfM(x,y)$, where $\dist_e$ denotes the Euclidean distance, so that
\[
\ell_k^\alpha \in C^\infty(\R^n) \subseteq {\rm Lip}_{{\sf loc}}(\overline{\sfM},\dist_e) \subseteq {\rm Lip}_{{\sf loc}}(\overline{\sfM},\dist_\sfM).
\]
Thus $\psi_m\ell_k^\alpha \in \Lip(\overline{\sfM},\dist_\sfM) \subseteq W^{1,2}(\overline{\sfM},\dist_\sfM,\mm_\alpha)$ for all $m \in \N_+$. From \eqref{eq:metric-partial_deriv} we see that $|\d\ell_k^\alpha|^2_{\sfg^{-1}}$ is a polynomial and therefore, by definition of $\mm_\alpha$, it belongs to $L^1(\sfM,\mm_\alpha)$. From $\d(\psi_m\ell_k^\alpha) = \ell_k^\alpha \d\psi_m + \psi_m\d\ell_k^\alpha$ and \eqref{eq:lipschitz}, by the dominated convergence theorem we deduce that $\d(\psi_m\ell_k^\alpha) \to \d\ell_k^\alpha$ in $L^2(T^*\overline{\sfM})$. Since $\psi_m\ell_k^\alpha \to \ell_k^\alpha$ in $L^2(\sfM,\mm_\alpha) = L^2(\overline{\sfM},\dist_\sfM,\mm_\alpha)$ too and the metric differential $\d$ is closed, we conclude that $\ell_k^\alpha \in W^{1,2}(\overline{\sfM},\dist_\sfM,\mm_\alpha)$.
\end{proof}

We are now ready to study the relationship between $\Delta_\alpha$ and the Laguerre operator $L_\alpha$.

\begin{proposition}\label{prop:laguerre-laplacian}
The ``metric'' Laguerre operator $\cL_\alpha := -\Delta_\alpha$ coincides with $L_\alpha$.
\end{proposition}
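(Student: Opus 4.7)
The plan is to mirror the strategy of Proposition~\ref{prop:bessel-laplacian}, but since the spectral definition \eqref{eq:laguerre-operator2} does not come with a classical differential core, I will use the Laguerre polynomials $\{\ell_k^\alpha\}_{k\in\N^n}$ themselves as the natural dense set of eigenfunctions. Concretely, I will first show that for every multi-index $k\in\N^n$ one has $\ell_k^\alpha\in D(\cL_\alpha)$ with $\cL_\alpha\ell_k^\alpha = |k|\,\ell_k^\alpha$, and then use the fact that both $L_\alpha$ and $\cL_\alpha$ are self-adjoint on $L^2(\sfM,\mm_\alpha)$ to conclude that they coincide.

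The key step is the integration-by-parts identity
\[
\int_{\sfM} \ang{\d\ell_k^\alpha,\d\phi}\,\de\mm_\alpha = |k|\int_\sfM \ell_k^\alpha\,\phi\,\de\mm_\alpha, \qquad \forall\,\phi\in\Lip(\overline{\sfM},\dist_\sfM).
\]
By Lemma~\ref{lem:laguerre-sobolev}, $\ell_k^\alpha\in W^{1,2}(\overline{\sfM},\dist_\sfM,\mm_\alpha)$, and Proposition~\ref{prop:sobolev-laguerre} together with \eqref{eq:metric-partial_deriv} identifies $\ang{\d\ell_k^\alpha,\d\phi} = \sum_j x_j\,\partial_j\ell_k^\alpha\,\partial_j\phi$ $\mm_\alpha$-a.e. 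Using the product structure of $\sfM$, $\mm_\alpha$, and $\ell_k^\alpha$, Fubini reduces everything to the one-dimensional identity: for a polynomial $g$ and any Lipschitz $\psi$ with bounded support on $[0,\infty)$,
\[
\int_0^\infty x\,g'(x)\psi'(x)\,\frac{x^{\alpha_j}e^{-x}}{\Gamma(\alpha_j+1)}\,\de x = \int_0^\infty \psi(x)\bigl(xg''(x)+(\alpha_j+1-x)g'(x)\bigr)\,\frac{x^{\alpha_j}e^{-x}}{\Gamma(\alpha_j+1)}\,\de x.
\]
This follows from the computation $\partial_x\bigl(x^{\alpha_j+1}e^{-x}g'(x)\bigr) = -x^{\alpha_j}e^{-x}\bigl(xg''+(\alpha_j+1-x)g'\bigr)$ and integration by parts on $(0,\infty)$. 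The boundary term at $+\infty$ vanishes because $\psi$ has bounded support (recall $\Lip$ means $\Lip_{bs}$, and $\dist_\sfM$-boundedness implies Euclidean boundedness), while at $0$ it vanishes thanks to the restriction $\alpha_j>-1$ together with smoothness of $g$, which forces $x^{\alpha_j+1}g'(x)\to 0$ as $x\downarrow 0$. Summing over $j$ and using $L_\alpha\ell_k^\alpha=|k|\ell_k^\alpha$ yields the displayed identity. Since $|k|\ell_k^\alpha\in L^2(\sfM,\mm_\alpha)$, the definition \eqref{eq:ibp-def-meas-lap} gives $\ell_k^\alpha\in D(\bd)$ with $\bd\ell_k^\alpha = -|k|\ell_k^\alpha\,\mm_\alpha$, hence $\ell_k^\alpha\in D(\cL_\alpha)$ with $\cL_\alpha\ell_k^\alpha = |k|\,\ell_k^\alpha$.

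To close the argument, I would observe that $\{\ell_k^\alpha\}_{k\in\N^n}$ is an orthonormal basis of $L^2(\sfM,\mm_\alpha)$, so by closedness of $\cL_\alpha$ and approximation by finite linear combinations, any $f=\sum_k c_k\ell_k^\alpha$ with $\sum_k |k|^2|c_k|^2<\infty$ belongs to $D(\cL_\alpha)$ with $\cL_\alpha f = \sum_k |k|c_k\ell_k^\alpha = L_\alpha f$. Thus $L_\alpha\subseteq\cL_\alpha$, and since both operators are self-adjoint, taking adjoints reverses the inclusion and forces $\cL_\alpha = L_\alpha$. The main technical obstacle is the one just addressed, namely the vanishing of the boundary term at $x_j=0$ in the IBP when $\alpha_j\in(-1,0)$: this is handled precisely by the hypothesis $\alpha\in(-1,\infty)^n$ combined with the polynomial nature of $\ell_k^\alpha$, which together ensure that $x^{\alpha_j+1}\partial_{x_j}\ell_k^\alpha$ vanishes at the boundary despite the degeneracy of the weight.
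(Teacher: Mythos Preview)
Your proposal is correct and follows essentially the same approach as the paper: both proofs establish the eigenfunction identity $\cL_\alpha\ell_k^\alpha=|k|\ell_k^\alpha$ via a one-dimensional integration by parts (reducing from $n$ to $1$ through the product structure), check that the boundary term at $0$ vanishes because $\alpha_j>-1$ and $\partial_{x_j}\ell_k^\alpha$ is polynomial, and then close by combining the approximation $f_m=\sum_{j\le m}\mathscr{P}_jf$ with closedness of $\cL_\alpha$ and the self-adjointness of both operators. One minor point worth tightening: the slice $x_j\mapsto\phi(x)$ of a test function $\phi\in\Lip(\overline{\sfM},\dist_\sfM)$ is Lipschitz with respect to $\dist_{\sfM^1}$, not the Euclidean distance, so near $0$ its Euclidean derivative may blow up like $x_j^{-1/2}$; the paper handles this by integrating on $[\eps^2,\infty)$ and letting $\eps\downarrow0$, but your boundary-term analysis goes through unchanged since $\psi$ is still bounded and locally absolutely continuous on $(0,\infty)$.
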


\begin{proof}
In order to identify $L_\alpha$ and $\mathcal{L}_\alpha$, it suffices to show that
\begin{equation}\label{eq:identification}
\ell_k^\alpha \in D(\cL_\alpha) \qquad \textrm{and} \qquad \cL_\alpha \ell_k^\alpha = L_\alpha \ell_k^\alpha, \quad \forall k \in \N^n.
\end{equation}
Indeed, if the claim above is true, then for any $f \in D(L_\alpha)$ and $j \in \N$ it holds $\mathscr{P}_j f \in D(\cL_\alpha)$ and $\cL_\alpha \mathscr{P}_j f = L_\alpha \mathscr{P}_j f = j\mathscr{P}_j f$. Hence, if we define $f_m := \sum_{j=0}^m \mathscr{P}_j f$, then we have that $f_m \to f$ in $L^2(\sfM,\mm_\alpha)$, $f_m \in D(\cL_\alpha)$ and $\cL_\alpha f_m = \sum_{j=0}^m j\mathscr{P}_j f \to L_\alpha f$ in $L^2(\sfM,\mm_\alpha)$, so that by closedness of $\cL_\alpha$ we deduce that $f \in D(\cL_\alpha)$ and $\cL_\alpha f = L_\alpha f$. This means that $\cL_\alpha$ is a (non-negative) self-adjoint extension of $L_\alpha$, which is non-negative and self-adjoint as well. The conclusion then follows as already argued in Proposition \ref{prop:bessel-laplacian} for the Bessel operator $B_\alpha$.

Now let us prove \eqref{eq:identification}. Leveraging on the density of $\Lip(\overline{\sfM},\dist_\sfM)$ in $W^{1,2}(\overline{\sfM},\dist_\sfM,\mm_\alpha)$ (see \cite{ambrosio2013density}), on Lemma \ref{lem:laguerre-sobolev} and on the fact that $L_\alpha(\ell_k^\alpha) \in W^{1,2}(\overline{\sfM},\dist_\sfM,\mm_\alpha)$, it is sufficient to show that
\[
\int_\sfM \langle \d\ell_k^\alpha,\d f\rangle_{\sfg^{-1}}\de\mm_\alpha = \int_\sfM L_\alpha(\ell_k^\alpha) f\de\mm_\alpha, \qquad \forall f \in \Lip(\overline{\sfM},\dist_\sfM).
\]
Actually, given the product structure of $\ell_k^\alpha$, $\dist_\sfM$, $\mm_\alpha$, and $L_\alpha$, it is sufficient to check the above identity for $n=1$, i.e.\ on $(\sfM^1,\sfg^1)$. Therefore, let us fix $s \in (-1,+\infty)$, $m \in \N$, and $f \in \Lip([0,+\infty),\dist_{\sfM^1})$ and observe that from \eqref{eq:distance} it holds $\dist_{\sfM^1}(u,v) \leq \eps^{-1}\dist_e(u,v)$ for all $u,v \in [\eps^2,+\infty)$. Setting $h_m^s(t) := (\partial_t\ell_m^s)(t)t^{s+1}e^{-t}f(t)$ for $t \geq 0$, this implies that $h_m^s \in \Lip([\eps^2,+\infty)],\dist_e)$ for all $\eps > 0$, since product of a compactly supported $\dist_e$-Lipschitz function with a $C^\infty$ function, and
\[
(\partial_t h_m^s)(t) = (\partial_t\ell_m^s)(t)t^{s+1}e^{-t}(\partial_t f)(t) + \partial_t\big((\partial_t\ell_m^s)(t)t^{s+1}e^{-t}\big)f(t),
\]
so that
\[
(\partial_t\ell_m^s)(t)(\partial_t f)(t)t^{s+1}e^{-t} = (\partial_t h_m^s)(t) + L_s(\ell_m^s)(t)f(t)t^s e^{-t}
\]
in the sense of distributions on $(0,+\infty)$. Thus, recalling from (the proof of) Lemma \ref{lem:laguerre-sobolev} that $|\d\ell_m^s|_{\sfg^{-1}} \in L^2(\sfM^1,\mm_s)$ to justify the first identity, we have
\[
\begin{split}
\int_\sfM \langle \d\ell_m^s,\d f\rangle_{\sfg^{-1}}\de\mm_s & = \lim_{\eps \downarrow 0}\int_{\eps^2}^\infty \partial_t\ell_m^s \partial f t^{s+1}e^{-t}\de t = \int_0^\infty L_s(\ell_m^s)f t^s e^{-t}\de t + \lim_{\eps \downarrow 0} \int_{\eps^2}^\infty \partial_t h_m^s \de t \\
& = \int_{\sfM^1} L_s(\ell_m^s)f\de\mm_s - \lim_{\eps \downarrow 0} h_m^s(\eps^2) \\
& = \int_{\sfM^1} L_s(\ell_m^s)f\de\mm_s - \lim_{\eps \downarrow 0} \eps^{2(s+1)}C(s,m)\ell_{m-1}^{s+1}(\eps^2) e^{-\eps^2}f(\eps^2) \\
& = \int_{\sfM^1} L_s(\ell_m^s)f\de\mm_s,
\end{split}
\]
where in the second to last identity we used the fact that $h_m^s$ is the product of $t^{s+1}$ with a compactly supported continuous function. The proof of \eqref{eq:identification} is thus concluded.
\end{proof}

The identification of the ``metric'' differential with the Riemannian differential \eqref{eq:gradient-differential} and $\cL_\alpha$ with $L_\alpha$ imply that the ``metric'' Riesz transform $\cR_\alpha^a := \d (a + \cL_\alpha)^{-1/2}$ and the Laguerre-Riesz transform $\mathcal{R}_\alpha^a := \delta(a + L_\alpha)^{-1/2}$, as defined in \cite{mauceri2015riesz}, coincide up to musical isomorphisms. Let us then show that $(\overline{\sfM},\dist_\sfM,\mm_\alpha)$ is an $\RCD(K,\infty)$ space, for some $K \in \R$.

\begin{proposition}
Let $\alpha \in [-1/2,+\infty)^n$. Then $(\overline{\sfM},\dist_\sfM,\mm_\alpha)$ is an $\RCD(1/2,\infty)$ space.
\end{proposition}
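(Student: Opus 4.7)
The plan is to reduce the general statement to the one-dimensional case via the tensorization property of $\RCD$, and then to linearize the Laguerre geometry by an explicit isometric change of variable that turns the problem into a weighted half-line of Euclidean type, where Sturm's convexity criterion applies directly — exactly as in the Bessel case of \cref{prop:bessel-rcd}.

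First, by \cite[Theorem 7.6]{ambrosio2015riemannian}, together with the product structure of $\overline{\sfM}$, $\dist_\sfM$ (see \eqref{eq:distance}), and $\mm_\alpha$, it is enough to prove that for every scalar parameter $s \geq -1/2$ the space $([0,\infty),\dist_{\sfM^1},\mm_s)$ is $\RCD(1/2,\infty)$, where $\mm_s = \Gamma(s+1)^{-1} t^{s} e^{-t}\de t$ and $\dist_{\sfM^1}(t,t') = 2|\sqrt{t}-\sqrt{t'}|$. Infinitesimal Hilbertianity of this 1D space is already guaranteed by \cref{prop:sobolev-laguerre}, so only the curvature-dimension part needs attention.

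For the curvature bound, I would introduce the map $\Phi : [0,\infty)\to [0,\infty)$, $\Phi(u) := u^2/4$. A direct check using \eqref{eq:distance} gives $\dist_{\sfM^1}(\Phi(u),\Phi(v))= |u-v|$, so $\Phi$ is an isometry from $([0,\infty),\dist_e)$ onto $([0,\infty),\dist_{\sfM^1})$. Pulling back $\mm_s$ via $\Phi$ (i.e., using $t=u^2/4$, $\de t = \tfrac{u}{2}\de u$) yields, up to a normalizing constant, the measure $e^{-V(u)}\de u$ with
\[
V(u) = \frac{u^2}{4} - (2s+1)\log u + c_s, \qquad u>0,
\]
extended by $+\infty$ on $(-\infty,0)$. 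Since isomorphism of metric measure spaces preserves $\RCD$, the problem reduces to showing that $([0,\infty),\dist_e,e^{-V}\de u)$ is $\RCD(1/2,\infty)$.

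Now on $(0,\infty)$ one computes
\[
V''(u) = \frac{2s+1}{u^2} + \frac{1}{2} \geq \frac{1}{2}
\]
precisely under the hypothesis $s\geq -1/2$; together with the convex domain $[0,\infty)$ and $V\equiv +\infty$ outside it, this means $V$ is lower semicontinuous and $(1/2)$-convex on $\R$. Writing ${\rm Ent}_{e^{-V}\Leb{1}}(\nu) = {\rm Ent}_{\Leb{1}}(\nu)+\int V\de\nu$ and applying \cite[Theorem 1.3]{sturm2005convex} exactly as in the proof of \cref{prop:bessel-rcd}, one obtains $K$-geodesic convexity of the entropy with $K=1/2$, i.e.\ the $\CD(1/2,\infty)$ condition. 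Combined with infinitesimal Hilbertianity this gives $\RCD(1/2,\infty)$, and tensorization concludes the proof.

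The only delicate point is the behavior at $u=0$ when $-1/2\leq s <1/2$: there $V$ has a logarithmic singularity (or, at the endpoint $s=-1/2$, just a boundary), so I would want to be slightly careful in citing Sturm's criterion, checking that the standard extension-by-$+\infty$ argument on the convex subset $[0,\infty)\subset\R$ still produces a $(1/2)$-convex l.s.c.\ potential; this is analogous to, and no harder than, the Bessel situation treated in \cref{prop:bessel-rcd}, so I do not expect any substantive obstacle beyond this bookkeeping.
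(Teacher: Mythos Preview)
Your proposal is correct and essentially identical to the paper's own proof: both reduce to $n=1$ by tensorization, use the change of variable $u\leftrightarrow t=u^2/4$ (equivalently $\phi(t)=2\sqrt{t}$) to pass to $([0,\infty),\dist_e,e^{-V}\de u)$ with $V(u)=\tfrac{u^2}{4}-(2s+1)\log u+c_s$, check $V''\ge 1/2$, and conclude via Sturm's criterion together with the infinitesimal Hilbertianity from \cref{prop:sobolev-laguerre}. The only difference is cosmetic (you phrase the isometry as a pullback rather than a pushforward), and your extra caution about the boundary behaviour at $u=0$ is not addressed in the paper either.
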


\begin{proof}
By the same considerations as in the proof of Proposition \ref{prop:bessel-rcd}, it is not restrictive to consider the case $n=1$. Then consider the map $\phi : [0,+\infty) \to [0,+\infty)$, $\phi(t) := 2\sqrt{t}$, and observe that it is a bijection, $\dist_\sfM(t,t') = \dist_e(\phi(t),\phi(t'))$, and by the change of variable formula it holds
\[
\phi_*\mm_\alpha(\de t) = \frac{1}{2}\frac{t^{2\alpha+1}}{4^\alpha}\frac{e^{-t^2/4}}{\Gamma(\alpha+1)}\de t.
\]
Thus $([0,+\infty),\dist_\sfM,\mm_\alpha)$ and $([0,+\infty),\dist_e,\phi_*\mm_\alpha)$ are isomorphic via $\phi$ as metric measure spaces and by \cite[Proposition 4.12]{Sturm06I} the one is a $\CD(K,\infty)$ if and only if so is the other. In this regard, rewriting $\phi_*\mm_\alpha$ as $e^{-V}\Leb{1}$ with
\[
V = \frac{1}{4}t^2 - (2\alpha+1)\log t + (2\alpha+1)\log 2 + \log\Gamma(\alpha+1)
\]
and recalling that $\alpha \geq -1/2$, we can observe that $V'' \geq 1/2$ and $([0,+\infty),\dist_e,\Leb{1})$ is flat, so that by \cite[Theorem 1.3]{sturm2005convex} we deduce that $([0,+\infty),\dist_\sfM,\mm_\alpha)$ is a $\CD(1/2,\infty)$ space. Since by Proposition \ref{prop:sobolev-laguerre} it is also infinitesimally Hilbertian, the conclusion follows.
\end{proof}

\begin{remark} 
The restriction $\alpha \in [-1/2,+\infty)^n$, consistent with the literature, is not merely technical. Indeed, if $\alpha_j < -1/2$ for some $j=1,...,n$, then $(\overline{\sfM},\dist_\sfM,\mm_\alpha)$ cannot be an $\RCD(K,\infty)$ space, for any $K \in \R$. This can be seen, for instance, by computing explicitly the generalized Ricci tensor in $(\sfM^1,\sfg^1,\mm_\alpha^1)$:
\[
{\rm Ric}_{(\sfM^1,\sfg^1,\mm_\alpha^1)} = \frac12 \sfg^1 + \frac{\alpha+1/2}{2t^2}\de t \otimes \de t.
\]
If $\alpha < -1/2$, it is easily seen that ${\rm Ric}_{(\sfM^1,\sfg^1,\mm_\alpha^1)}$ cannot be lower bounded.
\end{remark}

As a consequence of Theorem \ref{thm:main} and the discussion carried out in the current section, we obtain the following

\begin{corollary}\label{cor:main-laguerre}
Let $\alpha \in [-1/2,+\infty)^n$. Let $L_\alpha$ denote the self-adjoint extension of \eqref{eq:laguerre-operator} given by the spectral resolution \eqref{eq:laguerre-operator2}. Then, for every $p \in (1,\infty)$ and $f \in W^{1,2}(\overline{\sfM},\dist_\sfM,\mm_\alpha)$ it holds
\[
c_p^{-1}\nor{\sqrt{-\Delta} f}_p \leq \nor{\d f}_p \leq c_p \nor{\sqrt{-\Delta} f}_p,
\]
where $c_p = 16\max\{p,p/(p-1)\}$.
\end{corollary}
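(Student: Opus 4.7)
The plan is to apply Theorem \ref{thm:main} directly, invoking the identifications painstakingly assembled earlier in this subsection. By the previous proposition, the metric measure space $(\overline{\sfM}, \dist_\sfM, \mm_\alpha)$ is $\RCD(1/2, \infty)$, so with $K = 1/2 > 0$ one has $K_- = \max\{0, -K\} = 0$. Hence both inequalities of Theorem \ref{thm:main} simplify: the first reads $\|\d f\|_p \leq c_p \|\sqrt{-\Delta_\alpha} f\|_p$, and the second reduces (the $\sqrt{K_-}\|f\|_p$ term vanishing) to $\|\sqrt{-\Delta_\alpha} f\|_p \leq c_p \|\d f\|_p$. Combining these gives the two-sided bound stated.

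It remains only to translate objects between the $\RCD$ language and the Laguerre language. By Proposition \ref{prop:laguerre-laplacian} the metric infinitesimal generator $\Delta_\alpha$ coincides with $-L_\alpha$ as self-adjoint operators on $L^2(\sfM, \mm_\alpha)$; in particular, by functional calculus, $\sqrt{-\Delta_\alpha} = \sqrt{L_\alpha}$ on their common domain. On the other hand, Proposition \ref{prop:sobolev-laguerre} identifies the metric cotangent module with the Riemannian one associated to $\sfg$ and its pointwise norm with $|\cdot|_{\sfg^{-1}}$, which by \eqref{eq:metric-partial_deriv} equals $(\sum_j |\delta_j f|^2)^{1/2}$. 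Thus $\|\d f\|_p$ in Theorem \ref{thm:main} coincides with the $L^p$-norm of the Laguerre gradient used in the Laguerre-Riesz transform literature.

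Strictly speaking, there is nothing substantial left to prove: the entire content of the corollary is the combination of Theorem \ref{thm:main} with the $\RCD(1/2,\infty)$ structure of $(\overline{\sfM}, \dist_\sfM, \mm_\alpha)$ established above, together with the operator identifications. The potentially delicate point — checking that the class of test functions and the Sobolev space on which the statement is naturally proved in the metric framework matches the class on which Laguerre-Riesz bounds are usually phrased — was already addressed: $W^{1,2}(\overline{\sfM}, \dist_\sfM, \mm_\alpha) \subseteq \mathbb{W}^{1,2}(\sfM, \mm_\alpha)$, and Lemma \ref{lem:laguerre-sobolev} shows that the normalized Laguerre polynomials lie in the metric Sobolev space, so that $D(L_\alpha) \subseteq W^{1,2}(\overline{\sfM}, \dist_\sfM, \mm_\alpha)$ through the spectral decomposition. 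Hence Theorem \ref{thm:main} applies verbatim and concludes the proof.
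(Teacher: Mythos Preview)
Your proof is correct and follows exactly the approach the paper intends: the paper presents this corollary without an explicit proof, stating only that it is ``a consequence of Theorem~\ref{thm:main} and the discussion carried out in the current section,'' and your argument is precisely the natural unpacking of that sentence. The identifications you invoke (the $\RCD(1/2,\infty)$ structure, $\cL_\alpha = L_\alpha$, and the metric differential coinciding with the Riemannian one) are the ones the paper establishes in the preceding propositions, and your observation that $K_- = 0$ eliminates the extra term is exactly what is needed.
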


\begin{remark}
As already mentioned, the $\RCD$ condition is stable with respect to tensorization. Hence, even if we only discussed Bessel and Laguerre operators (since particularly relevant in the literature), more complicated operators can be built by taking products of them.
\end{remark}


\subsection{Log-concave measures}\label{sec:log-concave}

It is well known that any Hilbert space, when endowed with a non-degenerate log-concave probability measure, becomes an $\RCD(0,\infty)$ spaces. For the reader's convenience, let us recall some terminology and key facts. Let $(H, \langle\cdot,\cdot\rangle)$ be a separable Hilbert space and let $\gamma$ be a log-concave probability measure on $H$, i.e.,
\[ \gamma( (1-t)B +tC ) \ge \gamma(B)^{1-t} \gamma(C)^t, \quad \text{for every $B, C \subseteq H$ open and $t \in (0,1)$.}\]
We also assume that $\gamma$ is non-degenerate, i.e., it is not concentrated on a proper closed subspace of $H$. Under these assumptions, it is argued in \cite{ambrosio2009existence} that the quadratic form, initially defined on Fr\'echet-differentiable functions,
\begin{equation}\label{eq:form-log-concave} 
C^1_\sfb(H) \ni f \mapsto  E_\gamma(f) = \int_{H} \abs{\nabla f}^2 \de \gamma
\end{equation}
is closable in $L^2(H, \gamma)$, i.e.\ it can be extended to a Dirichlet form.  Actually, its closure is the Cheeger energy associated to the metric measure space $(H, \abs{\cdot}, \gamma)$, with domain given by the Sobolev space $W^{1,2}(H, \gamma)$.  Moreover, in \cite{ambrosio2009existence} it is also proved that the associated heat semigroup $\sfP$ is linear and satisfies the so called $\mathsf{EVI}$ property, which in turn in \cite{AmbrosioGigliSavare11-2} is shown to be an equivalent characterization of $\RCD$ spaces (in particular, one obtains that the Ricci curvature is non-negative).

Therefore, in this setting our main result \cref{thm:main} yields the following.

\begin{corollary}\label{cor:main-hilbert}
Let $(H, \langle\cdot, \cdot\rangle)$ be a separable Hilbert space and let $\gamma$ be a non-degenerate log-concave probability measure on $H$. Let $\Delta_\gamma$ denote the Laplacian associated to the closure of the form \eqref{eq:form-log-concave} Then, for every $p \in (1, \infty)$, it holds for $f \in W^{1,2}(H, \gamma)$,
\[
c_p^{-1}\nor{\sqrt{-\Delta} f}_p \leq \nor{\d f}_p \leq c_p \nor{\sqrt{-\Delta} f}_p,
\]
where $c_p = 16\max\{p,p/(p-1)\}$.
\end{corollary}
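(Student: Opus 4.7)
The plan is to reduce the claim to a direct application of Theorem \ref{thm:main} by verifying that $(H, |\cdot|, \gamma)$ fits into the $\RCD(0, \infty)$ framework, and that the Laplacian $\Delta_\gamma$ built from the closure of the form \eqref{eq:form-log-concave} coincides with the metric Laplacian $\Delta$ associated to the Cheeger energy on this space.

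First, I would invoke the results of Ambrosio-Savar\'e-Zambotti \cite{ambrosio2009existence} to obtain that the quadratic form $E_\gamma$ defined on $C^1_\sfb(H)$ is closable in $L^2(H,\gamma)$, and that its closure coincides with the Cheeger energy $E$ built from the metric measure structure $(H, |\cdot|, \gamma)$; in particular $W^{1,2}(H, |\cdot|, \gamma)$ coincides with the domain of the closure of $E_\gamma$, and the pointwise norm $|\d f|$ of the metric differential agrees $\gamma$-a.e.\ with $|\nabla f|$ for $f \in C^1_\sfb(H)$ (and then by density for all $f \in W^{1,2}(H,\gamma)$). This in particular yields that $(H, |\cdot|, \gamma)$ is infinitesimally Hilbertian and, by the characterization of $\mathsf{EVI}$-gradient flows given in \cite{AmbrosioGigliSavare11-2}, the triple $(H, |\cdot|, \gamma)$ is an $\RCD(0, \infty)$ space. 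In particular the self-adjoint operator associated with the closure of $E_\gamma$ is exactly the metric Laplacian $\Delta = \Delta_\gamma$.

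With these identifications at hand, I would apply \cref{thm:main} with $K = 0$, so that $K_- = 0$. The first inequality of the theorem reads $\|\d f\|_p \le c_p \|\sqrt{-\Delta_\gamma} f\|_p$, while the second reduces (since $\sqrt{K_-} = 0$) to $\|\sqrt{-\Delta_\gamma} f\|_p \le c_p \|\d f\|_p$, with $c_p = 16 \max\{p, p/(p-1)\}$. Combining them yields the two-sided bound stated in the corollary.

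The only genuinely non-routine ingredient is the identification of the Cheeger energy on $(H,|\cdot|,\gamma)$ with the closure of $E_\gamma$: this is where log-concavity of $\gamma$ is essential and where one relies on \cite{ambrosio2009existence}. Once this is in place (together with the consequent $\RCD(0,\infty)$ property via $\mathsf{EVI}$), no additional work specific to the Hilbert setting is needed, since all computations in the proof of \cref{thm:main} are carried out at the abstract $\RCD(K,\infty)$ level; in particular the infinite-dimensional nature of $H$ plays no role, consistently with the dimension-free character of our estimates.
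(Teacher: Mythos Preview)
Your proposal is correct and follows essentially the same approach as the paper: the paper's Section~\ref{sec:log-concave} presents the corollary as an immediate consequence of \cref{thm:main} with $K=0$, after recalling (from \cite{ambrosio2009existence} and \cite{AmbrosioGigliSavare11-2}) that the closure of $E_\gamma$ is the Cheeger energy and that $(H,|\cdot|,\gamma)$ is $\RCD(0,\infty)$ via the $\mathsf{EVI}$ characterization. Your write-up is in fact slightly more explicit than the paper's, which simply states these facts in the paragraph preceding the corollary and then says ``our main result \cref{thm:main} yields the following.''
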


\section*{Declarations}

\subsection*{Author contributions} All authors made equal contributions to this work.

\subsection*{Acknowledgements}   
DT acknowledges support by the HPC Italian National Centre for HPC, Big Data and Quantum Computing - Proposal code CN1 CN00000013, CUP I53C2200 0690001, and by the INdAM-GNAMPA project 2023 ``Teoremi Limite per Dinamiche di Discesa Gradiente Stocastica: Convergenza e Generalizzazione''.

\subsection*{Competing interests} The authors have no competing interests to declare that are relevant to the content of this article.

\subsection*{Data Availability Statement} The sharing of data is not applicable to this article since no datasets were generated or analyzed during the current study.

\printbibliography

\end{document}